\documentclass[11pt,twoside, reqno]{article}

\usepackage{amssymb}
\usepackage{amsmath}
\usepackage{mathrsfs}
\usepackage{amsthm}
\usepackage{txfonts}

\usepackage[colorlinks=true, pdfstartview=FitV, linkcolor=blue, citecolor=blue, urlcolor=blue]{hyperref}

\pagestyle{myheadings}\markboth{\footnotesize\rm\sc Xiong Liu and Wenhua Wang}
{\footnotesize\rm\sc Schr\"{o}dinger Groups of Fractional Powers and Imaginary Power Operators on Weak Hardy Spaces}

\allowdisplaybreaks

\textwidth=15cm
\textheight=21.35cm
\oddsidemargin 0.35cm
\evensidemargin 0.35cm

\parindent=13pt

\def\rr{{\mathbb R}}
\def\rn{{{\rr}^n}}
\def\zz{{\mathbb Z}}

\def\nn{{\mathbb N}}
\def\fz{\infty}

\def\cs{{\mathcal S}}

\renewcommand\tilde{\widetilde}

\def\supp{{\rm{\,supp\,}}}

\def\ls{\lesssim}

\def\hs{\hspace{0.3cm}}

\def\r{\right}
\def\lf{\left}

\def\bint{{\ifinner\rlap{\bf\kern.30em--}
\int\else\rlap{\bf\kern.35em--}\int\fi}\ignorespaces}

\def\sbint{{\ifinner\rlap{\bf\kern.32em--}
\hspace{0.078cm}\int\else\rlap{\bf\kern.45em--}\int\fi}\ignorespaces}

\newtheorem{theorem}{Theorem}[section]
\newtheorem{lemma}[theorem]{Lemma}
\newtheorem{corollary}[theorem]{Corollary}

\theoremstyle{definition}
\newtheorem{remark}[theorem]{Remark}
\newtheorem{definition}[theorem]{Definition}
\newtheorem{assumption}[theorem]{Assumption}
\numberwithin{equation}{section}

\numberwithin{equation}{section}

%%%%%%%%%%%%%%%%%%%%%%%%%%%%%%%%%%%%%%%%%%%%%%%%%%%%%%%%%%%%%%%%%%%%%
%%%%%%%%%%%%%%%%%%%%%%%%%%%%%%%%%%%%%%%%%%%%%%%%%%%%%%%%%%%%%%%%%%%%%%
%\contents

%\tableofcontents\newpage

%\newtheorem{theorem}{Theorem}[chapter]
%\newtheorem{lemma}[theorem]{Lemma}

\numberwithin{equation}{section}

\begin{document}

\arraycolsep=1pt

\title{\Large\bf Estimates for Schr\"{o}dinger Groups and Imaginary Power Operators on Weak
Hardy Spaces Associated with Non-negative Self-adjoint Operators and Ball Quasi-Banach Function Spaces
\footnotetext{\hspace{-0.35cm}
{\it 2020 Mathematics Subject Classification}.
{Primary 42B35; Secondary 42B30, 42B25.}
\endgraf{\it Key words and phrases.} Hardy space, ball quasi-Banach function space, self-adjoint operator, Schr\"{o}dinger group, imaginary power operator.
%\endgraf This work is supported by the Youth Science Research Fund of Lanzhou
%Jiaotong University (Grant No. 2023022), the Gansu Province Education Science and Technology Innovation Project (Grant No. 224040),
%the Lanzhou Jiaotong University tianyou youth lift talent plan (Grant No. 1520260517),
%the Foundation for Innovative Fundamental Research Group Project of Gansu Province (Grant No. 25JRRA805),
%and China Postdoctoral Science Foundation (No. 2024M754159).
%\endgraf $^\ast$Corresponding author
}}
\author{Xiong Liu and Wenhua Wang}
\date{  }
\maketitle

\vspace{-0.8cm}

\begin{center}
\begin{minipage}{13cm}\small
{\noindent{\bf Abstract.}
Let $(\mathbb{X},d,\mu)$ be a doubling metric measure space, $L$ a non-negative self-adjoint operator on $L^2(\mathbb{X})$
satisfying the Davies-Gaffney estimate, and $X(\mathbb{X})$ a ball quasi-Banach function space on $\mathbb{X}$ satisfying some mild assumptions
with $p\in(0,\infty)$ and $s_0\in(0,\min\{p,1\}]$.
In this article, the authors study the weak Hardy space $WH_{X,L}(\mathbb{X})$ associated with $L$ and $X(\mathbb{X})$,
and then give the atomic and molecular decompositions of $WH_{X,L}(\mathbb{X})$.
As applications, the authors establish the boundedness estimate of Schr\"{o}dinger groups for fractional powers of $L$ on $WH_{X,L}(\mathbb{X})$:
$$\left\|(I+L)^{-\beta/2}e^{i\tau L^{\gamma/2}}f\right\|_{WH_{X,L}(\mathbb{X})}\leq C\left(1+|\tau|\right)^{n(\frac{1}{s_0}-\frac{r}{2})}\|f\|_{WH_{X,L}(\mathbb{X})},$$
where $0<\gamma\neq1$, $\beta\in[\gamma n(\frac{1}{s_0}-\frac{1}{2}),\infty)$,
$r\in(0,1]$, $\tau\in \mathbb{R}$, and $C>0$ is a constant.
Moreover, when $(\mathbb{X},d,\mu)$ is an Ahlfors $n$-regular metric measure space and $L$ satisfies the Gaussian upper bound estimate,
the authors also obtain the boundedness estimate of imaginary power operators of $L$ on $WH_{X,L}(\mathbb{X})$:
$$\left\|L^{i\tau}f\right\|_{WH_{X,L}(\mathbb{X})}\leq C\left(1+|\tau|\right)^{n(\frac{1}{s_0}-\frac{r}{2})}\|f\|_{WH_{X,L}(\mathbb{X})},$$
where $\alpha>n(\frac{1}{s_0}-\frac{1}{2})$, $r\in(\frac{n/s_0}{\alpha+n/2},1]$, $\tau\in \mathbb{R}$, and $C>0$ is a constant. These results are also novelty for strong Hardy spaces $H_{X,L}(\mathbb{X})$. Moreover,
all these results have a wide range of generality and, particularly, even when they are applied to weighted Lebesgue spaces, mixed-norm Lebesgue spaces, Orlicz spaces, variable Lebesgue spaces and Euclidean spaces setting, these results are also new.}
\end{minipage}
\end{center}

\tableofcontents

%\tableofcontents\newpage

\section{Introduction}\label{s1}
\subsection{Background and Motivation}
\hskip\parindent
Consider the fractional Laplace operator $(-\Delta)^{\gamma/2}$ with $\gamma>0$ on the Euclidean space $\rn$ and the
Schr\"{o}dinger equation
$$\lf\{ \begin{array}{ll}
i\partial_\tau u+(-\Delta)^{\gamma/2}u=0,\\
u(x,0)=f(x).
\end{array}\r.$$
Its any solution $u$ satisfies
$$\|u(\cdot,\tau)\|_{L^p(\rn)}=\lf\|e^{i\tau(-\Delta)^{\gamma/2}}f\r\|_{L^p(\rn)}\lesssim
(1+|\tau|)^{n|\frac1{2}-\frac1{p}|}\|f\|_{W^{s,p}(\rn)},$$
where $\tau\in \mathbb{R}$, $0<\gamma\neq1$, $p\in(1,\infty)$ and $\beta=\gamma n|\frac1{2}-\frac1{p}|$.
It is well-known that the Schr\"{o}dinger flow $e^{i\tau(-\Delta)^{\gamma/2}}$ with $\gamma>0$ is a group of isometries on
$L^2(\rn)$ but is unbounded on every other $L^p$ space with $p\neq2$. For $p\neq2$, it was shown
that for $\beta=\gamma n|\frac1{2}-\frac1{p}|$, the operator
$(I-\Delta)^{-\beta/2}e^{i\tau(-\Delta)^{\gamma/2}}$ is bounded on $H^p(\rn)$ with $p\in(0,\infty)$ (see, for instance, \cite{bdd24}),
where $H^p(\rn)$ is the classical Hardy space and $H^p(\rn)=L^p(\rn)$ if $p\in(1,\infty)$ (see, for instance, \cite{fs72}).

Let $L$ be a non-negative self-adjoint operator
satisfying the Davies-Gaffney estimate on spaces of homogeneous type space.
Since the flows $e^{i\tau L^{\gamma/2}}$ are well defined for arbitrary nonnegative self-adjoint operators via spectral calculus, Bui et al. \cite{bdd24} obtained the sharp estimate for $e^{i\tau L^{\gamma/2}}$
on Hardy space $H^p_{L}(\mathbb{X})$ associated with $L$,
and showed that for any $\tau\in \mathbb{R}$ and $p\in(0,1]$,
\begin{align}\label{e1.1}
\lf\|(I+L)^{-\beta/2}e^{i\tau L^{\gamma/2}}f\r\|_{H^p_{L}(\mathbb{X})}\lesssim\lf(1+|\tau|\r)^{n|\frac1{2}-\frac1{p}|}\|f\|_{H^p_{L}(\mathbb{X})},
\end{align}
where $0<\gamma\neq1$ and $\beta\in[\gamma n|\frac1{2}-\frac1{p}|,\infty)$. In the special case of the Hermite operator $L$ on $\rn$ with $n\geq2$,
the sharp estimate \eqref{e1.1} on Lebesgue space $L^p(\rn)$ with $p\in(1,\infty)$ and Hardy space $H^p_{L}(\rn)$ with $p\in(0,1]$ associated with $L$ was obtained in \cite{bdhh23}.
For more information about Schr\"{o}dinger flows beyond the Laplacian case, we refer the
reader to \cite{bddm19,bdn20,cdlsy23,cdly20,cdly23}.

Moreover, the study for the real-variable theory of (weak) Hardy type spaces
associated with the ball quasi-Banach function space $X$ has inspired great interests in recent years
(see, for instance, \cite{dlyyz23,lyyy25,shyy17,wyy20,wyyz21,yhyy22,yhyy23,zyyw21}).
In particular, when $L$ is a non-negative self-adjoint operator on $L^2(\rn)$
with its heat kernels satisfying the Gaussian upper bound estimate, Lin et al. \cite{lyyy24}
introduced the Hardy space $H_{X,L}(\rn)$ associated with $L$ and $X$, and then established the radial and the
non-tangential maximal function characterizations of $H_{X,L}(\rn)$.
Meanwhile, Lin et al. \cite{lyyy26} introduced the Hardy space $H_{X,L}(\mathbb{X})$,
associated with $X$ and an operator $L$ whose heat kernel satisfies the Davies-Gaffney estimate, on spaces of homogeneous type
$\mathbb{X}$, and then several characterizations of $H_{X,L}(\mathbb{X})$,
in terms of the atomic and molecular characterizations, the
boundedness of spectral multiplies, and the boundedness estimate of Schr\"{o}dinger groups
when the operator $L$ further satisfies the Gaussian upper bound estimate, are obtained.
Compared with quasi Banach function spaces, ball quasi Banach function spaces contain more function spaces and
hence are more general (see, for instance, \cite{lyyy24,lyyy26,shyy17}).

Let $(\mathbb{X},d,\mu)$ be a doubling metric measure space, $L$ a non-negative self-adjoint operator on $L^2(\mathbb{X})$
satisfying the Davies-Gaffney estimate, and $X(\mathbb{X})$ a ball quasi-Banach function space on $\mathbb{X}$ satisfying some mild assumptions with $p\in(0,\infty)$ and $s_0\in(0,\min\{p,1\}]$. Motivated by \cite{bbhh22,bdd24,bdhh23,lyyy24,lyyy26},
in this article, we first introduce the weak Hardy space $WH_{X,L}(\mathbb{X})$ via the Lusin area function associated with $L$,
and then establish the atomic and the molecular characterizations of $WH_{X,L}(\mathbb{X})$.
As applications, we establish the boundedness estimate of Schr\"{o}dinger groups for fractional powers of $L$ on $WH_{X,L}(\mathbb{X})$:
 $$\lf\|(I+L)^{-\beta/2}e^{i\tau L^{\gamma/2}}f\r\|_{WH_{X,L}(\mathbb{X})}\leq C\lf(1+|\tau|\r)^{n(\frac{1}{s_0}-\frac{r}{2})}\|f\|_{WH_{X,L}(\mathbb{X})},$$
where $0<\gamma\neq1$, $\beta\in[\gamma n(\frac{1}{s_0}-\frac{1}{2}),\infty)$,
$r\in(0,1]$, $\tau\in \mathbb{R}$, and $C>0$ is a constant.
Moreover, when $(\mathbb{X},d,\mu)$ is an Ahlfors $n$-regular metric measure space, $L$ satisfies the Gaussian upper bound estimate, and the kernels of the spectral multiplier operators $F(L)$ satisfy an appropriate weighted $L^2$ estimate,
we obtain the boundedness estimate of imaginary power operators of $L$ on $WH_{X,L}(\mathbb{X})$:
$$\lf\|L^{i\tau}f\r\|_{WH_{X,L}(\mathbb{X})}\leq C\lf(1+|\tau|\r)^{n(\frac{1}{s_0}-\frac{r}{2})}\|f\|_{WH_{X,L}(\mathbb{X})},$$
where $\alpha>n(\frac{1}{s_0}-\frac{1}{2})$, $r\in(\frac{n/s_0}{\alpha+n/2},1]$, $\tau\in \mathbb{R}$, and $C>0$ is a constant.
Moreover, these results also novelty for strong Hardy spaces $H_{X,L}(\mathbb{X})$.
All these results have a wide range of generality and, particularly, even when they are applied to weighted Lebesgue spaces, mixed-norm Lebesgue spaces, Orlicz spaces, variable Lebesgue spaces and Euclidean spaces setting, these results are also new.

\subsection{Main Results and Organization}
\hskip\parindent
In this subsection, we will state the main
results and  the organization of this paper.
To state the main results of this article, now we describe the concept of spaces of homogeneous type initially introduced by
Coifman and Weiss \cite{cw77}. Let $(\mathbb{X},d,\mu)$ be a metric measure space endowed with a distance $d$ and a non-negative Borel measure $\mu$
satisfy the following doubling condition: there exists a constant $C_1\in(0,\infty)$ such that, for any $x\in\mathbb{X}$ and $r\in(0,\infty)$,
\begin{equation}\label{e1.2}
V(x,2r)\leq C_1V(x,r)<\infty,
\end{equation}
where $V(x,r):=\mu(B(x,r))$ and $B(x,r):=\{y\in\mathbb{X}:d(x,y)<r\}$. Moreover, notice that the doubling property \eqref{e1.2} implies that
the following strong homogeneity property that, for some positive constants $C_2$ and $n$,
\begin{equation}\label{e1.3}
V(x,\lambda r)\leq C_2\lambda^n V(x,r)
\end{equation}
uniformly holds true for any $\lambda\in[1,\infty)$, $x\in\mathbb{X}$ and $r\in(0,\infty)$. Further implies that there exist constants
$C_3\in(0,\infty)$ and $n_0\in[0,n]$ such that, for any $x,y\in\mathbb{X}$ and $r\in(0,\infty)$,
$$V(x,r)\leq C_3\lf[1+\frac{d(x,y)}{r}\r]^{n_0}V(y,r).$$

The symbol $\mathfrak{U}(\mathbb{X})$ denotes the set of all $\mu$-measurable functions on $\mathbb{X}$.
Next, we recall the concept of ball (quasi-)Banach function spaces on $\mathbb{X}$ (see \cite[Definition 2.2 and (2.3)]{shyy17} for the case of $\rn$).
\begin{definition}\label{de1.1}
A quasi-normed linear space $X(\mathbb{X})\subset \mathfrak{U}(\mathbb{X})$, equipped with a {\it quasi-norm}
$\|\cdot\|_{X(\mathbb{X})}$ which makes sense for all functions in $\mathfrak{U}(\mathbb{X})$, is called a {\it ball quasi-Banach function space} (for short, BQBF space) on $\mathbb{X}$ if it satisfies the following conditions:
\begin{enumerate}
\item[\rm{(i)}]for any $f\in \mathfrak{U}(\mathbb{X})$, $\|f\|_{X(\mathbb{X})}=0$ if and only if $f=0$ $\mu$-almost everywhere;
\item[\rm{(ii)}] for any $f,g\in \mathfrak{U}(\mathbb{X})$, $|g|\leq|f|$ $\mu$-almost everywhere implies that
$\|g\|_{X(\mathbb{X})}\leq\|f\|_{X(\mathbb{X})}$;
\item[\rm{(iii)}]for any $\{f_k\}_{k\in\nn}\subset \mathfrak{U}(\mathbb{X})$ and $f\in\mathfrak{U}(\mathbb{X})$,
$0\leq f_k\uparrow f$ $\mu$-almost everywhere as $k\rightarrow\infty$ implies that $\|f_k\|_{X(\mathbb{X})}\uparrow\|f\|_{X(\mathbb{X})}$ as $k\rightarrow\infty$;
\item[\rm{(iv)}] for any ball $B\subset\mathbb{X}$, $\mathbf{1}_{B}\in X(\mathbb{X})$,
where $\mathbf{1}_{B}$ is defined as the \emph{characteristic function} of $B$.
\end{enumerate}
A quasi-normed linear space $X(\mathbb{X})\subset \mathfrak{U}(\mathbb{X})$ is called a {\it ball Banach function space} (for short, BBF space) on $\mathbb{X}$ if it satisfies (i)-(iv) and
\begin{enumerate}
\item[\rm{(v)}] for any ball $B$ of $\mathbb{X}$, there exists a positive constant $C$, depending only on $B$, such that, for any $f\in X(\mathbb{X})$, $$\int_B|f(x)|d\mu(x)\leq C\|f\|_{X(\mathbb{X})}.$$
\end{enumerate}
\end{definition}

The {\it associate space} $X'(\mathbb{X})$ of a BBF space $X(\mathbb{X})$ is defined by setting
$$X'(\mathbb{X}):=\{f\in\mathfrak{U}(\mathbb{X}):\|f\|_{X'(\mathbb{X})}<\infty\},$$
where, for any $f\in\mathfrak{U}(\mathbb{X})$,
$$\|f\|_{X'(\mathbb{X})}:=\sup\lf\{\|fg\|_{L^1(\mathbb{X})}:g\in X(\mathbb{X}),\|g\|_{X(\mathbb{X})}=1\r\}$$
(see, for instance, \cite[Definition 2.1]{bs88}).

The following concept of the convexification of a BQBF space can be found in
\cite[Definition 2.6]{shyy17}.
\begin{definition}
 Let $p\in(0,\infty)$ and $X(\mathbb{X})$ be a BQBF space. The {\it $p$-convexification}
$X^p(\mathbb{X})$ of $X(\mathbb{X})$ is defined by setting
$$X^p(\mathbb{X}):=\lf\{f\in\mathfrak{U}(\mathbb{X}): \|f\|_{X^p(\mathbb{X})}:=
\lf\||f|^p\r\|^{\frac1{p}}_{X(\mathbb{X})}<\infty \r\}.$$
\end{definition}
Recall that the {\it Hardy--Littlewood maximal operator} $\mathcal{M}$ on $\mathbb{X}$ is defined by setting, for any
$f\in\mathfrak{U}(\mathbb{X})$ and $x\in\mathbb{X}$,
$$\mathcal{M}(f)(x):=\sup_{B\ni x}\frac1{\mu(B)}\int_B|f(y)|\,d\mu(y),$$
where the supremum is taken over all the balls $B$ containing $x$.

Moreover, we also need the following two key assumptions on BQBF spaces.
\begin{assumption}\label{as1.1}
Let $X(\mathbb{X})$ be a BQBF space. Assume that there exists a positive constant $p$ such
that, for any given $t\in(0,p)$ and $u\in(1,\infty)$. Then there exists a positive constant $C$ such that, for any
$\{f_j\}_{j\in\nn}\subset\mathfrak{U}(\mathbb{X})$,
$$\lf\|\lf\{\sum_{j\in\nn}\lf[\mathcal{M}\lf(f_j\r)\r]^u\r\}^{\frac1{u}}\r\|_{X^{\frac1{t}}(\mathbb{X})}
\leq C\lf\|\lf(\sum_{j\in\nn}\lf|f_j\r|^u\r)^{\frac1{u}}\r\|_{X^{\frac1{t}}(\mathbb{X})}.$$
\end{assumption}

\begin{assumption}\label{as1.2}
Let $X(\mathbb{X})$ be a BQBF space. Assume that there exist constants $s_0\in(0,\infty)$ and $q_0\in(s_0,\infty)$ such
that $X^{\frac1{s_0}}(\mathbb{X})$ is a BBF space and the Hardy--Littlewood maximal operator $\mathcal{M}$ is
bounded on the $\frac1{(q_0/s_0)'}$-convexification of the associate space $(X^{\frac1{s_0}})'(\mathbb{X})$,
where $\frac1{(q_0/s_0)'}+\frac1{q_0/s_0}=1$.
\end{assumption}

Let $L$ be a non-negative self-adjoint operator on $L^2(\mathbb{X})$. In this article, we always make the
following assumption on the operator $L$.
\begin{assumption}\label{as1.3}
The semigroup $\{e^{-tL}\}_{t\in(0,\infty)}$, generated by $-L$, satisfies the {\it Davies-Gaffney
estimate}, that is, there exist positive constants $C$ and $c$ such that, for any closed sets $E,F\subset\mathbb{X}$
and any $f\in L^2(\mathbb{X})$ with  $\supp(f)\subset E$,
\begin{align}\label{e1.4}
\lf\|e^{-tL}(f)\r\|_{L^2(F)}\leq C\exp\lf\{-\frac{[\mathrm{dist}(E,F)]^2}{ct}\r\}\|f\|_{L^2(E)},
\end{align}
where $\mathrm{dist}(E,F):=\inf\{d(x,y):\ x\in E,\,y\in F\}$.
\end{assumption}

Let the weak Hardy spaces $WH_{X,L}(\mathbb{X})$, $WH^M_{X,L,\mathrm{atom}}(\mathbb{X})$, and $WH^{M,\varepsilon}_{X,L,\mathrm{mol}}(\mathbb{X})$ associated with $L$ and $X$
be as in Definitions \ref{de2.2}, \ref{de2.4}, and \ref{de2.3}, respectively. Then we have the following atomic and molecular characterizations of $WH_{X,L}(\mathbb{X})$.
\begin{theorem}\label{th1.1}
Let $X(\mathbb{X})$ be a ${\rm{BQBF}}$ space satisfying both Assumptions \ref{as1.1} and \ref{as1.2} for some
$p\in(0,\infty)$, $s_0\in(0,\min\{p,1\}]$, and $q_0\in(s_0,2]$.
Assume that $L$ is a non-negative self-adjoint operator on $L^2(\mathbb{X})$
satisfying the Davies-Gaffney estimate \eqref{e1.4}. Let $M\in(\frac{n}{2}[\frac{1}{s_0}-\frac{1}{2}],\infty)\cap\nn$
and $\varepsilon\in(\frac{n}{s_0},\infty)$.
Then we have $$WH_{X,L}(\mathbb{X})=WH^M_{X,L,\mathrm{atom}}(\mathbb{X})=WH^{M,\varepsilon}_{X,L,\mathrm{mol}}(\mathbb{X})$$
with equivalent quasi-norms.
\end{theorem}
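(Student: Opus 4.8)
The plan is to establish the two asserted identities through the cycle of bounded inclusions
$$WH_{X,L}(\mathbb{X}) \hookrightarrow WH^M_{X,L,\mathrm{atom}}(\mathbb{X}) \hookrightarrow WH^{M,\varepsilon}_{X,L,\mathrm{mol}}(\mathbb{X}) \hookrightarrow WH_{X,L}(\mathbb{X}),$$
each step carrying a quasi-norm bound that depends only on the stated parameters. The middle inclusion is the soft one: once the molecular structure is unwound, every $(X,M)$-atom associated with a ball $B$ is, up to a harmless constant multiple, an $(X,M,\varepsilon)$-molecule associated with $B$ for the given range $\varepsilon\in(\frac{n}{s_0},\infty)$, because an atom has all the annular decay trivially. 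So the real content is in the first and last inclusions.

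For the first inclusion, fix $f\in WH_{X,L}(\mathbb{X})\cap L^2(\mathbb{X})$ (a subclass dense enough that the general case follows by a standard limiting argument at the end), and for each $k\in\mathbb{Z}$ set $O_k:=\{x\in\mathbb{X}:S_L(f)(x)>2^k\}$, an open set whose characteristic function is controlled in $X(\mathbb{X})$ by $\|f\|_{WH_{X,L}(\mathbb{X})}$. Starting from the Calderón reproducing formula
$$f = c_M \int_0^\infty (t^2 L)^{M+1} e^{-t^2 L} f \,\frac{dt}{t}$$
(valid in $L^2(\mathbb{X})$), I would split the integration over the tent regions $\widehat{O_k}\setminus\widehat{O_{k+1}}$ to write $f=\sum_{k\in\mathbb{Z}}g_k$, and then apply a Whitney decomposition of each $O_k$ into balls $\{B_{k,i}\}_i$ with bounded overlap, $\sum_i\mathbf{1}_{B_{k,i}}\lesssim\mathbf{1}_{O_k}$, to refine this into $g_k=\sum_i\lambda_{k,i}\alpha_{k,i}$. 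Here each $\alpha_{k,i}$ is (a constant multiple of) an $(X,M)$-atom adapted to a fixed dilate of $B_{k,i}$ — the $L^{-M}$-cancellation is supplied by the factor $(t^2L)^{M+1}$ and the support localization by the Whitney geometry — and $\lambda_{k,i}\approx 2^k\|\mathbf{1}_{B_{k,i}}\|_{X(\mathbb{X})}$. The $L^2$-convergence and the size bounds for the $g_k$ come from the Davies–Gaffney estimate \eqref{e1.4} and the $L^2$-boundedness of $S_L$ via a tent-space argument in the spirit of \cite{lyyy26}. Finally, the verification that the quantity defining $\|f\|_{WH^M_{X,L,\mathrm{atom}}(\mathbb{X})}$, namely $\sup_{k}2^k\|(\sum_i\mathbf{1}_{B_{k,i}})^{1/r}\|$-type term, is dominated by $\|f\|_{WH_{X,L}(\mathbb{X})}$ uses the bounded overlap together with Assumption \ref{as1.1}.

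For the last inclusion the core is a uniform estimate for $S_L$ on a single molecule: if $\alpha$ is an $(X,M,\varepsilon)$-molecule adapted to a ball $B$, then decomposing $\mathbb{X}$ into annuli $U_j(B):=(2^jB)\setminus(2^{j-1}B)$ and combining \eqref{e1.4} with the molecular $L^{-M}$-cancellation yields, for some $\varepsilon'>n(\frac1{s_0}-\frac12)$, a bound of the form $\|S_L(\alpha)\mathbf{1}_{U_j(B)}\|_{L^{q_0}(\mathbb{X})}\lesssim 2^{-j\varepsilon'}\,|2^jB|^{1/q_0}/\|\mathbf{1}_B\|_{X(\mathbb{X})}$, which is where $M>\frac{n}{2}(\frac1{s_0}-\frac12)$ and $\varepsilon>\frac{n}{s_0}$ enter. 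Given $f=\sum_{k,i}\lambda_{k,i}\alpha_{k,i}\in WH^{M,\varepsilon}_{X,L,\mathrm{mol}}(\mathbb{X})$ and $\alpha\in(0,\infty)$, I would pick $k_0\in\mathbb{Z}$ with $2^{k_0}\approx\alpha$ and split $f=\sum_{k\le k_0}(\cdots)+\sum_{k>k_0}(\cdots)$. On $\{S_L(\sum_{k\le k_0}\cdots)>\alpha/2\}$ one uses the molecular area-function estimates together with the Fefferman–Stein inequality (Assumption \ref{as1.1}) and the boundedness of $\mathcal{M}$ on the convexified associate space (Assumption \ref{as1.2}); on $\{S_L(\sum_{k>k_0}\cdots)>\alpha/2\}$ one simply estimates $\|\mathbf{1}_{\bigcup_{k>k_0,\,i}c_0B_{k,i}}\|_{X(\mathbb{X})}$ using the doubling property \eqref{e1.3}. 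Summing the two families of contributions — geometric series in $2^{k-k_0}$ — gives $\sup_{\alpha}\alpha\|\mathbf{1}_{\{S_L(f)>\alpha\}}\|_{X(\mathbb{X})}\lesssim\|f\|_{WH^{M,\varepsilon}_{X,L,\mathrm{mol}}(\mathbb{X})}$.

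I expect the main obstacle to be exactly this last weak-type summation. In contrast to the strong Hardy space case, the molecular series need not converge unconditionally, so one cannot simply add up the annular area-function estimates; the Fefferman–Soria-type truncation at the level $2^{k_0}\approx\alpha$ is unavoidable, and one must carefully balance the decay exponent $\varepsilon'$ against the convexity exponents $s_0$ and $q_0$ so that both halves of the split are summable in the $X(\mathbb{X})$-quasi-norm. This is precisely where the hypotheses $M>\frac{n}{2}(\frac1{s_0}-\frac12)$, $\varepsilon>\frac{n}{s_0}$, and $q_0\in(s_0,2]$ are used in an essential way, and where Assumptions \ref{as1.1} and \ref{as1.2} take over the role played by elementary Lebesgue-space manipulations in the classical theory.
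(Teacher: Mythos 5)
The paper does not actually provide a proof of Theorem~\ref{th1.1}: the remark immediately following it states that the result was independently obtained by Lin, Yang, Yang and Yuan in \cite{lyyy25x}, and the authors explicitly omit the proof, referring the reader to Theorems~3.4 and~3.5 of that reference. So there is no ``paper's own proof'' here to compare your plan against. That said, your proposal is the standard route through the literature on (weak) Hardy spaces associated with operators and BQBF spaces, and it is essentially the strategy followed in the cited works (\cite{syy22b,zyyw21,lyyy26,lyyy25x}): the cycle of bounded inclusions with the trivial atom$\to$molecule step, the Calder\'on reproducing formula combined with a tent-space/Whitney decomposition of the level sets $\{S_L(f)>2^k\}$ for the space$\to$atomic direction, and annular $L^2$-estimates for $S_L$ on a single molecule combined with a truncation at $2^{k_0}\approx\alpha$ for the molecular$\to$space direction. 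Your identification of the weak-type summation as the nontrivial point, and of the role of the parameter thresholds $M>\frac{n}{2}(\frac1{s_0}-\frac12)$, $\varepsilon>\frac{n}{s_0}$, $q_0\in(s_0,2]$ as the devices controlling both tails, is accurate and matches how these constraints are used in \cite{syy22b} and \cite{lyyy25x}. Two small technical points worth flagging before you try to execute this plan: the molecular annular estimate is naturally an $L^2$-estimate on $U_j(B)$ (since the molecule is $L^2$-normalized), and one then passes to $L^{q_0}$ with $q_0\le 2$ by H\"older on the annulus before invoking Lemma~\ref{le2.2}; and the quantity to be dominated is $\sup_{i}\|\{\sum_j[\lambda_{i,j}\mathbf{1}_{B_{i,j}}/\|\mathbf{1}_{B_{i,j}}\|_X]^{s_0}\}^{1/s_0}\|_X$, which with $\lambda_{i,j}=2^i\|\mathbf{1}_{B_{i,j}}\|_X$ reduces to $\sup_i 2^i\|(\sum_j\mathbf{1}_{B_{i,j}})^{1/s_0}\|_X$, so the convexification exponent there is $s_0$, and Remark~\ref{re2.1} is what lets you replace $\sum_j\mathbf{1}_{cB_{i,j}}$ by the un-dilated indicators. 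Your outline handles both of these implicitly; filling in the details would reproduce the argument of \cite[Theorems 3.4 and 3.5]{lyyy25x}.
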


\begin{remark}
We should  mention that, in the last few days, we learned that
X. Lin, D. Yang, S. Yang and W. Yuan \cite{lyyy25x} also have
independently obtained the same result with Theorem \ref{th1.1} when our paper was ready for submission. Therefore, we omit its proof here. For the concrete proof of Theorem \ref{th1.1}, we refer the reader to see \cite[Theorems 3.4 and 3.5]{lyyy25x}.
%They developed very interesting and groundbreaking work
%to weak Hardy-type spaces related
%to a non-negative self-adjoint operator satisfying the Davies-Gaffney estimate and a ball quasi-Banach function space, and established their molecular and atomic characterizations. Applying the molecular characterizations when $\mathbb{X}:=\rn$, they further showed that the Riesz
%transform is bounded from $WH_{X,L}(\rn)$ to the weak Hardy space $WH_X(\rn)$. While our article takes
%another direction in the main applications, under the frame of doubling metric measure space $\mathbb{X}$, we establish the boundedness of the Schr\"{o}dinger groups for fractional
%powers and the imaginary power operators of $L$ on the weak Hardy space $WH_{X,L}(\mathbb{X})$.
%Moreover,
%it is worthy to pointing out that this article was done totally independently of \cite{lyyy25x}.
\end{remark}

%We show Theorem \ref{th1.1} via using the atomic decomposition of weak $X$-tent space $WT_X(\mathbb{X}^+)$ established by
%Proposition \ref{pr2.1} below and some more subtle norm estimates in \eqref{e3.5} below.

\begin{theorem}\label{th1.2}
Let $X(\mathbb{X})$ be a ${\rm{BQBF}}$ space satisfying both Assumptions \ref{as1.1} and \ref{as1.2} for some
$p\in(0,\infty)$, $s_0\in(0,\min\{p,1\}]$, and $q_0\in(s_0,2]$.
Assume that $L$ is a non-negative self-adjoint operator on $L^2(\mathbb{X})$
satisfying the Davies-Gaffney estimate \eqref{e1.4}. Let $0<\gamma\neq1$,
$\beta\in[\gamma n(\frac{1}{s_0}-\frac{1}{2}),\infty)$, and $r\in(0,1]$. Then there exists a positive
constant $C$ such that, for any $\tau\in \mathbb{R}$ and $f\in WH_{X,L}(\mathbb{X})$,
\begin{align}\label{e1.5}
\lf\|(I+L)^{-\beta/2}e^{i\tau L^{\gamma/2}}f\r\|_{WH_{X,L}(\mathbb{X})}\leq C\lf(1+|\tau|\r)^{n(\frac{1}{s_0}-\frac{r}{2})}\|f\|_{WH_{X,L}(\mathbb{X})}.
\end{align}
\end{theorem}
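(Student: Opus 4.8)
The plan is to combine the molecular characterization of $WH_{X,L}(\mathbb{X})$ from Theorem \ref{th1.1} with a uniform-in-$\tau$ molecular estimate for the operator $T_\tau:=(I+L)^{-\beta/2}e^{i\tau L^{\gamma/2}}$. First I would recall that, by Theorem \ref{th1.1}, it suffices to show that $T_\tau$ maps (a suitable normalized family of) atoms of $WH^M_{X,L,\mathrm{atom}}(\mathbb{X})$ into constant multiples — with the claimed growth factor $(1+|\tau|)^{n(1/s_0-r/2)}$ — of molecules in $WH^{M,\varepsilon}_{X,L,\mathrm{mol}}(\mathbb{X})$, for $M$ and $\varepsilon$ chosen as in Theorem \ref{th1.1} (which is possible since $\beta\ge\gamma n(1/s_0-1/2)$ leaves enough room to fix $M>\frac n2(1/s_0-1/2)$ and $\varepsilon>n/s_0$). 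Here $r\in(0,1]$ is a free parameter that will be used to interpolate between an $L^2$ off-diagonal bound and the trivial $L^2$ bound; the exponent $n(1/s_0-r/2)$ degrades to the $r=1$ case $n(1/s_0-1/2)$ exactly when one uses the sharpest kernel decay.

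The core is the operator-theoretic estimate for $T_\tau$ acting on a function localized to a ball $B=B(x_B,\rho)$. Since $L$ is non-negative self-adjoint and $e^{-tL}$ satisfies Davies--Gaffney, spectral calculus gives, for the family $\{(t^2L)^{k}e^{-t^2L}\}$ used in the definition of atoms/molecules, the standard Gaffney-type off-diagonal $L^2\to L^2$ bounds with Gaussian decay in $\mathrm{dist}(E,F)^2/t^2$. The operator $e^{i\tau L^{\gamma/2}}$ is a Fourier multiplier of the self-adjoint operator $L$; writing it via the spectral resolution and using the known finite-propagation-speed / Davies--Gaffney consequences for such oscillatory multipliers (this is exactly where the argument of \cite{bdd24} for the strong space $H^p_L$ is invoked), one obtains, for $f$ supported in $B$ with $\|f\|_{L^2}\le 1$ and for any annulus $U_j(B)=2^{j}B\setminus 2^{j-1}B$,
\begin{align*}
\lf\|(I+L)^{-\beta/2}e^{i\tau L^{\gamma/2}}f\r\|_{L^2(U_j(B))}
\ls (1+|\tau|)^{\sigma}\,2^{-j\varepsilon}\,[V(B)]^{-1/2+1/s_0-\cdots}
\end{align*}
with the exponent $\sigma$ controlled, after interpolating the Gaussian-decay estimate against the trivial $L^2$ bound (with weight $r$), by $n(1/s_0-r/2)$; one also needs the matching cancellation estimate, i.e. that $L^{-M}T_\tau f$ has comparable bounds so that $T_\tau$(atom) is genuinely a molecule of order $(M,\varepsilon)$ — this follows because $(I+L)^{-\beta/2}$ and $e^{i\tau L^{\gamma/2}}$ commute with powers of $L$, and $\beta\ge \gamma n(1/s_0-1/2)$ guarantees the fractional smoothing $(I+L)^{-\beta/2}$ absorbs the $\gamma$-dilation in the oscillatory factor. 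Summing the annular pieces against the weight $2^{j\varepsilon}$ converges precisely because $\varepsilon>n/s_0$.

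Finally I would pass from the single-atom/molecule estimate to the weak-type quasi-norm bound. A function $f\in WH_{X,L}(\mathbb{X})$ admits, by definition of the weak atomic space, a decomposition $f=\sum_{i\in\zz}\sum_k \lambda_{i,k} a_{i,k}$ into level sets indexed by $i$ with the usual Calder\'on--Zygmund-type control of $\|\sum_k\lambda_{i,k}\mathbf 1_{B_{i,k}}\|_{X(\mathbb X)}$ by $2^{i}$ times a quantity bounded by $\|f\|_{WH_{X,L}}$. Applying $T_\tau$ termwise, splitting the sum over $i$ at a threshold depending on the level $\lambda$ in the distribution-function estimate (the standard "big piece / small piece" decomposition for weak Hardy spaces, as in \cite{bbhh22}), and using the molecular estimate above for each piece together with Assumption \ref{as1.1} (the Fefferman--Stein vector-valued inequality on $X^{1/s_0}$) to sum the molecular square functions, yields the bound on the weak quasi-norm of $T_\tau f$ with the stated constant $(1+|\tau|)^{n(1/s_0-r/2)}$. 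I expect the main obstacle to be establishing the uniform-in-$\tau$ off-diagonal estimate for the oscillatory multiplier $e^{i\tau L^{\gamma/2}}$ composed with $(I+L)^{-\beta/2}$ and then interpolating it correctly against the trivial $L^2$ bound to produce the sharp polynomial growth $n(1/s_0-r/2)$ — the summability bookkeeping over the weak-Hardy level sets is then routine given Theorem \ref{th1.1} and the techniques of \cite{bbhh22}.
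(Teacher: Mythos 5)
Your high-level plan (use Theorem \ref{th1.1} to decompose $f$ into atoms, apply $T_\tau:=(I+L)^{-\beta/2}e^{i\tau L^{\gamma/2}}$ termwise, estimate $T_\tau$ on a single atom by spectral-calculus off-diagonal $L^2$ bounds, then sum) is the right skeleton, but the two pivotal mechanisms in your write-up do not match how the paper actually gets the conclusion, and one of them is a genuine gap.

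First, the paper never shows that $T_\tau a$ is a multiple of a $(X,M,\varepsilon)_L$-molecule, and it never invokes the molecular characterization for the upper bound. Instead it works directly with the Lusin area function: for an atom $a$ on $B$, it proves the annulus estimates \eqref{e4.6}--\eqref{e4.7} for $\|S_L([I-e^{-r_B^2L}]^M F_\tau(\sqrt L)a)\|_{L^2(U_k(B^\tau))}$ and for the companion $P(r_B^2 L)$-piece, where crucially the annuli are those of the \emph{dilated} ball $B^\tau=(1+|\tau|)B$, not of $B$. These bounds carry no $\tau$ at all --- the decay is $2^{-k\beta/\gamma}$ uniformly in $\tau$. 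If you try to force $T_\tau a$ to be a molecule adapted to the original $B$, you will not get the annular decay with a $\tau$-independent constant; and if you adapt it to $B^\tau$, you then have to re-normalize $\|\mathbf 1_{B_{i,j}}\|_X$ against $\|\mathbf 1_{B^\tau_{i,j}}\|_X$ and re-check condition (ii)$_2$--(ii)$_3$ of Definition \ref{de2.3} for the inflated balls, which is nontrivial and, if done via Lemma \ref{le2.1}, yields a worse power of $(1+|\tau|)$ than claimed. So the molecular route is not a clean rephrasing of the paper's argument.

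Second, the role of the free parameter $r\in(0,1]$ in your sketch is wrong. You present it as an interpolation weight between a Gaussian off-diagonal bound and the trivial $L^2$ bound at the single-atom level. In the paper the single-atom annulus estimates \eqref{e4.6}--\eqref{e4.7} contain no $r$ whatsoever. The parameter $r$ enters only in the weak-norm bookkeeping: after splitting the superlevel set at the threshold $2^{i_0}\sim\alpha$ into a low-index piece and a high-index piece, the high-index piece ${\rm I_2}$ (and ${\rm I_4}$) is estimated by raising the indicator of the superlevel set to the power $r<1$ (so that $\alpha^{-r}$ appears) and then applying the annulus estimate in the $L^{2/r}$ form; this forces $r<1$ so that $\sum_{i\ge i_0}2^{irs_0}2^{-is_0}$ converges. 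The $(1+|\tau|)$ growth comes entirely from Lemma \ref{le2.2} applied with $\theta=2^k(1+|\tau|)$, giving the factor $\theta^{(1-s_0 q/2)n}$ resp.\ $\theta^{(1-s_0 r/2)n}$. Without identifying the dilated ball $B^\tau$ as the geometric object controlling the off-diagonal decay, and without the $\alpha^{-r}$ power trick in the distribution-function estimate, you cannot recover the precise exponent $n(1/s_0-r/2)$; your ``interpolate against the trivial $L^2$ bound'' step is the place where your argument would either stall or produce a non-sharp exponent.
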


Theorem \ref{th1.2} is proved by using the atomic characterization of $WH_{X,L}(\mathbb{X})$ obtained by
Theorem \ref{th1.1}, the functional calculi associated with $L$,
and some more subtle norm estimates obtained by \eqref{e4.6} and \eqref{e4.7} below.

By the fact that the Hardy space $H_{X,L}(\mathbb{X})$ (see Definition \ref{de2.2} for its definition) continuously embeds into
$WH_{X,L}(\mathbb{X})$, namely, if $f\in H_{X,L}(\mathbb{X})$, then $f\in WH_{X,L}(\mathbb{X})$ and
$\|f\|_{WH_{X,L}(\mathbb{X})}\leq\|f\|_{H_{X,L}(\mathbb{X})}$ (see, for instance, \cite[Proposition 2.16]{syy22b}).
Then, we have the following conclusion. Since their
proofs are similar to that of Theorem \ref{th1.2} and \cite[Theorem 4.16]{lyyy24}, we omit the details here.
\begin{theorem}\label{co1.1}
Let $X(\mathbb{X})$ be a ${\rm{BQBF}}$ space satisfying both Assumptions \ref{as1.1} and \ref{as1.2} for some
$p\in(0,\infty)$, $s_0\in(0,\min\{p,1\}]$, and $q_0\in(s_0,2]$.
Assume that $L$ is a non-negative self-adjoint operator on $L^2(\mathbb{X})$
satisfying the Davies-Gaffney estimate \eqref{e1.4}. Let $0<\gamma\neq1$
and $\beta\in[\gamma n(\frac{1}{s_0}-\frac{1}{2}),\infty)$.
Then there exists a positive constant $C$ such that, for any $\tau\in \mathbb{R}$ and $f\in H_{X,L}(\mathbb{X})$,
\begin{align*}
\lf\|(I+L)^{-\beta/2}e^{i\tau L^{\gamma/2}}f\r\|_{H_{X,L}(\mathbb{X})}\leq C\lf(1+|\tau|\r)^{n(\frac{1}{s_0}-\frac{1}{2})}\|f\|_{H_{X,L}(\mathbb{X})}.
\end{align*}
\end{theorem}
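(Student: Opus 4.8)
\emph{Proof proposal.} The plan is to run the argument of Theorem~\ref{th1.2} with the exponent parameter specialized to $r=1$, but with the weak atomic decomposition and its accompanying level-set splitting replaced by the ordinary atomic and molecular decompositions of $H_{X,L}(\mathbb{X})$; since $H_{X,L}(\mathbb{X})$ carries no extra layer of level sets, this is in effect a simplification of the proof of Theorem~\ref{th1.2}. Write $T_\tau:=(I+L)^{-\beta/2}e^{i\tau L^{\gamma/2}}$, which for every $\tau\in\mathbb{R}$ is bounded on $L^2(\mathbb{X})$ by the spectral theorem. First I would recall the atomic and molecular characterizations of $H_{X,L}(\mathbb{X})$ (the strong-space counterpart of Theorem~\ref{th1.1}; see \cite{lyyy26}): any $f\in H_{X,L}(\mathbb{X})$ admits an atomic decomposition $f=\sum_{j}\lambda_j a_j$, converging in $H_{X,L}(\mathbb{X})$, where each $a_j$ is an $(X,M)$-atom associated with $L$ adapted to a ball $B_j$ as in Definition~\ref{de2.4}, with $M\in(\frac n2[\frac1{s_0}-\frac12],\infty)\cap\nn$, and
\[
\Lambda\big(\{\lambda_j,B_j\}\big):=\left\|\left(\sum_{j}\left[\frac{|\lambda_j|\,\mathbf{1}_{B_j}}{\|\mathbf{1}_{B_j}\|_{X(\mathbb{X})}}\right]^{s_0}\right)^{1/s_0}\right\|_{X(\mathbb{X})}\lesssim\|f\|_{H_{X,L}(\mathbb{X})}.
\]
Since $\Lambda$ is positively homogeneous, it then suffices to find a constant $C$, independent of $\tau$, such that $(1+|\tau|)^{-n(\frac1{s_0}-\frac12)}\,T_\tau a$ is a fixed multiple of an $(X,M,\varepsilon)$-molecule associated with $L$ adapted to $B$ (with $\varepsilon\in(\frac n{s_0},\infty)$) whenever $a$ is an $(X,M)$-atom adapted to $B$; applying $T_\tau$ to $f=\sum_j\lambda_j a_j$ term by term (first for $f\in H_{X,L}(\mathbb{X})\cap L^2(\mathbb{X})$ and then extending by density, using the $L^2(\mathbb{X})$-boundedness of $T_\tau$) and invoking the molecular characterization yields \eqref{e1.5} with exponent $n(\frac1{s_0}-\frac12)$.

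For the atom-to-molecule step, write $a=L^M b$ with $b$ obeying the size conditions in Definition~\ref{de2.4}. As $T_\tau$ commutes with $L^M$, one has $T_\tau a=L^M(T_\tau b)$, so $T_\tau a$ automatically has the structural form of an $(X,M,\varepsilon)$-molecule, and what remains is to establish the annular $L^2$ bounds
\[
\big\|(r_B^2L)^k(T_\tau b)\big\|_{L^2(U_\ell(B))}\lesssim(1+|\tau|)^{n(\frac1{s_0}-\frac12)}\,2^{-\ell\varepsilon}\,\frac{r_B^{2M}\,|2^\ell B|^{1/2}}{\|\mathbf{1}_{2^\ell B}\|_{X(\mathbb{X})}}
\]
for all $k\in\{0,\dots,M\}$ and $\ell\in\{0,1,2,\dots\}$, where $U_\ell(B)$ is the $\ell$-th dyadic annulus of $B$. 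Converting the factor $|2^\ell B|^{1/2}/\|\mathbf{1}_{2^\ell B}\|_{X(\mathbb{X})}$ from scale $2^\ell r_B$ back to scale $r_B$ by means of the strong homogeneity \eqref{e1.3} and the properties of $X(\mathbb{X})$ in Assumptions~\ref{as1.1} and~\ref{as1.2} (this costs only a factor $\lesssim 2^{\ell n/s_0}$, harmless since $\varepsilon>n/s_0$), the claimed bound reduces to weighted $L^2$ off-diagonal estimates for $(r_B^2 L)^k(I+L)^{-\beta/2}e^{i\tau L^{\gamma/2}}$ acting on functions localized at scale $r_B$, i.e.\ exactly to the norm estimates \eqref{e4.6} and \eqref{e4.7}. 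These are obtained by representing $e^{i\tau L^{\gamma/2}}$ through the Davies--Gaffney semigroup via a subordination/Fourier formula and using the hypothesis $\beta\ge\gamma n(\frac1{s_0}-\frac12)$ to absorb into the factor $(1+|\tau|)^{n(\frac1{s_0}-\frac12)}$ the growth coming from the region into which the flow has transported mass.

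The main obstacle is precisely the $\tau$-dependent weighted $L^2$ estimates \eqref{e4.6}--\eqref{e4.7}: producing the sharp power $(1+|\tau|)^{n(\frac1{s_0}-\frac12)}$ while retaining rapid decay in the annular index $\ell$ is the delicate point, and is carried out along the lines of \cite{bdd24,bdhh23}. Everything else is routine: the passage from the atomic to the molecular side rests on the strong-space analogue of Theorem~\ref{th1.1} and on the Fefferman--Stein vector-valued maximal inequality of Assumption~\ref{as1.1}, and the term-by-term application of $T_\tau$ together with the identification of the limit is a standard density argument, with no $\tau$-uniformity of the convergence required since $\tau$ is fixed. This is the same scheme that, specialized to $\mathbb{X}=\rn$, underlies \cite[Theorem~4.16]{lyyy24}.
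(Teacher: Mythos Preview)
Your overall plan---use the atomic decomposition of $H_{X,L}(\mathbb{X})$, feed each atom through $T_\tau:=(I+L)^{-\beta/2}e^{i\tau L^{\gamma/2}}$, and control the output via the off-diagonal estimates underlying \eqref{e4.6}--\eqref{e4.7}---is the right one, and it matches what the paper intends (the proof is omitted with a pointer to Theorem~\ref{th1.2} and \cite[Theorem~4.16]{lyyy24}). But the specific atom-to-molecule mechanism you describe has a gap that costs you the sharp power in~$\tau$.

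You propose that $(1+|\tau|)^{-n(1/s_0-1/2)}T_\tau a$ be a molecule adapted to the \emph{original} ball $B$. This cannot work: the flow has carried mass out to scale $(1+|\tau|)r_B$, so for every annulus $U_\ell(B)$ with $2^\ell\lesssim 1+|\tau|$ (all of which sit inside $2B^\tau$) there is no off-diagonal decay available, only the global bound $\|(r_B^2L)^kT_\tau b\|_{L^2}\lesssim r_B^{2M}\mu(B)^{1/2}\|\mathbf{1}_B\|_X^{-1}$. Matching this against the molecular requirement $2^{-\ell\varepsilon}r_B^{2M}\mu(2^\ell B)^{1/2}\|\mathbf{1}_B\|_X^{-1}$ forces the constant to absorb $(1+|\tau|)^{\varepsilon}$ with $\varepsilon>n/s_0$, strictly worse than $n(1/s_0-1/2)$. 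Relatedly, the reduction ``exactly to \eqref{e4.6} and \eqref{e4.7}'' is not accurate: those are bounds for $S_L(\cdots)$ on annuli of the \emph{dilated} ball $B^\tau=(1+|\tau|)B$, not for $(r_B^2L)^kT_\tau b$ on annuli of $B$.

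What the paper's route actually does---and where the exponent $n(1/s_0-\tfrac12)$ really comes from---is to bypass the molecular repackaging and estimate $\|S_L(T_\tau f)\|_X$ directly: expand $f=\sum_j\lambda_j a_j$, use $s_0$-subadditivity to pass to $X^{1/s_0}$, decompose each $S_L(T_\tau a_j)$ over the annuli $U_k(B_j^\tau)$, apply \eqref{e4.6}--\eqref{e4.7} as stated, and then invoke Lemma~\ref{le2.2} with $q=2$ and dilation $\theta=2^k(1+|\tau|)$. The factor $\theta^{(1-s_0/2)n}$ from Lemma~\ref{le2.2}, after the $1/s_0$-root, is exactly $(1+|\tau|)^{n(1/s_0-1/2)}$; the $-\tfrac12$ comes from the $L^2$ normalization of atoms, and this is the step your molecule-at-$B$ formulation misses. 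In the strong space there is no level index $i$ and no $\mathrm{I_1}$--$\mathrm{I_4}$ splitting, so this is a genuine simplification of the proof of Theorem~\ref{th1.2}, just as you say in your opening sentence---only the summation should run through Lemma~\ref{le2.2} rather than through the molecular norm.
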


Applying Theorem \ref{co1.1} with $X(\mathbb{X}):=L^r(\mathbb{X})$, we obtain the following conclusion.
\begin{corollary}\label{co1.2}
Let $L$ be a non-negative self-adjoint operator on $L^2(\mathbb{X})$
satisfying the Davies-Gaffney estimate \eqref{e1.4}, $r\in(0,2)$, $0<\gamma\neq1$, and $\beta\in[\gamma n(\frac{1}{\min\{1,r\}}-\frac{1}{2}),\infty)$. Then there exists a positive constant $C$ such that, for any $\tau\in \mathbb{R}$ and $f\in H^r_{L}(\mathbb{X})$,
\begin{align*}
\lf\|(I+L)^{-\beta/2}e^{i\tau L^{\gamma/2}}f\r\|_{H^r_{L}(\mathbb{X})}\leq C\lf(1+|\tau|\r)^{n(\frac{1}{\min\{1,r\}}-\frac{1}{2})}\|f\|_{H^r_{L}(\mathbb{X})}.
\end{align*}
\end{corollary}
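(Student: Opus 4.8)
The plan is to derive Corollary \ref{co1.2} directly from Theorem \ref{co1.1} by specializing the ball quasi-Banach function space to the Lebesgue space $X(\mathbb{X}):=L^r(\mathbb{X})$ with $r\in(0,2)$. First I would verify that $L^r(\mathbb{X})$ is indeed a BQBF space on the doubling metric measure space $(\mathbb{X},d,\mu)$ and that it satisfies Assumptions \ref{as1.1} and \ref{as1.2} for an appropriate choice of the parameters $p$, $s_0$, and $q_0$. The natural choices are $p:=r$ and $s_0:=\min\{1,r\}$; one then needs $q_0\in(s_0,2]$, which can always be arranged since $r<2$ forces $s_0=\min\{1,r\}<2$. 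Assumption \ref{as1.1} for $L^r(\mathbb{X})$ amounts to the Fefferman--Stein vector-valued maximal inequality on $L^{r/t}(\mathbb{X})$, valid for $t\in(0,r)$ and $u\in(1,\infty)$, which holds on spaces of homogeneous type; Assumption \ref{as1.2} reduces, after unraveling the convexification and associate-space identifications $(L^r)^{1/s_0}=L^{r/s_0}$ and $(L^{r/s_0})'=L^{(r/s_0)'}$, to the boundedness of $\mathcal{M}$ on a Lebesgue space with exponent strictly greater than $1$, which again is classical. These facts are standard in this circle of ideas (see, e.g., \cite{shyy17,lyyy24,lyyy26}), so I would simply cite them.

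Next I would invoke the identification $H_{L^r,L}(\mathbb{X})=H^r_{L}(\mathbb{X})$ with equivalent quasi-norms. When $X(\mathbb{X})=L^r(\mathbb{X})$, the Lusin-area-function definition of $H_{X,L}(\mathbb{X})$ in Definition \ref{de2.2} collapses to the usual area-function definition of the Hardy space $H^r_{L}(\mathbb{X})$ associated with $L$; this is exactly the consistency statement recorded in the papers that introduced $H_{X,L}(\mathbb{X})$ (for instance \cite{lyyy24,lyyy26}), and it requires no new argument here. With this identification in hand, applying Theorem \ref{co1.1} to $X(\mathbb{X})=L^r(\mathbb{X})$, with $s_0=\min\{1,r\}$, yields precisely
\begin{align*}
\lf\|(I+L)^{-\beta/2}e^{i\tau L^{\gamma/2}}f\r\|_{H^r_{L}(\mathbb{X})}\leq C\lf(1+|\tau|\r)^{n(\frac{1}{\min\{1,r\}}-\frac{1}{2})}\|f\|_{H^r_{L}(\mathbb{X})}
\end{align*}
for all $\tau\in\mathbb{R}$ and $f\in H^r_{L}(\mathbb{X})$, provided $0<\gamma\neq1$ and $\beta\in[\gamma n(\frac{1}{\min\{1,r\}}-\frac{1}{2}),\infty)$, which is the asserted inequality.

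The only genuinely delicate point is the bookkeeping around the admissible range of $r$: Theorem \ref{co1.1} requires $s_0\in(0,\min\{p,1\}]$, and with $p=r$ this reads $s_0\in(0,\min\{r,1\}]$, so the choice $s_0=\min\{1,r\}$ sits at the endpoint and is legitimate; one must also keep $q_0\le 2$, which is why the hypothesis $r\in(0,2)$ (rather than $r\in(0,\infty)$) appears, since it guarantees $s_0<2$ and hence leaves room for $q_0\in(s_0,2]$. I expect this parameter-matching, together with a careful check that $L^r(\mathbb{X})$ with $r\le 1$ still qualifies as a BQBF space (the quasi-triangle inequality with constant $2^{1/r-1}$ is harmless, since Definition \ref{de1.1} only demands a quasi-norm), to be the main---and essentially the only---obstacle; once it is settled, the corollary is an immediate instance of Theorem \ref{co1.1}.
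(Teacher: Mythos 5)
Your proposal is correct and takes essentially the same route as the paper's proof: specialize Theorem~\ref{co1.1} to $X(\mathbb{X}):=L^r(\mathbb{X})$, check Assumptions~\ref{as1.1} and~\ref{as1.2} via the Fefferman--Stein inequality and the boundedness of $\mathcal{M}$ on appropriate Lebesgue convexifications/associate spaces, and read off the conclusion. One small point in your favour: you take $s_0:=\min\{1,r\}$ exactly at the endpoint allowed by Theorem~\ref{co1.1}, which is what produces the claimed power $n(\tfrac{1}{\min\{1,r\}}-\tfrac12)$; the paper instead writes $s_0\in(0,\min\{r,1\})$ strictly, which, taken literally, would yield a strictly larger exponent $n(\tfrac{1}{s_0}-\tfrac12)$ and hence a weaker inequality, so your endpoint bookkeeping is the cleaner way to match the stated bound (and Assumption~\ref{as1.2} still holds at this endpoint: if $r\le1$ one lands on $\mathcal M$ bounded on $L^\infty$, and if $1<r<2$ on $\mathcal M$ bounded on $L^{r'/2}$ with $r'/2>1$).
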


\begin{remark}\label{re1.1}
\begin{enumerate}
\item[\rm{(i)}] Since the operator $L$ satisfies the Davies-Gaffney estimate, which is weaker than the Gaussian upper bound estimate, Theorem \ref{co1.1} and Corollary \ref{co1.2} improve the known results in \cite[Theorems 4.16 and 4.17]{lyyy26} even when $\gamma\equiv 2$.
\item[\rm{(ii)}] We need to point out that \cite[Theorem 1.2]{bdd24} shows that Corollary \ref{co1.2} with $r\in(0,1]$ and
$\beta\in[\gamma n(\frac{1}{r}-\frac{1}{2}),\infty)$ also holds true.
\end{enumerate}
\end{remark}

Another important application of Theorem \ref{th1.2} involves the Riesz means associated with $L$, which are defined through the following operator:
$$I_{s,t}(L):=st^{-s}\int^t_0(t-\lambda)^{-s-1}e^{-i\lambda L^{\gamma/2}}d\lambda,$$
where $s,t>0$, while $I_{s,t}(L)=\bar{I}_{s,-t}(L)$ for $t<0$ (see, for instance, \cite[p.264]{bdd24}).
Applying Theorem \ref{th1.2} and standard arguments, we have the following conclusion, with the proof details omitted.
\begin{corollary}\label{co1.3}
Let $X(\mathbb{X})$ be a ${\rm{BQBF}}$ space satisfying both Assumptions \ref{as1.1} and \ref{as1.2} for some
$p\in(0,\infty)$, $s_0\in(0,\min\{p,1\}]$, and $q_0\in(s_0,2]$.
Assume that $L$ is a non-negative self-adjoint operator on $L^2(\mathbb{X})$
satisfying the Davies-Gaffney estimate \eqref{e1.4}. Let $0<\gamma\neq1$
and $s\in[\gamma n(\frac{1}{s_0}-\frac{1}{2}),\infty)$.
Then there exists a positive constant $C$ such that, for any $t>0$ and $f\in WH_{X,L}(\mathbb{X})$,
\begin{align*}
\lf\|I_{s,t}(L)f\r\|_{WH_{X,L}(\mathbb{X})}\leq C\|f\|_{WH_{X,L}(\mathbb{X})}.
\end{align*}
\end{corollary}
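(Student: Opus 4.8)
The plan is to deduce Corollary \ref{co1.3} from Theorem \ref{th1.2} by feeding the Riesz mean, viewed as a summable superposition of the smoothed Schr\"{o}dinger propagators $(I+L)^{-\beta/2}e^{-i\lambda L^{\gamma/2}}$, into the very argument that proves Theorem \ref{th1.2}. First I would observe that $I_{s,t}(L)$ is a spectral multiplier of $L^{\gamma/2}$: by Fubini and the spectral theorem, $I_{s,t}(L)=\Phi_s(tL^{\gamma/2})$ for a fixed bounded function $\Phi_s$ on $[0,\infty)$ (the Fourier transform, suitably regularized, of the compactly supported Riesz kernel occurring in the definition of $I_{s,t}(L)$), and $\Phi_s$ decays at infinity at a rate controlled by the order $s$. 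With the choice $\beta:=\gamma n(\frac{1}{s_0}-\frac{1}{2})$, which is admissible in Theorem \ref{th1.2}, the hypothesis $s\in[\gamma n(\frac{1}{s_0}-\frac{1}{2}),\infty)$ is precisely what guarantees that, after splitting off the smoothing factor, the remaining multiplier stays under control; this is the exponent balance on which the corollary rests.

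Next I would decompose $I_{s,t}(L)$ into dyadic pieces (first regularizing the endpoint $\lambda=t$ by the standard integration-by-parts / finite-part device, which only produces pieces of the same type together with a bounded remainder), writing $I_{s,t}(L)=\sum_{j\ge 0}(I+L)^{-\beta/2}T_j(L)$, where $T_j(L)=\int_{\rr}c_j(\lambda)\,e^{-i\lambda L^{\gamma/2}}\,d\lambda$ with each $c_j$ supported in a dyadic region of the time parameter and
\[
\int_{\rr}|c_j(\lambda)|\,(1+|\lambda|)^{n(\frac{1}{s_0}-\frac{r}{2})}\,d\lambda\ls 2^{-j\dz}
\]
for some $\dz>0$, uniformly in $t\in(0,\infty)$. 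The geometric gain $2^{-j\dz}$ and the uniformity in $t$ are exactly what the decay of $\Phi_s$ (equivalently, the hypothesis on $s$) and the admissibility $r\in(0,1]$ buy us.

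Then, for each $j$, Theorem \ref{th1.2} with this $\beta$ and a fixed $r\in(0,1]$ gives
\[
\lf\|(I+L)^{-\beta/2}e^{-i\lambda L^{\gamma/2}}f\r\|_{WH_{X,L}(\mathbb{X})}\ls(1+|\lambda|)^{n(\frac{1}{s_0}-\frac{r}{2})}\|f\|_{WH_{X,L}(\mathbb{X})}.
\]
Because $WH_{X,L}(\mathbb{X})$ carries only a quasi-norm and, unlike a Hardy space, does not enjoy a clean subadditivity for countable sums, Minkowski's inequality cannot be invoked; instead, exactly as in the proof of Theorem \ref{th1.2}, I would route the argument through the atomic (or molecular) decomposition of Theorem \ref{th1.1}, apply $T_j(L)$ to a fixed atom (obtaining a molecule with molecular norm $\ls 2^{-j\dz}$ by the displayed kernel bound together with the molecular estimates in the proof of Theorem \ref{th1.2}), absorb the sum over $j$ into the geometric series $\sum_{j\ge 0}2^{-j\dz}\ls 1$, and re-sum the resulting molecular layers. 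This yields $\|I_{s,t}(L)f\|_{WH_{X,L}(\mathbb{X})}\ls\|f\|_{WH_{X,L}(\mathbb{X})}$ with a constant independent of $t>0$; no $(1+|\tau|)$-type factor survives, since the only trace of the time parameter is the scale $t$, which has been absorbed into the decay of $\Phi_s$.

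The main obstacle is the middle step: establishing that, once the smoothing $(I+L)^{-\beta/2}$ has been extracted, each dyadic piece of the (regularized) Riesz mean really is a superposition $\int_{\rr} c_j(\lambda)e^{-i\lambda L^{\gamma/2}}\,d\lambda$ whose weighted kernel mass $\int_{\rr}|c_j(\lambda)|(1+|\lambda|)^{n(\frac{1}{s_0}-\frac{r}{2})}\,d\lambda$ is geometrically small in $j$ and uniform in $t$. This is a careful accounting of how the smoothness order $s$ of the Riesz kernel is spent against both the factor corresponding to $(I+L)^{\beta/2}$ and the weight $(1+|\lambda|)^{n(\frac{1}{s_0}-\frac{r}{2})}$ coming from Theorem \ref{th1.2}; it is here, and only here, that the condition $s\ge\gamma n(\frac{1}{s_0}-\frac{1}{2})$ is forced. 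With this bookkeeping in hand, everything else is a routine adaptation of the proof of Theorem \ref{th1.2} and of the classical treatment of Riesz means (cf.\ \cite{bdd24}).
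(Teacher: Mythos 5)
The paper does not actually prove Corollary \ref{co1.3}: the text preceding the statement says only ``Applying Theorem \ref{th1.2} and standard arguments, we have the following conclusion, with the proof details omitted,'' so there is no in-paper argument to compare yours against. Your observation that $I_{s,t}(L)=\Phi_s(tL^{\gamma/2})$ for a fixed profile $\Phi_s$ with $|\Phi_s(\xi)|\lesssim(1+|\xi|)^{-s}$ is correct (and you are right to read the printed exponent $(t-\lambda)^{-s-1}$ as a typo for $(t-\lambda)^{s-1}$, without which the integral diverges), and your decision to route the superposition through the atomic decomposition rather than Minkowski's inequality is sound, since $WH_{X,L}(\mathbb{X})$ carries only a quasi-norm and the continuous superposition cannot be split off layer by layer without an argument.

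The genuine gap is in the step you yourself flag as the ``main obstacle,'' and it is not just bookkeeping. You propose a \emph{global} decomposition $I_{s,t}(L)=\sum_{j\ge 0}(I+L)^{-\beta/2}T_j(L)$ with $T_j(L)=\int c_j(\lambda)e^{-i\lambda L^{\gamma/2}}\,d\lambda$ and $\int|c_j(\lambda)|(1+|\lambda|)^{n(1/s_0-r/2)}\,d\lambda\lesssim 2^{-j\delta}$ \emph{uniformly} in $t>0$. But the smoothing factor $(I+L)^{-\beta/2}$ lives at a fixed spectral scale, while the Riesz profile $\Phi_s(t\xi)$ is concentrated on the moving scale $\xi\sim t^{-1}$, i.e.\ $L\sim t^{-2/\gamma}$. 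In the low-frequency regime $tL^{\gamma/2}\lesssim 1$ one has $\Phi_s(tL^{\gamma/2})\approx 1$ and there is no smoothing at all to extract, yet your ansatz tacitly forces the unbounded factor $(I+L)^{\beta/2}$ into the kernel $c_0$; on the spectral range $L\lesssim t^{-2/\gamma}$ that factor grows like $t^{-\beta/\gamma}$, so $\int|c_0|(1+|\lambda|)^{n(1/s_0-r/2)}\,d\lambda$ blows up as $t\to 0$ rather than being uniformly bounded. The decomposition therefore cannot be taken in the form you wrote: the piece with $tL^{\gamma/2}\lesssim 1$ must be treated separately, by a uniform bounded-spectral-multiplier estimate on $WH_{X,L}(\mathbb{X})$ (or by a rescaled form of Theorem \ref{th1.2}), and only the high-frequency pieces $tL^{\gamma/2}\sim 2^j$, $j\ge 1$, can be fed through $(I+L)^{-\beta/2}e^{-i\lambda L^{\gamma/2}}$. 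Relatedly, the assertion that the hypothesis $s\ge\gamma n(\frac1{s_0}-\frac12)$ is ``precisely what guarantees'' the geometric gain $2^{-j\delta}$ is not substantiated by the outlined bookkeeping: the Riesz kernel supplies smoothing of order $s$ in the variable $L^{\gamma/2}$ (hence of order $s\gamma/2$ in $L^{1/2}$), while $(I+L)^{-\beta/2}$ is smoothing of order $\beta$ in $L^{1/2}$, and the $\gamma$-dependence of the threshold has to be tracked explicitly through this comparison together with the weight $(1+|\lambda|)^{n(1/s_0-r/2)}$; as stated, your outline does not exhibit why the exponent balance closes.
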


As applications, we obtain the boundedness estimate of imaginary power operators of $L$ on $WH_{X,L}(\mathbb{X})$.
To describe the details, we begin with recalling the definitions of imaginary power operators.
We assume that $(\mathbb{X},d,\mu)$ is an Ahlfors $n$-regular metric measure space, which means that there exist constants
$n,C_1,C_2$ such that, for any $x\in\mathbb{X}$ and $r>0$,
\begin{align}\label{e1.6}
C_1r^n\leq\mu(B(x,r))\leq C_2r^n.
\end{align}

Let $L$ be a non-negative self-adjoint operator on $L^2(\mathbb{X})$. In this article, we need the
following assumption on $L$.
\begin{assumption}\label{as1.4}
The kernels of the semigroup $\{e^{-tL}\}_{t\in(0,\infty)}$, denoted by $\{K_t\}_{t\in(0,\infty)}$, are measurable functions on
$\mathbb{X}\times\mathbb{X}$ and satisfy the {\it Gaussian upper bound estimate},
that is, there exist positive constants $C$ and $c$ such that, for any $t\in(0,\infty)$ and $x,y\in\mathbb{X}$,
\begin{align}\label{e1.7}
\lf|K_t(x,y)\r|\leq \frac{C}{V(x,\sqrt{t})}\mathrm{exp}\lf\{-\frac{[d(x,y)]^2}{ct}\r\}.
\end{align}
\end{assumption}

By the spectral theory, $L$ admits a spectral resolution
$$L=\int^\infty_0\lambda dE(\lambda),$$
where $\{E(\lambda):\lambda\geq0\}$ is the spectral resolution of $L$. If $F$ is a bounded Borel measurable function on
$[0,\infty)$, then the operator
$$F(L)=\int^\infty_0F(\lambda) dE(\lambda)$$
is bounded on $L^2(\mathbb{X})$. In what follows, we use $K_{F(L)}(x,y)$ to denote the kernel of $F(L)$.
Before considering the imaginary power operator $L^{i\tau}$, we also need the following
additional condition regarding the weighted $L^2$ bound for the kernels of the spectral multiplier operators $F(L)$.
\begin{assumption}\label{as1.5}
There exist a positive constant $C$ and $q\in[2,\infty]$ such that, for any even positive integer
$\alpha\in2\nn$ and any $R\in(0,\infty)$,
\begin{align}\label{e1.8}
\int_G\lf|K_{F(L)}(x,y)\r|^2\lf[1+Rd(x,y)\r]^{2\alpha}d\mu(x)\leq CR^n\lf\|\delta_{R^2}F\r\|^2_{W^q_\alpha(\mathbb{R})}
\end{align}
holds uniformly in $y\in\mathbb{X}$ for any bounded Borel functions $F:[0,\infty)\rightarrow\mathbb{C}$
supported in $[\frac{R^2}{4},R^2]$, where
$$\delta_{R^2}F(\lambda):=F(R^2\lambda)$$
and
$$\|F\|_{W^q_\alpha(\mathbb{R})}:=\lf\|\lf(I-d^2/d\lambda^2\r)^{\alpha/2}F\r\|_{L^q(\mathbb{R})}.$$
\end{assumption}
\begin{theorem}\label{th1.3}
Let $(\mathbb{X},d,\mu)$ be a metric measure space satisfying the Ahlfors $n$-regular condition \eqref{e1.6}, and $L$ be a non-negative self-adjoint operator on $L^2(\mathbb{X})$
satisfying the Gaussian upper estimate \eqref{e1.7} and \eqref{e1.8}, $X(\mathbb{X})$ be a ${\rm{BQBF}}$ space satisfying both Assumptions \ref{as1.1} and \ref{as1.2} for some
$p\in(0,\infty)$, $s_0\in(0,\min\{p,1\}]$, and $q_0\in(s_0,2]$.
Assume that $\alpha>n(\frac{1}{s_0}-\frac{1}{2})$ and $r\in(\frac{n/s_0}{\alpha+n/2},1]$. Then there exists a positive constant $C$ such that, for any $\tau\in \mathbb{R}$ and $f\in WH_{X,L}(\mathbb{X})$,
\begin{align}\label{e1.9}
\lf\|L^{i\tau}f\r\|_{WH_{X,L}(\mathbb{X})}\leq C\lf(1+|\tau|\r)^{n(\frac{1}{s_0}-\frac{r}{2})}\|f\|_{WH_{X,L}(\mathbb{X})}.
\end{align}
\end{theorem}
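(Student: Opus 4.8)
The plan is to derive \eqref{e1.9} from the atomic and molecular characterizations of $WH_{X,L}(\mathbb{X})$ established in Theorem \ref{th1.1}, following the same scheme as the proof of Theorem \ref{th1.2} but with the functional-calculus input used there for $(I+L)^{-\beta/2}e^{i\tau L^{\gamma/2}}$ replaced by the weighted $L^2$ kernel bounds for the dyadic frequency pieces of $L^{i\tau}$ supplied by Assumption \ref{as1.5}. Since $\lambda\mapsto\lambda^{i\tau}$ has modulus one, $L^{i\tau}$ extends to a bounded operator on $L^2(\mathbb{X})$ and hence is well defined on a dense subspace of $WH_{X,L}(\mathbb{X})$, on which it suffices to prove \eqref{e1.9}. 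For such $f$, Theorem \ref{th1.1} provides an atomic decomposition $f=\sum_{j\in\zz}\sum_i\lambda^j_ia^j_i$, where each $a^j_i$ is an $(X,M)$-atom associated with a ball $B^j_i$ (with $M$ and $\varepsilon$ fixed admissible in Theorem \ref{th1.1}), the index $j$ records the level-set structure of $f$, and the corresponding weak-type quantity is controlled by $\|f\|_{WH_{X,L}(\mathbb{X})}$; by linearity $L^{i\tau}f=\sum_{j,i}\lambda^j_iL^{i\tau}a^j_i$.

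The heart of the matter is to show that $L^{i\tau}$ sends each $(X,M)$-atom $a=L^Mb$ on a ball $B=B(x_B,r_B)$ to a sum $\sum_{\ell\ge0}c_\ell m_\ell$, where $m_\ell$ is a fixed multiple of an $(X,M,\varepsilon)$-molecule associated with the dilate $2^\ell B$ and $\sum_{\ell\ge0}|c_\ell|\lesssim(1+|\tau|)^{n(\frac1{s_0}-\frac r2)}$. To this end I would fix a dyadic partition of unity $\{\psi_k\}_{k\in\zz}$ adapted to the normalization in \eqref{e1.8}, write $\lambda^{i\tau}=\sum_{k\in\zz}\psi_k(\lambda)\lambda^{i\tau}=:\sum_{k\in\zz}F_k(\lambda)$ with $\supp F_k\subset[2^{2(k-1)},2^{2k}]$, hence $L^{i\tau}a=\sum_{k\in\zz}F_k(L)a$. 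Since $|2^{\pm i\tau}|=1$, the rescaled pieces satisfy $\|\delta_{2^{2k}}F_k\|_{W^q_\alpha(\mathbb{R})}=\|\psi(\cdot)(\cdot)^{i\tau}\|_{W^q_\alpha(\mathbb{R})}\lesssim(1+|\tau|)^{\alpha}$ uniformly in $k$ (each derivative of $\lambda^{i\tau}$ costs only a power of $\tau$ on the fixed compact support of $\psi$), and plugging this into \eqref{e1.8} with $R=2^k$ gives
\[
\int_{\mathbb{X}}\left|K_{F_k(L)}(x,y)\right|^2\left[1+2^kd(x,y)\right]^{2\alpha}\,d\mu(x)\lesssim2^{kn}(1+|\tau|)^{2\alpha}
\]
uniformly in $y\in\mathbb{X}$. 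One then splits $\sum_k$ into high frequencies $2^{2k}\ge r_B^{-2}$ and low frequencies $2^{2k}<r_B^{-2}$: for the high ones, the displayed estimate together with the size estimates of the atom $a$ shows that $\sum_{2^{2k}\ge r_B^{-2}}F_k(L)a$ has $L^2$-mass on $2^{\ell+1}B\setminus2^\ell B$ that decays in $\ell$ at the rate governed by $\alpha$; for the low ones, one additionally uses the cancellation $a=L^Mb$, so that $F_k(L)a=F_k(L)L^Mb$ acquires a gain comparable to $(2^{2k}r_B^2)^M\le1$, which makes the low-frequency series converge and yields the same molecular decay. Regrouping by the dilation factor $2^\ell$ produces the required $\sum_{\ell\ge0}c_\ell m_\ell$, while the remaining molecular conditions on $L^{-M}(L^{i\tau}a)=L^{i\tau}b$ follow from the same kernel bounds since $L^{i\tau}$ commutes with the spectral projections of $L$. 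The admissible range $r\in(\frac{n/s_0}{\alpha+n/2},1]$, equivalent to $\alpha+\frac n2>\frac{n}{rs_0}$ and reducing for $r=1$ to the hypothesis $\alpha>n(\frac1{s_0}-\frac12)$, is exactly what makes these series converge with the stated dependence on $\tau$ and with a molecular decay $\varepsilon>n/s_0$ compatible with Theorem \ref{th1.1}.

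Finally, inserting the molecular decomposition of each $L^{i\tau}a^j_i$ into $L^{i\tau}f=\sum_{j,i}\lambda^j_iL^{i\tau}a^j_i$ and invoking $WH_{X,L}(\mathbb{X})=WH^{M,\varepsilon}_{X,L,\mathrm{mol}}(\mathbb{X})$ from Theorem \ref{th1.1}, it remains to estimate the weak-type quasi-norm of this molecular combination. Here I would argue exactly as in the reassembly step of the proof of Theorem \ref{th1.2}: the level-set bookkeeping of $\{a^j_i\}$ is untouched, each ball $B^j_i$ is only replaced by its dilates $2^\ell B^j_i$, and the quasi-norm estimates of the type \eqref{e4.6} and \eqref{e4.7} (controlling $\|\mathbf{1}_{2^\ell B}\|_{X^{1/s_0}(\mathbb{X})}$ by a polynomial factor times $\|\mathbf{1}_{B}\|_{X^{1/s_0}(\mathbb{X})}$ via Assumption \ref{as1.1} and the strong homogeneity \eqref{e1.3}), together with the convergence of the corresponding series over $\ell$ (which holds since $\varepsilon>n/s_0$), absorb the dilations; collecting constants yields \eqref{e1.9}.

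The step I expect to be the main obstacle is the core atom-to-molecule estimate above, and specifically the quantitative bookkeeping that balances the growth $(1+|\tau|)^{\alpha}$ coming from the $W^q_\alpha$-Sobolev norm of $\psi\,\lambda^{i\tau}$ against the spatial spreading of $F_k(L)a$ at scale $2^{-k}$ and against the molecular decay $\varepsilon>n/s_0$ forced by Theorem \ref{th1.1}; doing this carefully is what produces the constraint $r\in(\frac{n/s_0}{\alpha+n/2},1]$ and pins the exponent $n(\frac1{s_0}-\frac r2)$ in \eqref{e1.9}. A secondary technical nuisance is that the partial sums $\sum_{|k|\le N}F_k(L)a$ need not belong to $WH_{X,L}(\mathbb{X})$ individually, so the convergence of $\sum_kF_k(L)a$ in the appropriate topology has to be justified, which is handled as in the corresponding step of the proof of Theorem \ref{th1.2}.
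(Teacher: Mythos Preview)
Your frequency decomposition of $L^{i\tau}$ and the appeal to \eqref{e1.8} with $\|\delta_{2^{2k}}F_k\|_{W^q_\alpha}\lesssim(1+|\tau|)^\alpha$ are on target and are essentially what the paper uses (via the citation to \cite[(3.7)]{bbhh22}). The gap is in the packaging. The paper does \emph{not} push $L^{i\tau}a$ through the molecular characterization; instead it first invokes Lemma~\ref{le5.1} to reduce to the discrete square function $S_{L,\psi}$ and then proves the $L^2$ annular bounds \eqref{e5.6}--\eqref{e5.7},
\[
\lf\|S_{L,\psi}\bigl([I-e^{-r_B^2L}]^ML^{i\tau}a\bigr)\r\|_{L^2(U_k(B^\tau))}\lesssim 2^{-k\alpha}[\mu(B)]^{1/2}\|\mathbf{1}_B\|_{X(\mathbb{X})}^{-1},
\]
with $B^\tau:=(1+|\tau|)B$. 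Two features matter: the base ball is the $\tau$-dilate $B^\tau$, so the $(1+|\tau|)^\alpha$ growth from \eqref{e1.8} is absorbed into the geometry and \emph{no} $\tau$-dependence survives on the right-hand side; and the final exponent $n(\tfrac{1}{s_0}-\tfrac r2)$ appears only later, when Lemma~\ref{le2.2} is applied with dilation $\theta=2^k(1+|\tau|)$ in the $X^{1/s_0}$-norm reassembly \eqref{e5.11}--\eqref{e5.14}.

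Your proposal instead asserts $L^{i\tau}a=\sum_{\ell\ge0}c_\ell m_\ell$ with each $m_\ell$ a molecule on $2^\ell B$ and $\sum_\ell|c_\ell|\lesssim(1+|\tau|)^{n(1/s_0-r/2)}$, then appeals to $WH_{X,L}=WH^{M,\varepsilon}_{X,L,\mathrm{mol}}$. There are two problems. First, the exponent $n(\tfrac1{s_0}-\tfrac r2)$ cannot emerge at the $L^2$/molecular level: \eqref{e1.8} delivers only $(1+|\tau|)^\alpha$, and without routing through an $X$-norm estimate on $\tau$-dilated balls (which the choice of $2^\ell B$ rather than $2^\ell B^\tau$ forecloses) there is no mechanism to convert $\alpha$ into the claimed exponent. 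Second, even granting such an expansion of each atom, the weak molecular space of Definition~\ref{de2.3} demands a rigid structure---coefficients of the exact form $2^i\|\mathbf{1}_{B_{i,j}}\|_{X(\mathbb{X})}$ together with the bounded-overlap condition (ii)$_3$---that the triply-indexed family $\{c_\ell\lambda^j_i\,m^j_{i,\ell}\}$ does not obviously satisfy, so the appeal to Theorem~\ref{th1.1} is not a one-line step. (As a side remark, \eqref{e4.6}--\eqref{e4.7} are $L^2$ annular bounds on the square function, not the indicator-norm comparison you describe; the latter is Lemma~\ref{le2.1}.) The fix is to follow the paper: take $B^\tau$ as the base ball, bound the square function directly on its annuli, and let Lemma~\ref{le2.2} produce the $\tau$-exponent at the $X$-norm stage.
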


 We show Theorem \ref{th1.3} via borrowing some ideas from the proof  of \cite[Theorem 1.1]{bbhh22},
the atomic characterization of $WH_{X,L}(\mathbb{X})$ obtained by Theorem \ref{th1.1},
and using some more subtle norm estimates in \eqref{e5.6} and \eqref{e5.7} below.

By the fact that the Hardy space $H_{X,L}(\mathbb{X})$ (see Definition \ref{de2.2} for its definition) continuously embeds into
$WH_{X,L}(\mathbb{X})$ (see, for instance, \cite[Proposition 2.16]{syy22b}).
Then, we have the following conclusion, whose
proof is similar to that of Theorem \ref{th1.3} and \cite[Theorem 4.16]{lyyy24}. To control the length of this paper, we leave the details to the interested readers.
\begin{theorem}\label{co1.4}
Let $(\mathbb{X},d,\mu)$ be a metric measure space satisfying the Ahlfors $n$-regular condition \eqref{e1.6}.
Let $L$ be a non-negative self-adjoint operator on $L^2(\mathbb{X})$
satisfying the Gaussian upper estimate \eqref{e1.7} and \eqref{e1.8}.
Assume that $X(\mathbb{X})$ is a ${\rm{BQBF}}$ space satisfying both Assumptions \ref{as1.1} and \ref{as1.2} for some
$p\in(0,\infty)$, $s_0\in(0,\min\{p,1\}]$, and $q_0\in(s_0,2]$.
Let $\alpha>\frac{n}{2}$. Then there exists a positive constant $C$ such that, for any $\tau\in \mathbb{R}$ and $f\in H_{X,L}(\mathbb{X})$,
\begin{align*}
\lf\|L^{i\tau}f\r\|_{H_{X,L}(\mathbb{X})}\leq C\lf(1+|\tau|\r)^{n(\frac{1}{s_0}-\frac{1}{2})}\|f\|_{H_{X,L}(\mathbb{X})}.
\end{align*}
\end{theorem}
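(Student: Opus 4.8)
The strategy mirrors that of Theorem \ref{th1.3}, carried out for $H_{X,L}(\mathbb{X})$ instead of $WH_{X,L}(\mathbb{X})$; since one works directly with a single atom rather than with the level sets of a weak function, the interpolation parameter $r$ of Theorem \ref{th1.3} is taken equal to $1$, which accounts for the exponent $n(\frac1{s_0}-\frac12)$. First I would invoke the atomic and molecular characterizations of $H_{X,L}(\mathbb{X})$ — the $H_{X,L}$-analogue of Theorem \ref{th1.1}, available under exactly the present hypotheses (see \cite{lyyy26} and \cite[Theorems 3.4 and 3.5]{lyyy25x}) — with $M\in\nn\cap(\frac n2[\frac1{s_0}-\frac12],\infty)$ chosen large and $\varepsilon\in(\frac n{s_0},\infty)$. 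By these characterizations, the $L^2$-isometry property of $L^{i\tau}$ (which justifies $L^{i\tau}f=\sum_j\lambda_j L^{i\tau}a_j$ in $H_{X,L}(\mathbb{X})$ for an atomic decomposition $f=\sum_j\lambda_j a_j$), and the norm estimates \eqref{e5.6} and \eqref{e5.7} (which control the coefficient functional under the ball-dilations that will appear), the theorem reduces to the following claim: for every $(X,M)$-atom $a$ adapted to a ball $B=B(x_B,r_B)$, the function $L^{i\tau}a$ admits a decomposition $L^{i\tau}a=\sum_{l=0}^{\infty}\eta_l\,m_l$ into $(X,M,\varepsilon)$-molecules $m_l$, each adapted to the dilated ball $2^lB$, with $\sum_{l=0}^\infty|\eta_l|^{s_0}\le C(1+|\tau|)^{n(\frac1{s_0}-\frac12)s_0}$ for a constant $C$ independent of $a$ and $\tau$.

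To produce the molecules I would split $\lambda^{i\tau}$ dyadically in the spectrum of $L$: fix $\phi\in C_c^\infty((1/4,1])$ with $\sum_{k\in\zz}\phi(2^{-2k}\lambda)=1$ on $(0,\infty)$, put $F_k(\lambda):=\phi(2^{-2k}\lambda)\lambda^{i\tau}$ so that $L^{i\tau}=\sum_{k\in\zz}F_k(L)$ strongly on $L^2(\mathbb{X})$, and separate $L^{i\tau}a=\sum_{2^{-k}<r_B}F_k(L)a+\sum_{2^{-k}\ge r_B}F_k(L)a=:\mathrm{I}+\mathrm{II}$. The crucial input is Assumption \ref{as1.5}: since $\delta_{2^{2k}}F_k(\lambda)=2^{2ki\tau}\phi(\lambda)\lambda^{i\tau}$ has modulus at most $|\phi(\lambda)|$ while every $\lambda$-derivative of $\lambda^{i\tau}$ costs a factor $|\tau|$, one has $\|\delta_{2^{2k}}F_k\|_{W^q_\alpha(\rr)}\lesssim(1+|\tau|)^\alpha$ uniformly in $k$ (first for even integers $\alpha$, as in \eqref{e1.8}, then for all real $\alpha$ by analytic interpolation on the Sobolev scale). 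Feeding this into \eqref{e1.8} with $R\sim2^k$ and using the Ahlfors regularity \eqref{e1.6} together with $\supp a\subset B$, one obtains weighted off-diagonal $L^2$ bounds of the schematic form
\begin{align*}
\lf\|F_k(L)a\r\|_{L^2(U_l(B))}\lesssim(1+|\tau|)^\alpha\,\lf(2^kr_B\r)^{\frac n2-\alpha}\,2^{-l\alpha}\,\mu(B)^{1/2}\,\lf\|\mathbf 1_B\r\|_{X(\mathbb{X})}^{-1}
\end{align*}
for the fine scales $2^{-k}<r_B$ (here $U_l(B)$ is the $l$-th dyadic annulus of $B$ and $(2^kr_B)^{n/2-\alpha}\le1$ because $\alpha>n/2$), whereas for the coarse scales $2^{-k}\ge r_B$ one replaces the use of $\|a\|_{L^2}$ by the cancellation $a=L^Mb$, writing $F_k(L)a=2^{2kM}G_k(L)b$ with $G_k(\lambda):=(2^{-2k}\lambda)^M\phi(2^{-2k}\lambda)\lambda^{i\tau}$, which produces an extra factor $(2^kr_B)^{2M}$ that is summable once $M$ is large.

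Summing over $k$ in each regime (convergent geometric series, using $\alpha>n/2$ for $\mathrm{I}$ and $M$ large for $\mathrm{II}$) yields, for each $l$, a multiple $\eta_l m_l$ of an $(X,M,\varepsilon)$-molecule adapted to $2^lB$; the additional bounds needed to make $m_l$ a bona fide molecule, namely those on $L^{\ell'}(L^{i\tau}b)$ for $\ell'=0,\dots,M$, follow from the same computation with $L^{\ell'}b$ in place of $b$, which is legitimate because $L^{i\tau}a=L^M(L^{i\tau}b)$ and $L^{\ell'}(L^{i\tau}b)=L^{i\tau}(L^{\ell'}b)$. Finally, redistributing the powers of $2^l$ by means of $\mu(2^lB)\sim2^{ln}\mu(B)$ and $\|\mathbf 1_{2^lB}\|_{X(\mathbb{X})}\lesssim2^{ln/s_0}\|\mathbf 1_B\|_{X(\mathbb{X})}$, and using only a suitable fraction of the available weight $2\alpha$ in Assumption \ref{as1.5} (precisely the optimization encoded by the parameter $r$ in Theorem \ref{th1.3}, here with $r=1$), one arrives at $\sum_l|\eta_l|^{s_0}\lesssim(1+|\tau|)^{n(\frac1{s_0}-\frac12)s_0}$, exactly as in \cite[Theorem 4.16]{lyyy24}; combining with \eqref{e5.6} and \eqref{e5.7} over the atomic decomposition of $f$ gives the asserted bound.

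The main obstacle is this quantitative core — establishing the weighted off-diagonal $L^2$ estimate for $F_k(L)$ with the sharp $\tau$-dependence $(1+|\tau|)^\alpha$ (including the passage from even-integer to real $\alpha$), and then carrying out the $k$- and $l$-summations so that the decay rates remain compatible with the $(X,M,\varepsilon)$-molecule definition for the prescribed $\varepsilon>n/s_0$ while the $(1+|\tau|)$-power comes out exactly $n(\frac1{s_0}-\frac12)$ and no larger. Once these estimates are in hand, the reduction to atoms, the convergence of the series, and the final assembly via \eqref{e5.6}-\eqref{e5.7} are routine and parallel the corresponding steps in the proof of Theorem \ref{th1.3}.
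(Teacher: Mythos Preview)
Your overall strategy—reduce to atoms via the atomic characterization of $H_{X,L}(\mathbb{X})$, then show that $L^{i\tau}$ maps each atom to a controlled sum of molecules—is a legitimate route, and it is indeed close in spirit to \cite{bbhh22}. It is, however, \emph{not} the route the paper indicates: the paper says to follow the proof of Theorem~\ref{th1.3}, which means working through the discrete square function $S_{L,\psi}$ (via a strong-space analogue of Lemma~\ref{le5.1}), invoking the annular $L^2$ estimates \eqref{e5.6}--\eqref{e5.7} on the $\tau$-dilated annuli $U_k(B^\tau)$, and summing with Lemma~\ref{le2.2}. Your repeated appeals to \eqref{e5.6}--\eqref{e5.7} are therefore misplaced: those are square-function estimates on $U_k(B^\tau)$ and belong to the paper's scheme, not to the molecular scheme you describe.

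More substantively, your execution loses the sharp $\tau$-exponent. You build molecules on the undilated annuli $U_l(B)$ and record the kernel bound with an explicit factor $(1+|\tau|)^\alpha$; you then propose to recover $n(\tfrac1{s_0}-\tfrac12)$ by ``using only a suitable fraction of the available weight $2\alpha$'' and interpolating to real $\alpha$. But to make the resulting object an $(X,M,\varepsilon)$-molecule you need $\varepsilon>n/s_0$, which forces $\alpha'>n(\tfrac1{s_0}-\tfrac12)$ strictly, so you only obtain $(1+|\tau|)^{n(1/s_0-1/2)+\epsilon}$ for arbitrary $\epsilon>0$, not the endpoint. The mechanism that removes this $\epsilon$-loss is precisely what you omit: one must work on the $\tau$-dilated ball $B^\tau:=(1+|\tau|)B$. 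Then, for $y\in B$ and $x\in U_k(B^\tau)$, the weight $[1+2^\ell d(x,y)]^{2\alpha}$ already contains $(1+|\tau|)^{2\alpha}$, which exactly cancels the $(1+|\tau|)^{2\alpha}$ coming from $\|\delta_{2^{2\ell}}F_\ell\|_{W^q_\alpha}^2$; this is why \eqref{e5.6}--\eqref{e5.7} carry \emph{no} $(1+|\tau|)$-factor, and the sharp power $n(\tfrac1{s_0}-\tfrac12)$ then emerges cleanly from the dilation factor $\theta=2^k(1+|\tau|)$ in Lemma~\ref{le2.2}. If you want to keep the molecular route, adapt your molecules to $B^\tau$ (not $2^lB$); otherwise, follow the square-function argument of Theorem~\ref{th1.3} as the paper intends.
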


Using Theorem \ref{co1.4} with $X(\mathbb{X}):=L^r(\mathbb{X})$, we obtain the following corollary.
The proof of this corollary is similar to that Corollary \ref{co1.2}, the details being omitted here.
\begin{corollary}\label{co1.5}
Let $(\mathbb{X},d,\mu)$ be a metric measure space satisfying the Ahlfors $n$-regular condition \eqref{e1.6}.
Let $L$ be a non-negative self-adjoint operator on $L^2(\mathbb{X})$
satisfying the Gaussian upper estimate \eqref{e1.7} and \eqref{e1.8}.
Let $\alpha>\frac{n}{2}$ and $r\in(0,2)$. Then there exists a positive constant $C$ such that, for any $\tau\in \mathbb{R}$ and $f\in H^r_{L}(\mathbb{X})$,
\begin{align*}
\lf\|L^{i\tau}f\r\|_{H^r_{L}(\mathbb{X})}\leq C\lf(1+|\tau|\r)^{n(\frac{1}{\min\{1,r\}}-\frac{1}{2})}\|f\|_{H^r_{L}(\mathbb{X})}.
\end{align*}
\end{corollary}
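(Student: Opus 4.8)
The strategy is to obtain Corollary~\ref{co1.5} as the specialization of Theorem~\ref{co1.4} to the ball quasi-Banach function space $X(\mathbb{X}):=L^r(\mathbb{X})$, exactly as Corollary~\ref{co1.2} is deduced from Theorem~\ref{co1.1}. The first step is to verify that, for $r\in(0,2)$, the space $L^r(\mathbb{X})$ on the Ahlfors $n$-regular (hence doubling) space $\mathbb{X}$ is a BQBF space fulfilling Assumptions~\ref{as1.1} and \ref{as1.2} with the parameters
$$p:=r,\qquad s_0:=\min\{1,r\},\qquad q_0\in(r,2],$$
the last interval being nonempty since $r<2$. Indeed, for any $t\in(0,\infty)$ one has $(L^r)^{1/t}(\mathbb{X})=L^{r/t}(\mathbb{X})$; hence for $t\in(0,p)$ the exponent $r/t$ lies in $(1,\infty)$ and the Fefferman--Stein vector-valued maximal inequality on $L^{r/t}(\mathbb{X})$ (valid on any space of homogeneous type for every $u\in(1,\infty)$) gives Assumption~\ref{as1.1}. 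With $s_0=\min\{1,r\}$ the space $(L^r)^{1/s_0}(\mathbb{X})=L^{r/s_0}(\mathbb{X})$ has exponent $r/s_0\ge1$ and so is a BBF space; its associate space is $L^{(r/s_0)'}(\mathbb{X})$, and the $\frac{1}{(q_0/s_0)'}$-convexification of the latter is $L^{(r/s_0)'/(q_0/s_0)'}(\mathbb{X})$, whose exponent lies in $(1,\infty]$ because $q_0>r\ge s_0$; thus $\mathcal{M}$ is bounded there and Assumption~\ref{as1.2} holds.

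The second step is the identification of the abstract Hardy space with the concrete one. By Definition~\ref{de2.2}, when $X=L^r$ the defining quasi-norm of $H_{X,L}(\mathbb{X})$, which is the $X$-quasi-norm of the Lusin area function associated with $L$, is precisely the $L^r(\mathbb{X})$-quasi-norm of that area function; consequently $H_{L^r,L}(\mathbb{X})=H^r_L(\mathbb{X})$ with equivalent quasi-norms (and, for $r\in(1,2)$, the space $H^r_L(\mathbb{X})$ coincides with $L^r(\mathbb{X})$ by the Gaussian bound \eqref{e1.7}). Since $L$ is assumed to satisfy \eqref{e1.7} and \eqref{e1.8}, and $\alpha>n/2$, every hypothesis of Theorem~\ref{co1.4} is met by this choice of $X$. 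Applying that theorem yields, for all $\tau\in\mathbb{R}$ and $f\in H^r_L(\mathbb{X})$,
$$\lf\|L^{i\tau}f\r\|_{H^r_L(\mathbb{X})}\le C\lf(1+|\tau|\r)^{n(\frac{1}{s_0}-\frac12)}\|f\|_{H^r_L(\mathbb{X})},$$
and substituting $s_0=\min\{1,r\}$ gives exactly the asserted bound.

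There is no real obstacle here: the entire content of the corollary is contained in Theorem~\ref{co1.4}, so the only care required is the bookkeeping of the triple $(p,s_0,q_0)$ for $L^r$ — in particular checking that $s_0=\min\{1,r\}$ is the largest admissible value, so that the exponent $n(\frac{1}{s_0}-\frac12)$ produced by Theorem~\ref{co1.4} coincides with the sharp exponent $n(\frac{1}{\min\{1,r\}}-\frac12)$ claimed here — together with the standard identification $H_{L^r,L}(\mathbb{X})=H^r_L(\mathbb{X})$. For $r\in(1,2)$ the bound reads $(1+|\tau|)^{n/2}$, recovering the known $L^r$-estimate for imaginary powers, while for $r\in(0,1]$ it gives the genuine Hardy-space estimate with exponent $n(\frac1r-\frac12)$.
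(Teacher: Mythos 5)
Your proposal is correct and follows exactly the route the paper itself indicates: specialize Theorem~\ref{co1.4} to $X(\mathbb{X}):=L^r(\mathbb{X})$ and verify Assumptions~\ref{as1.1} and \ref{as1.2} with $p:=r$, $s_0:=\min\{1,r\}$, and $q_0\in(r,2]$. You are in fact slightly more careful than the paper's analogous proof of Corollary~\ref{co1.2}, which takes $s_0$ in the open interval $(0,\min\{r,1\})$ even though matching the stated exponent $n(\tfrac{1}{\min\{1,r\}}-\tfrac12)$ requires the endpoint choice $s_0=\min\{1,r\}$; your check that the relevant convexified associate space remains an $L^q$ with $q>1$ (forcing $q_0>r$ when $r>1$) shows this endpoint is admissible.
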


\begin{remark}\label{re1.2}
\begin{enumerate}
\item[\rm{(i)}]
Let $r\in(0,\infty)$. In this case, Corollary \ref{co1.5} was obtained in \cite[Theorem 1.1]{bbhh22}.
\item[\rm{(ii)}]
We should point out that, even when it comes to Euclidean spaces setting ($\mathbb{X}:=\rn$),
Theorems \ref{th1.2}, \ref{co1.1},  \ref{th1.3} and  \ref{co1.4} are also new.
\item[\rm{(iii)}] As is known to all, there are many non-negative self-adjoint operators and their heat satisfies the Gaussian upper estimate \eqref{e1.7} and \eqref{e1.8}.
For example, when $\mathbb{X}:=\rn$, the Hermit operator
$$\mathcal{H}:=-\Delta+|x|^2$$
is also a non-negative self-adjoint operators on $L^2(\rn)$ and its heat satisfies the Gaussian upper estimate \eqref{e1.7} and \eqref{e1.8}.
Therefore, our main results pave the way for further studies such
as establishing the boundedness estimate of Schr\"{o}dinger groups for fractional powers of $L:=\mathcal{H}$
 and the imaginary power operators of $L:=\mathcal{H}$ on $H_{X,L}(\rn)$ or $WH_{X,L}(\rn)$.
For more progress about the Hermit operator $\mathcal{H}$, we refer the reader to see \cite{bbhh22,bdhh23}. To limit the
length of this paper, we do not pursue these problems here.

\end{enumerate}
\end{remark}

The organization of this article is as follows. In Section \ref{s2}, we present some notions and some basic properties
of the ball quasi-Banach function spaces and the weak Hardy spaces used in this paper.
%Section \ref{s3} is devoted to the proof of Theorem \ref{th1.1}.
The proofs of Theorem \ref{th1.2} and Corollary \ref{co1.2} are presented in Section \ref{s4}.
In Section \ref{s5}, we will give some preliminary results related
to the discrete square function characterization for $WH_{X,L}(\mathbb{X})$ space,
in the remainder of this section, we show Theorem \ref{th1.3}.
In Section \ref{s6}, we apply the results obtained in Sections \ref{s4} and \ref{s5} to some specific spaces including weighted Lebesgue spaces, Orlicz spaces, mixed-norm Lebesgue spaces, and variable Lebesgue spaces associated with operators.

\subsection{Notation}
\hskip\parindent
At the end of this section, we make some conventions on notation. Let $\nn:=\{1,\, 2,\,\ldots\}$ and $\zz_+:=\{0\}\cup\nn$.
Throughout the whole article, we denote by $C$ a \emph{positive
constant} which is independent of the main parameters, but it may
vary from line to line. The \emph{symbol} $f\ls g$ means that $f\le Cg$. If $f\ls
g$ and $g\ls f$, then we write $f\sim g$.
For each ball $B:=B(x_B,r_B)$ of $\mathbb{X}$, with \emph{center} $x_B\in\mathbb{X}$ and \emph{radius}
$r_B\in(0,\fz)$, and $\delta\in(0,\fz)$, let $\delta B:=B(x_B,\delta r_B)$.
For any $k\in\nn$ and any ball $B\subset\mathbb{X}$, let $U_k(B):=(2^kB)\setminus(2^{k-1}B)$ and $U_0(B):=B$.
The symbol $\mathfrak{U}(\mathbb{X})$ denotes the set of all $\mu$-measurable functions on $\mathbb{X}$.
For any measurable subset $E$ of $\mathbb{X}$, we denote the \emph{set} $\mathbb{X}\setminus E$ by $E^\complement$
and its \emph{characteristic function} by $\mathbf{1}_{E}$.
%Moreover, denote by $\cs(\mathbb{X})$ the \emph{space of all Schwartz functions} and $\cs'(\mathbb{X})$
%its \emph{dual space} (namely, the \emph{space of all tempered distributions}).
Finally, for any given $q\in[1,\fz]$, we denote by $q'$
its \emph{conjugate exponent}, namely, $1/q + 1/q'= 1$.

%%%%%%%%%%%%%%%%%%%%%%%%%%%%%%%%%%%%%%%%%%%%%%%%%%%%%%%%%%%%%%%%%%

%%%%%%%%%%%%%%%%%%%%%% section 2 %%%%%%%%%%%%%%%%%%%%%%%%%%%%%%%%%%

%%%%%%%%%%%%%%%%%%%%%%%%%%%%%%%%%%%%%%%%%%%%%%%%%%%%%%%%%%%%%%%%%%%%%
\section{Preliminaries \label{s2}}
\hskip\parindent
In this section, we describe some basic properties of the ball quasi-Banach function spaces and  the weak Hardy spaces.

\subsection{Basic Properties of (Weak) Ball Quasi-Banach Function Spaces}
\hskip\parindent
The following two lemmas are just \cite[Proposition 4.8(i)]{syy22b} and \cite[Proposition 2.14]{syy22a}, respectively.
\begin{lemma}\label{le2.1}
Let $X(\mathbb{X})$ be a ${\rm{BQBF}}$ space satisfying Assumption \ref{as1.1} for some $p\in(0,\infty)$. Then, for any given
$t\in(0,\infty)$ and $s\in(\max\{1,t/p\},\infty)$, there exists a positive constant $C$ such that, for any $\tau\in[1,\infty)$,
any $\{\lambda_j\}_{j\in\nn}\in[0,\infty)$, and any sequence $\{B_j\}_{j\in\nn}$ of balls
$$\lf\|\sum_{j\in\nn}\lambda_j\mathbf{1}_{\tau B_j}\r\|_{X^{\frac1{t}}(\mathbb{X})}
\leq C\tau^{sn}\lf\|\sum_{j\in\nn}\lambda_j\mathbf{1}_{B_j}\r\|_{X^{\frac1{t}}(\mathbb{X})},$$
where $n$ is as in \eqref{e1.3}.
\end{lemma}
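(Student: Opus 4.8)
The plan is to deduce the estimate from the vector-valued Fefferman--Stein maximal inequality of Assumption~\ref{as1.1}, by dominating each dilated indicator $\mathbf{1}_{\tau B_j}$ pointwise by a suitable power of the Hardy--Littlewood maximal average of $\mathbf{1}_{B_j}$. Of course I may assume that $\|\suj\lambda_j\mathbf{1}_{B_j}\|_{X^{1/t}(\mathbb{X})}<\infty$, since otherwise there is nothing to prove. The first ingredient is the elementary pointwise bound
$$\mathbf{1}_{\tau B}\leq C\tau^{n}\cm(\mathbf{1}_B)\qquad(B\text{ a ball},\ \tau\in[1,\infty)),$$
with $C$ depending only on $C_2$ and $n$: if $x\in\tau B=B(x_B,\tau r_B)$ then $B\subset B(x,2\tau r_B)\subset B(x_B,3\tau r_B)$, so, by monotonicity of the measure and the strong homogeneity property~\eqref{e1.3} (with $\lambda=3\tau\in[1,\infty)$), $\cm(\mathbf{1}_B)(x)\geq\mu(B)/\mu(B(x_B,3\tau r_B))\geq[C_2(3\tau)^{n}]^{-1}$, while $\mathbf{1}_{\tau B}$ vanishes off $\tau B$. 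Since $s\geq1$ and $\mathbf{1}_{\tau B}$ is idempotent, raising the last display to the power $s$ gives $\mathbf{1}_{\tau B}\leq C\tau^{ns}[\cm(\mathbf{1}_B)]^{s}$.

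Next I would put $f_j:=\lambda_j^{1/s}\mathbf{1}_{B_j}$, so that $\lambda_j[\cm(\mathbf{1}_{B_j})]^{s}=[\cm(f_j)]^{s}$ by the positive homogeneity of $\cm$, and sum the previous pointwise bound over $j$:
$$\suj\lambda_j\mathbf{1}_{\tau B_j}\leq C\tau^{ns}\suj[\cm(f_j)]^{s}
=C\tau^{ns}\lf\{\lf(\suj[\cm(f_j)]^{s}\r)^{1/s}\r\}^{s}.$$
Applying $\|\cdot\|_{X^{1/t}(\mathbb{X})}$, the monotonicity axiom Definition~\ref{de1.1}(ii), and the elementary convexification identity $\|g^{\sigma}\|_{X^{1/t}(\mathbb{X})}=\|g\|_{X^{\sigma/t}(\mathbb{X})}^{\sigma}$ (valid for $g\geq0$, $\sigma\in(0,\infty)$), I then get
$$\lf\|\suj\lambda_j\mathbf{1}_{\tau B_j}\r\|_{X^{1/t}(\mathbb{X})}
\leq C\tau^{ns}\lf\|\lf(\suj[\cm(f_j)]^{s}\r)^{1/s}\r\|_{X^{s/t}(\mathbb{X})}^{s}.$$

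Now the hypothesis $s\in(\max\{1,t/p\},\infty)$ is used precisely here: it guarantees both $s\in(1,\infty)$ and $t/s\in(0,p)$, so Assumption~\ref{as1.1}, applied with $s$ in the role of $u$ on the space $X^{1/(t/s)}(\mathbb{X})=X^{s/t}(\mathbb{X})$, bounds the right-hand side by $C\tau^{ns}\|(\suj|f_j|^{s})^{1/s}\|_{X^{s/t}(\mathbb{X})}^{s}$. Since $|f_j|^{s}=\lambda_j\mathbf{1}_{B_j}$, one has $(\suj|f_j|^{s})^{1/s}=(\suj\lambda_j\mathbf{1}_{B_j})^{1/s}$, and the identity $\|h^{1/s}\|_{X^{s/t}(\mathbb{X})}=\|h\|_{X^{1/t}(\mathbb{X})}^{1/s}$ collapses this to $C\tau^{ns}\|\suj\lambda_j\mathbf{1}_{B_j}\|_{X^{1/t}(\mathbb{X})}$, which is the asserted inequality.

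Since this statement is quoted from the literature, there is no genuine difficulty; the one point to get right is the choice of exponent in the very first step. The naive linear bound $\mathbf{1}_{\tau B}\ls\tau^{n}\cm(\mathbf{1}_B)$ is by itself useless, because the vector-valued maximal inequality fails at the $\ell^{1}$ level; raising it to the power $s$ turns the $\ell^{1}$-type sum $\suj\lambda_j\mathbf{1}_{\tau B_j}$ into an $\ell^{s}$-type sum, which is exactly the shape that Assumption~\ref{as1.1} controls, and the range $s>\max\{1,t/p\}$ is dictated by needing the convexification index $t/s$ to lie in $(0,p)$ while keeping $s>1$. The remaining steps — the $p$-convexification bookkeeping, and (if one is fastidious about infinite sums) a truncation to finite sums followed by the Fatou-type property Definition~\ref{de1.1}(iii) — are routine.
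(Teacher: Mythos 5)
Your proof is correct, and it is the standard argument for this kind of dilation lemma: the paper itself does not spell out a proof but simply cites \cite[Proposition~4.8(i)]{syy22b}, and your chain of steps — the pointwise bound $\mathbf{1}_{\tau B}\lesssim\tau^{n}\mathcal{M}(\mathbf{1}_B)$, raising it to the power $s$ to convert the $\ell^1$ sum into an $\ell^s$ sum, the substitution $f_j=\lambda_j^{1/s}\mathbf{1}_{B_j}$, and the convexification bookkeeping $\|g^{\sigma}\|_{X^{1/t}}=\|g\|_{X^{\sigma/t}}^{\sigma}$ so that Assumption~\ref{as1.1} can be invoked with $u=s$ on $X^{s/t}=X^{1/(t/s)}$, which requires exactly $t/s\in(0,p)$ and $s>1$, i.e.\ $s>\max\{1,t/p\}$ — is precisely how such a lemma is proved in that literature. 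The one cosmetic remark is that no truncation-plus-Fatou step is actually needed: all the summands are non-negative, so the pointwise inequality and Assumption~\ref{as1.1} already apply directly to the full countable sums.
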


\begin{lemma}\label{le2.2}
Let $X(\mathbb{X})$ be a ${\rm{BQBF}}$ space satisfying Assumption \ref{as1.2} for some $s_0\in(0,\infty)$
and $q_0\in(s_0,\infty)$. Assume that $q\in[q_0,\infty)$. Then
there exists a positive constant $C$ such that, for any
$\{\lambda_j\}_{j\in\nn}\in(0,\infty)$ and $\{a_j\}_{j\in\nn}\in L^q(\mathbb{X})$ satisfying both
$\|a_j\|_{L^q(\mathbb{X})}\leq\lambda_j[\mu(B_j)]^{\frac1{q}}$ and $\supp(a_j)\subset \theta B_j$ for any $j\in\nn$,
$$\lf\|\sum_{j\in\nn}\lf|a_j\r|^{s_0}\r\|_{X^{\frac1{s_0}}(\mathbb{X})}
\leq C\theta^{(1-\frac{s_0}{q})n}\lf\|\sum_{j\in\nn}\lf|\lambda_j\r|^{s_0}
\mathbf{1}_{B_j}\r\|_{X^{\frac1{s_0}}(\mathbb{X})},$$
where $n$ is as in \eqref{e1.3}.
\end{lemma}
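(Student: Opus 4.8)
The plan is to pass to the associate space of $X^{1/s_0}(\mathbb{X})$, which is a ${\rm BBF}$ space by Assumption \ref{as1.2}, and thereby reduce the desired estimate to the boundedness of the Hardy--Littlewood maximal operator $\mathcal{M}$ that Assumption \ref{as1.2} also supplies. Throughout I work with $\theta\in[1,\infty)$, the range relevant to the atomic/molecular decompositions where this lemma is used.

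First, I would reduce to the case $q=q_0$. If $\supp(a_j)\subset\theta B_j$ and $\|a_j\|_{L^q(\mathbb{X})}\le\lambda_j[\mu(B_j)]^{1/q}$, then Hölder's inequality on $\theta B_j$ together with the strong homogeneity \eqref{e1.3} gives
$$\|a_j\|_{L^{q_0}(\mathbb{X})}\le\|a_j\|_{L^q(\mathbb{X})}[\mu(\theta B_j)]^{\frac1{q_0}-\frac1{q}}\le C\theta^{n(\frac1{q_0}-\frac1{q})}\lambda_j[\mu(B_j)]^{1/q_0},$$
so that $\{a_j\}_{j\in\nn}$ satisfies the hypotheses with $q_0$ and $\widetilde\lambda_j:=C\theta^{n(\frac1{q_0}-\frac1{q})}\lambda_j$ in place of $q$ and $\lambda_j$. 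Applying the case $q=q_0$ and adding the exponents of $\theta$, namely $n(1-\frac{s_0}{q_0})+ns_0(\frac1{q_0}-\frac1{q})=n(1-\frac{s_0}{q})$, then yields the general case.

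Next, for $q=q_0$, set $\sigma:=q_0/s_0\in(1,\infty)$, so that $\sigma'=(q_0/s_0)'$. By the associate-space duality for ${\rm BBF}$ spaces (a consequence of Definition \ref{de1.1}(iii) and (v)), it suffices to bound $\int_{\mathbb{X}}(\sum_{j\in\nn}|a_j|^{s_0})g\,d\mu$ uniformly over nonnegative $g\in\mathfrak{U}(\mathbb{X})$ with $\|g\|_{(X^{1/s_0})'(\mathbb{X})}\le1$. For each $j$, Hölder's inequality with exponents $(\sigma,\sigma')$, the bound $\|a_j\|_{L^{q_0}(\mathbb{X})}^{s_0}\le\lambda_j^{s_0}[\mu(B_j)]^{s_0/q_0}$, the doubling inequality $\mu(\theta B_j)\le C\theta^n\mu(B_j)$, the identity $1/\sigma'=1-s_0/q_0$, and the pointwise estimate $\frac1{\mu(\theta B_j)}\int_{\theta B_j}g^{\sigma'}\,d\mu\le\mathcal{M}(g^{\sigma'})(x)$ valid for every $x\in\theta B_j\supset B_j$ give
$$\int_{\theta B_j}|a_j|^{s_0}g\,d\mu\le\|a_j\|_{L^{q_0}(\theta B_j)}^{s_0}\Big(\int_{\theta B_j}g^{\sigma'}\,d\mu\Big)^{1/\sigma'}\le C\theta^{n(1-\frac{s_0}{q_0})}\lambda_j^{s_0}\int_{B_j}[\mathcal{M}(g^{\sigma'})]^{1/\sigma'}\,d\mu.$$
Summing over $j$ and applying Hölder's inequality for the pair $X^{1/s_0}(\mathbb{X})$, $(X^{1/s_0})'(\mathbb{X})$ yields
$$\int_{\mathbb{X}}\Big(\sum_{j\in\nn}|a_j|^{s_0}\Big)g\,d\mu\le C\theta^{n(1-\frac{s_0}{q_0})}\Big\|\sum_{j\in\nn}\lambda_j^{s_0}\mathbf{1}_{B_j}\Big\|_{X^{1/s_0}(\mathbb{X})}\Big\|[\mathcal{M}(g^{\sigma'})]^{1/\sigma'}\Big\|_{(X^{1/s_0})'(\mathbb{X})}.$$
By the definition of the $\frac1{\sigma'}$-convexification the last factor equals $\|\mathcal{M}(g^{\sigma'})\|_{[(X^{1/s_0})']^{1/\sigma'}(\mathbb{X})}^{1/\sigma'}$; since $\sigma'=(q_0/s_0)'$, Assumption \ref{as1.2} bounds $\mathcal{M}$ on exactly this space, so this factor is at most $C\|g^{\sigma'}\|_{[(X^{1/s_0})']^{1/\sigma'}(\mathbb{X})}^{1/\sigma'}=C\|g\|_{(X^{1/s_0})'(\mathbb{X})}\le C$. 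Taking the supremum over $g$ completes the case $q=q_0$, hence the lemma.

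The part needing the most care is the matching of convexification exponents: Assumption \ref{as1.2} supplies only one maximal inequality — on the $\frac1{(q_0/s_0)'}$-convexification of $(X^{1/s_0})'(\mathbb{X})$ — so the Hölder exponents used on the balls must be tuned to make precisely $[\mathcal{M}(g^{(q_0/s_0)'})]^{1/(q_0/s_0)'}$ appear, which is why the reduction to $q=q_0$ in the first step is essential rather than cosmetic. The remaining ingredients (the associate-space representation of $\|\cdot\|_{X^{1/s_0}(\mathbb{X})}$, Hölder's inequality for the associate pair, and $\mu(\theta B_j)\lesssim\theta^n\mu(B_j)$) are standard, but they do rely on $X^{1/s_0}(\mathbb{X})$ being a genuine ${\rm BBF}$ space, which is exactly the first half of Assumption \ref{as1.2}.
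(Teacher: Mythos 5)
Your proof is correct. Note that the paper does not actually prove this lemma --- it simply cites it as \cite[Proposition 2.14]{syy22a} --- but your argument is the standard duality-plus-maximal-function proof that underlies that cited result: reduce to $q=q_0$ by H\"older and doubling, dualize against $(X^{1/s_0})'(\mathbb{X})$, use H\"older with exponents $(q_0/s_0,(q_0/s_0)')$ on each ball to bring out $\|a_j\|_{L^{q_0}}^{s_0}$ and an averaged power of $g$, dominate the latter by $[\mathcal{M}(g^{(q_0/s_0)'})]^{1/(q_0/s_0)'}$ on $B_j$, and close with the maximal inequality on the $\frac{1}{(q_0/s_0)'}$-convexification of the associate space. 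The exponent bookkeeping ($n(1-\tfrac{s_0}{q_0})+ns_0(\tfrac{1}{q_0}-\tfrac{1}{q})=n(1-\tfrac{s_0}{q})$) is right, and your restriction to $\theta\ge 1$ is the correct reading since \eqref{e1.3} only applies for dilation factors $\ge 1$ and the lemma is only invoked in that regime.

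One small remark on rigor: the associate-space representation $\|f\|_{X^{1/s_0}(\mathbb{X})}=\sup\{\int_{\mathbb{X}}|f|g\,d\mu: g\ge 0,\ \|g\|_{(X^{1/s_0})'(\mathbb{X})}\le 1\}$ is not merely ``a consequence of Definition \ref{de1.1}(iii) and (v)''; it is the Lorentz--Luxemburg biduality $Y=Y''$ for BBF spaces, which requires its own proof (and does use the Fatou property (iii)). It is a known fact in this framework, so the step is valid, but it deserves a precise citation rather than being presented as immediate from the axioms.
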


\begin{remark}\label{re2.1}
Let $X(\mathbb{X})$ be a ${\rm{BQBF}}$ space satisfying Assumption \ref{as1.1} for some $p\in(0,\infty)$,
$\{B_j\}_{j\in\nn}$ a sequence of balls of $\mathbb{X}$ satisfying that there exist $c\in(0,1]$ and
$M_0\in\nn$ such that $\sum_{j\in\nn}\mathbf{1}_{cB_j}\leq M_0$, and $s_0\in(0,\infty)$. Then, by Lemma \ref{le2.1}
and the assumption that $X^{\frac{1}{s_0}}(\mathbb{X})$ is a BBF space, we know that
\begin{align*}
\Bigg\|\Bigg(\sum_{j\in\nn}\mathbf{1}_{B_j}\Bigg)^{1/\min\{s_0,1\}}\Bigg\|_{X(\mathbb{X})}
&=\bigg\|\sum_{j\in\nn}\mathbf{1}_{B_j}\bigg\|^{1/\min\{s_0,1\}}_{X^{1/\min\{s_0,1\}}(\mathbb{X})}
\lesssim\bigg\|\sum_{j\in\nn}\mathbf{1}_{cB_j}\bigg\|^{1/\min\{s_0,1\}}_{X^{1/\min\{s_0,1\}}(\mathbb{X})}\\
&\sim\Bigg\|\Bigg(\sum_{j\in\nn}\mathbf{1}_{cB_j}\Bigg)^{1/\min\{s_0,1\}}\Bigg\|_{X(\mathbb{X})}
\lesssim\Bigg\|\sum_{j\in\nn}\mathbf{1}_{cB_j}\Bigg\|_{X(\mathbb{X})}\\
&\lesssim\Bigg\|\sum_{j\in\nn}\mathbf{1}_{B_j}\Bigg\|_{X(\mathbb{X})},
\end{align*}
where the equivalent positive constants are independent of $\{B_j\}_{j\in\nn}$.
\end{remark}

Next, we introduce the weak BQBF space on $\mathbb{X}$ (see, for instance, \cite[Definition 2.15]{syy22b}).
\begin{definition}\label{de2.1}
Let $X(\mathbb{X})$ be a ${\rm{BQBF}}$ space. Then the {\it weak ball quasi-Banach function
space} (for short, weak ${\rm{BQBF}}$ space) $WX(\mathbb{X})$ associated with $\mathbb{X}$ is defined to be
the set of all the $\mu$-measurable functions $f$ on $\mathbb{X}$ such that
$$\|f\|_{WX(\mathbb{X})}:=\sup_{\lambda\in(0,\infty)}\lf\{\lambda
\lf\|\mathbf{1}_{\{x\in\mathbb{X}: \ |f(x)|>\lambda\}}\r\|_{X(\mathbb{X})}\r\}<\infty.$$
\end{definition}

By \cite[Proposition 2.16]{syy22b}, we know that the weak BQBF space is also a BQBF space and, moreover,
the BQBF space $X(\mathbb{X})$ continuously embeds into
$WX(\mathbb{X})$, namely, if $f\in X(\mathbb{X})$, then $f\in WX(\mathbb{X})$ and
$\|f\|_{WX(\mathbb{X})}\leq\|f\|_{X(\mathbb{X})}$. Furthermore, let $p\in(0,\infty)$, $W(X^p)(\mathbb{X})$ the weak BQBF
space associated with $X^p(\mathbb{X})$, and $(WX)^p(\mathbb{X})$ the $p$-convexification of $WX(\mathbb{X})$. Notice that, for any $f\in\mathfrak{U}(\mathbb{X})$,
\begin{align*}
\|f\|_{W(X^p)(\mathbb{X})}
:&=\sup_{\lambda\in(0,\infty)}\lf\{\lambda
\lf\|\mathbf{1}_{\{x\in\mathbb{X}: \ |f(x)|>\lambda\}}\r\|_{X^p(\mathbb{X})}\r\}\\
&=\sup_{\lambda\in(0,\infty)}\lf\{\lambda^{\frac1{p}}
\lf\|\mathbf{1}_{\{x\in\mathbb{X}: \ |f(x)|>\lambda\}}\r\|^{\frac1{p}}_{X(\mathbb{X})}\r\}\\
&=:\|f\|_{(WX)^p(\mathbb{X})}.
\end{align*}
Thus, we obtain $W(X^p)(\mathbb{X})=(WX)^p(\mathbb{X})$ with equivalent quasi-norms. In what follows, we use $WX^p(\mathbb{X})$
to denote either $W(X^p)(\mathbb{X})$ or $(WX)^p(\mathbb{X})$ according to our convenience.

To prove Theorem \ref{th1.3}, we also need the following weak-type Fefferman-Stein vector-valued
maximal inequality, which is just \cite[Theorem 4.4]{syy22b}.
\begin{lemma}\label{le2.3}
Let $X(\mathbb{X})$ be a ${\rm{BQBF}}$ space satisfying Assumption \ref{as1.1} for some $p\in(0,\infty)$.
Then, for any given $t\in(0,p)$ and $u\in(1,\infty)$, there exists a positive constant $C$ such that, for any
$\{f_j\}_{j\in\nn}\subset\mathfrak{U}(\mathbb{X})$,
$$\lf\|\lf\{\sum_{j\in\nn}\lf[\mathcal{M}\lf(f_j\r)\r]^u\r\}^{\frac1{u}}\r\|_{WX^{\frac1{t}}(\mathbb{X})}
\leq C\lf\|\lf(\sum_{j\in\nn}\lf|f_j\r|^u\r)^{\frac1{u}}\r\|_{WX^{\frac1{t}}(\mathbb{X})}.$$
\end{lemma}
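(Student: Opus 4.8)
The plan is to deduce this weak-type bound from its strong-type counterpart, Assumption \ref{as1.1}, via a Calder\'{o}n--Zygmund decomposition carried out at each level; this is exactly \cite[Theorem 4.4]{syy22b}. Abbreviate $Y:=X^{1/t}$ and, for a sequence $\vec f:=\{f_j\}_{j\in\nn}$, write $F:=(\sum_{j\in\nn}|f_j|^u)^{1/u}$ and $\vec{\mathcal M}\vec f:=(\sum_{j\in\nn}[\mathcal M(f_j)]^u)^{1/u}$. Since $WX^{1/t}=WY$ and, by definition, $\|\vec{\mathcal M}\vec f\|_{WY}=\sup_{\lambda\in(0,\infty)}\lambda\|\mathbf{1}_{\{\vec{\mathcal M}\vec f>\lambda\}}\|_{Y}$, it suffices to show that, for every $\lambda\in(0,\infty)$,
$$\lambda\lf\|\mathbf{1}_{\{\vec{\mathcal M}\vec f>\lambda\}}\r\|_{Y}\ls A,\qquad\text{where }A:=\lf\|F\r\|_{WY}.$$
First I would fix exponents $\theta_0\in(t/p,1)$ and $\theta_1\in(1,\infty)$, which is possible because $t<p$. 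Since then $t/\theta_0,\,t/\theta_1\in(0,p)$ and $(X^{1/t})^{\theta}=X^{1/(t/\theta)}$, Assumption \ref{as1.1}, applied with $t$ replaced by $t/\theta_0$ and by $t/\theta_1$ (and the same $u$), shows that $\vec{\mathcal M}$ is bounded on both $Y^{\theta_0}$ and $Y^{\theta_1}$.

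Next, for fixed $\lambda$ I would split $f_j=g_j+h_j$ with $g_j:=f_j\mathbf{1}_{\{F\le c\lambda\}}$ and $h_j:=f_j\mathbf{1}_{\{F>c\lambda\}}$ for a small constant $c$; the associated dominating functions are $G:=F\mathbf{1}_{\{F\le c\lambda\}}$ and $H:=F\mathbf{1}_{\{F>c\lambda\}}$, so $0\le G\le c\lambda$, $\supp H\subset\{F>c\lambda\}$, and $G,H\le F$. By the sublinearity of $\vec{\mathcal M}$ we have $\vec{\mathcal M}\vec f\le\vec{\mathcal M}\vec g+\vec{\mathcal M}\vec h$, so $\{\vec{\mathcal M}\vec f>\lambda\}\subset\{\vec{\mathcal M}\vec g>\lambda/2\}\cup\{\vec{\mathcal M}\vec h>\lambda/2\}$. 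On $\{\vec{\mathcal M}\vec g>\lambda/2\}$ one has $(2\vec{\mathcal M}\vec g/\lambda)^{\theta_1}\ge1$, while on $\{\vec{\mathcal M}\vec h>\lambda/2\}$ one has $(2\vec{\mathcal M}\vec h/\lambda)^{\theta_0}\ge1$; combining these with the quasi-triangle inequality of $Y$, the identity $\||h|^{\theta}\|_{Y}=\|h\|_{Y^{\theta}}^{\theta}$, and the boundedness of $\vec{\mathcal M}$ on $Y^{\theta_1}$ and $Y^{\theta_0}$ yields
$$\lambda\lf\|\mathbf{1}_{\{\vec{\mathcal M}\vec f>\lambda\}}\r\|_{Y}\ls\lambda^{1-\theta_1}\lf\|G\r\|_{Y^{\theta_1}}^{\theta_1}+\lambda^{1-\theta_0}\lf\|H\r\|_{Y^{\theta_0}}^{\theta_0}.$$

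It therefore remains to prove the two Calder\'{o}n--Zygmund-piece estimates $\|G\|_{Y^{\theta_1}}^{\theta_1}\ls A\lambda^{\theta_1-1}$ and $\|H\|_{Y^{\theta_0}}^{\theta_0}\ls A\lambda^{\theta_0-1}$, and I expect this to be the only genuinely technical step, because $Y$ is merely a quasi-Banach function space with no rearrangement invariance, so the usual layer-cake formula is unavailable. To handle it I would decompose the level sets of $F$ dyadically, $E_k:=\{2^k<F\le2^{k+1}\}$ for $k\in\zz$, use the pointwise bounds $G^{\theta_1}\le\sum_{k:\,2^k<c\lambda}2^{(k+1)\theta_1}\mathbf{1}_{E_k}$ and $H^{\theta_0}\le\sum_{k:\,2^k>c\lambda/2}2^{(k+1)\theta_0}\mathbf{1}_{E_k}$, the estimate $\|\mathbf{1}_{E_k}\|_{Y}\le\|\mathbf{1}_{\{F>2^k\}}\|_{Y}\le A2^{-k}$ coming directly from the definition of $\|\cdot\|_{WY}$, and the Aoki--Rolewicz (quasi-triangle) inequality in $Y$ to sum the two resulting geometric series, which converge precisely because $\theta_1>1$ and $\theta_0<1$. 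Collecting the pieces gives $\lambda\|\mathbf{1}_{\{\vec{\mathcal M}\vec f>\lambda\}}\|_Y\ls A$ for all $\lambda\in(0,\infty)$, and the supremum over $\lambda$ finishes the proof; alternatively, one may invoke a real-interpolation identification of $WX^{1/t}$ as a space lying between $X^{1/(t/\theta_0)}$ and $X^{1/(t/\theta_1)}$, but the hands-on version above is more self-contained. For the complete details we refer the reader to \cite[Theorem 4.4]{syy22b}.
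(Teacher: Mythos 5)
Your argument is correct. In the paper, Lemma \ref{le2.3} is not proved at all: it is simply identified as \cite[Theorem 4.4]{syy22b}, so there is no in-paper proof to compare against. Your reconstruction is the standard Marcinkiewicz-type passage from the strong-type Fefferman--Stein inequality (Assumption \ref{as1.1}) to its weak-type counterpart, and it holds up under scrutiny: the identity $(X^{1/t})^{\theta}=X^{1/(t/\theta)}$ makes Assumption \ref{as1.1} applicable at the two auxiliary indices $t/\theta_0,\,t/\theta_1\in(0,p)$; the Minkowski inequality in $\ell^u$ (using $u\ge1$) gives the sublinearity $\vec{\mathcal M}\vec f\le\vec{\mathcal M}\vec g+\vec{\mathcal M}\vec h$; the layer-cake bound $\|\mathbf 1_{E_k}\|_{Y}\le A2^{-k}$ follows directly from the definition of the weak quasi-norm; and the Aoki--Rolewicz inequality (valid in any quasi-normed BQBF space) lets you sum the two geometric series, which converge because $\theta_1>1$ and $\theta_0<1$. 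One small remark: the cutoff constant $c$ plays no role and can be taken to be $1$; and the decomposition you use is a level-set split, not a genuine Calder\'on--Zygmund (stopping-time/cube) decomposition, so the phrase is slightly loose, but the argument itself is exactly the one expected to underlie the cited theorem.
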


\subsection{(Weak) Hardy Spaces Associated BQBF Spaces and Non-negative Self-adjoint Operators}
\hskip\parindent
In this subsection, we will recall the definition of the Hardy space $H_{X,L}(\mathbb{X})$, and introduce the weak Hardy space $WH_{X,L}(\mathbb{X})$ associated with the BQBF space $X$  and the non-negative self-adjoint operator $L$.
\begin{definition}\label{de2.2}
Let $WX(\mathbb{X})$ be a weak ${\rm{BQBF}}$ space and $L$ a non-negative self-adjoint operator on $L^2(\mathbb{X})$ satisfying the Davies-Gaffney estimate \eqref{e1.4}. For any $f\in L^2(\mathbb{X})$ and $x\in\mathbb{X}$, the {\it Lusin area function}
$S_{L}(f)$ associated with $L$ is defined by setting
\begin{equation*}
S_{L}(f)(x):=\lf[\int^\infty_0\int_{B(x,t)}
\lf|t^2 L e^{-t^2 L}(f)(y)\r|^2\frac{d\mu(y)dt}{V(x,t)t}\r]^{\frac1{2}}.
\end{equation*}
Then the {\it Hardy space} $H_{X,L}(\mathbb{X})$, associated with $L$ and $X$,
is defined as the completion of the set
$$\mathbb{H}_{X,L}(\mathbb{X}):=\lf\{f\in L^2(\mathbb{X}):
\ \|f\|_{H_{X,L}(\mathbb{X})}:=\big\|S_{L}(f)\big\|_{X(\mathbb{X})}<\infty\r\}.$$
Moreover, the {\it weak Hardy space} $WH_{X,L}(\mathbb{X})$, associated with $L$ and $X$,
is defined as the completion of the set
$$\mathbb{WH}_{X,L}(\mathbb{X}):=\lf\{f\in L^2(\mathbb{X}):
\ \|f\|_{WH_{X,L}(\mathbb{X})}:=\big\|S_{L}(f)\big\|_{WX(\mathbb{X})}<\infty\r\}.$$
\end{definition}

\begin{remark}\label{re2.2}
Let $\Psi\in C_{\mathrm{c}}^\infty(\mathbb{R})$ be an even function such that $\supp(\Psi)\subset\{\xi:\frac1{4}\leq|\xi|\leq4\}$
and $\Psi=1$ on $\{\xi:\frac1{2}\leq|\xi|\leq2\}$. Define
\begin{equation*}
S_{L,\Psi}(f)(x):=\lf[\int^\infty_0\int_{B(x,t)}
\lf|\Psi\lf(t\sqrt{L}\r)(f)(y)\r|^2\frac{d\mu(y)dt}{V(x,t)t}\r]^{\frac1{2}}.
\end{equation*}
Through careful examination of the proofs in \cite{hlmmy11,jy11a}, it is established that the weak Hardy space
$WH_{X,L}(\mathbb{X})$ admits via $S_{L,\Psi}$ rather than $S_{L}$.
\end{remark}

\subsection{Weak Molecular and Atomic
Hardy Spaces}
\hskip\parindent
Now we will introduce the weak molecular Hardy space $WH^{M,\varepsilon}_{X,L,\mathrm{mol}}(\mathbb{X})$ and the weak atomic
Hardy space $WH^M_{X,L,\mathrm{atom}}(\mathbb{X})$ associated with $L$ and $X$ as follows.
\begin{definition}\label{de2.3}
Let $M\in\nn$, $\varepsilon\in(0,\infty)$, and $X(\mathbb{X})$ be a ${\rm{BQBF}}$ space satisfying Assumption \ref{as1.2}
for some $s_0\in(0,\infty)$ and $q_0\in(s_0,\infty)$. Assume that $L$ is a non-negative self-adjoint operator on
$L^2(\mathbb{X})$ satisfying the Davies-Gaffney estimate \eqref{e1.4}.
\begin{enumerate}
\item[\rm{(i)}] A measurable function $m\in L^2(\mathbb{X})$ is called an \emph{$(X,M,\varepsilon)_{L}$-molecule} associated with the ball $B:=B(x_B,r_B)$ of $\mathbb{X}$ for some $x_B\in\mathbb{X}$ and $r_B\in(0,\infty)$
if there exists a function $b\in\mathcal{D}(L^M)$, where $\mathcal{D}(L^M)$ denote the domian of $L^M$, such that $m=L^Mb$ and
\begin{align}\label{e2.1}
\lf\|\lf(r^2_BL\r)^kb\r\|_{L^2(U_j(B))}\le2^{-j\varepsilon}
r^{2M}_B\lf[\mu\lf(2^jB\r)\r]^{\frac1{2}}\lf\|\mathbf{1}_B\r\|^{-1}_{X(\mathbb{X})}
\end{align}
for any $k\in\{0,1,\ldots,M\}$ and $j\in\zz_+$.
\item[\rm{(ii)}] Assume that $\{m_{i,j}\}_{i\in\zz,j\in\nn}$ is a family of $(X,M,\varepsilon)_{L}$-molecules associated with the ball $\{B_{i,j}\}_{i\in\zz,j\in\nn}$ of $\mathbb{X}$ and numbers $\{\lambda_{i,j}\}_{i\in\zz,j\in\nn}$ satisfying
\begin{enumerate}
\item[\rm{(ii)}$_1$] for any $i\in\zz$ and $j\in\nn$, $\lambda_{i,j}:=2^i\|\mathbf{1}_{B_{i,j}}\|_{X(\mathbb{X})}$;
\item[\rm{(ii)}$_2$] there exists a positive constant $C$ such that
$$\sup_{i\in\zz}\Lambda\lf(\lf\{\lambda_{i,j}m_{i,j}\r\}_{i\in\zz,j\in\nn}\r):=
\sup_{i\in\zz}\lf\|\lf\{\sum_{j\in\nn}\lf[\frac{\lambda_{i,j}\mathbf{1}_{B_{i,j}}}
{\|\mathbf{1}_{B_{i,j}}\|_{X(\mathbb{X})}}\r]^{s_0}\r\}^{\frac1{s_0}}\r\|_{X(\mathbb{X})}<C;$$
\item[\rm{(ii)}$_3$] there exists a positive constant $C\in(0,1]$ such that, for any $x\in\mathbb{X}$ and $i\in\zz$,
$\sum_{j\in\nn}\mathbf{1}_{CB_{i,j}}(x)\leq M_0$, where $M_0$ being a positive constant independent of $x$ and $i$.
\end{enumerate}
Then, for any $f\in L^2(\mathbb{X})$, $f=\sum_{i\in\zz}\sum_{j\in\nn}\lambda_{i,j}m_{i,j}$
is called a {\it weak molecular $(X,M,\varepsilon)_{L}$-representation} of $f$ if the above summation converges in
$L^2(\mathbb{X})$. The {\it weak molecular Hardy space} $WH^{M,\varepsilon}_{X,L,\mathrm{mol}}(\mathbb{X})$ is defined
as the completion of the set
$$\mathbb{WH}^{M,\varepsilon}_{X,L,\mathrm{mol}}(\mathbb{X}):=\lf\{f\in L^2(\mathbb{X}):
\ f\ \mathrm{has\ a\ weak\ molecular}\ (X,M,\varepsilon)_{L}\emph{-} \mathrm{representation}\r\}$$
with respect to the {\it quasi-norm}
$$\|f\|_{WH^{M,\varepsilon}_{X,L,\mathrm{mol}}(\mathbb{X})}:
=\inf\lf\{\sup_{i\in\zz}\Lambda\lf(\lf\{\lambda_{i,j}m_{i,j}\r\}_{i\in\zz,j\in\nn}\r)\r\},$$
where the infimum is taken over all weak molecular $(X,M,\varepsilon)_{L}$-representations of $f$ as above.
\end{enumerate}
\end{definition}

\begin{definition}\label{de2.4}
Let $M\in\nn$ and $X(\mathbb{X})$ be a ${\rm{BQBF}}$ space satisfying Assumption \ref{as1.2}
for some $s_0\in(0,\infty)$ and $q_0\in(s_0,\infty)$. Assume that $L$ is a non-negative self-adjoint operator on
$L^2(\mathbb{X})$ satisfying the Davies-Gaffney estimate \eqref{e1.4}.
A measurable function $a\in L^2(\mathbb{X})$ is called an \emph{$(X,M)_{L}$-atom} associated with the ball $B:=B(x_B,r_B)\subset\mathbb{X}$ for some $x_B\in\mathbb{X}$ and $r_B\in(0,\infty)$
if there exists a function $b\in\mathcal{D}(L^M)$ such that $a=L^M(b)$, $\supp(L^k(b))\subset B$, and
\begin{align}\label{e2.2}
\lf\|\lf(r^2_BL\r)^kb\r\|_{L^2(\mathbb{X})}\le
r^{2M}_B\lf[\mu(B)\r]^{\frac1{2}}\lf\|\mathbf{1}_B\r\|^{-1}_{X(\mathbb{X})}
\end{align}
for any $k\in\{0,1,\ldots,M\}$. In a similar way, the set $\mathbb{WH}^M_{X,L,\mathrm{atom}}(\mathbb{X})$
and the weak atomic Hardy space $WH^M_{X,L,\mathrm{atom}}(\mathbb{X})$ are defined in the some way, respectively, as
$\mathbb{WH}^{M,\varepsilon}_{X,L,\mathrm{mol}}(\mathbb{X})$ and $WH^{M,\varepsilon}_{X,L,\mathrm{mol}}(\mathbb{X})$
with $(X,M,\varepsilon)_{L}$-molecules replaced by $(X,M)_{L}$-atoms.
\end{definition}

\section{Proofs of Theorem \ref{th1.2} and Corollary \ref{co1.2}}\label{s4}
\hskip\parindent
In this section, we give the proofs of Theorem \ref{th1.2} and Corollary \ref{co1.2}.
To show Theorem \ref{th1.2}, we need the following useful conclusion (see, for instance, \cite[Lemma 4.5]{lyyy24}).
\begin{lemma}\label{le4.1}
Let $t\in(0,\infty)$ and $0<\alpha<\beta<\infty$. Then
$$\sum_{\ell\in\zz}\lf(2^\ell t\r)^{-\alpha}\min\lf\{1,\lf(2^\ell t\r)^\beta\r\}
\leq\frac{1}{1-2^{\alpha-\beta}}+\frac{1}{1-2^{-\alpha}}.$$
\end{lemma}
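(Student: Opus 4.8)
The plan is to split the sum over $\ell\in\zz$ into two halves according to whether $2^\ell t\le 1$ or $2^\ell t>1$, since the factor $\min\{1,(2^\ell t)^\beta\}$ behaves differently in each regime. Concretely, let $\ell_0:=\ell_0(t)\in\zz$ be chosen so that $2^{\ell_0}t\le 1<2^{\ell_0+1}t$; such an integer exists for every $t\in(0,\infty)$. For $\ell\le\ell_0$ we have $2^\ell t\le 1$, so $\min\{1,(2^\ell t)^\beta\}=(2^\ell t)^\beta$ and the $\ell$-th term equals $(2^\ell t)^{\beta-\alpha}$. For $\ell\ge\ell_0+1$ we have $2^\ell t>1$, so $\min\{1,(2^\ell t)^\beta\}=1$ and the $\ell$-th term equals $(2^\ell t)^{-\alpha}$.

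For the first half, write $s:=2^{\ell_0}t\le 1$ and reindex by $k:=\ell_0-\ell\ge 0$, so
\begin{align*}
\sum_{\ell\le\ell_0}\lf(2^\ell t\r)^{\beta-\alpha}
=\sum_{k=0}^{\infty}\lf(2^{-k}s\r)^{\beta-\alpha}
=s^{\beta-\alpha}\sum_{k=0}^{\infty}2^{-k(\beta-\alpha)}
\le\sum_{k=0}^{\infty}\lf(2^{-(\beta-\alpha)}\r)^{k}
=\frac{1}{1-2^{\alpha-\beta}},
\end{align*}
where we used $s^{\beta-\alpha}\le 1$ (valid since $s\le1$ and $\beta-\alpha>0$) and the geometric series converges because $\beta-\alpha>0$. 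For the second half, write $s':=2^{\ell_0+1}t>1$ and reindex by $k:=\ell-(\ell_0+1)\ge 0$, so
\begin{align*}
\sum_{\ell\ge\ell_0+1}\lf(2^\ell t\r)^{-\alpha}
=\sum_{k=0}^{\infty}\lf(2^{k}s'\r)^{-\alpha}
=(s')^{-\alpha}\sum_{k=0}^{\infty}2^{-k\alpha}
\le\sum_{k=0}^{\infty}\lf(2^{-\alpha}\r)^{k}
=\frac{1}{1-2^{-\alpha}},
\end{align*}
using $(s')^{-\alpha}\le 1$ (valid since $s'>1$ and $\alpha>0$) and convergence because $\alpha>0$. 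Adding the two bounds gives the claimed inequality.

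There is no serious obstacle here; the only point requiring a small amount of care is the choice of the threshold index $\ell_0$ and checking that the bounding constants $s^{\beta-\alpha}$ and $(s')^{-\alpha}$ are indeed $\le 1$ with the sign conventions coming from $0<\alpha<\beta<\infty$. Everything else is a routine pair of geometric-series estimates, and the bound is uniform in $t\in(0,\infty)$ as required.
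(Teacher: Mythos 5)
Your proof is correct and is the standard argument for this elementary estimate; the paper itself does not prove Lemma~\ref{le4.1} but merely cites it (to \cite[Lemma 4.5]{lyyy24}), and the cited proof proceeds by the same split at the threshold $2^{\ell}t\le 1$ versus $2^{\ell}t>1$ followed by two geometric series. Nothing to add.
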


Now, we show Theorem \ref{th1.2} via using Theorem \ref{th1.1}, Remark \ref{re2.2}, and Lemma \ref{le4.1}.
\begin{proof}[Proof of Theorem \ref{th1.2}]
Let $p\in(0,\infty)$, $s_0\in(0,\min\{p,1\}]$, $0<\gamma\neq1$,
$\beta\in[\gamma n(\frac{1}{s_0}-\frac{1}{2}),\infty)$, $k_0>\frac{\beta}{\gamma}$,
and $M>\max\{\frac{n}{2}(\frac{1}{s_0}-\frac{1}{2}),\frac{k_0}{2}\}$.
Moreover, assume that $\Psi\in C_c^\infty(\mathbb{R})$ is an even function such that $\supp(\Psi)\subset\{\xi:\frac1{4}\leq|\xi|\leq4\}$
and $\Psi=1$ on $\{\xi:\frac1{2}\leq|\xi|\leq2\}$. Without any confusion, we still denote
\begin{equation*}
S_{L}(f)(x):=\lf[\int^\infty_0\int_{d(x,y)<t}
\lf|\Psi\lf(t\sqrt{L}\r)(f)(y)\r|^2\frac{d\mu(y)dt}{V(x,t)t}\r]^{\frac1{2}}.
\end{equation*}
Recall that $\beta\in[\gamma n(\frac{1}{s_0}-\frac{1}{2}),\infty)$ and
$F_\tau(\xi)=(1+\xi^2)^{-\beta/2}e^{i\tau|\xi|^{\gamma/2}}$ such that
$F_\tau(\sqrt{L})=(I+L)^{-\beta/2}e^{i\tau L^{\gamma/2}}$.
Thus, by Remark \ref{re2.2}, to show Theorem \ref{th1.2}, it suffices to prove that,
for any $\tau\in \mathbb{R}$ and $f\in WH_{X,L}(\mathbb{X})\cap L^2(\mathbb{X})$,
\begin{align}\label{e4.1}
\lf\|S_L\lf(F_\tau\lf(\sqrt{L}\r)f\r)\r\|_{WX(\mathbb{X})}\lesssim \lf(1+|\tau|\r)^{n(\frac{1}{s_0}-\frac{r}{2})}\|f\|_{WH_{X,L}(\mathbb{X})}.
\end{align}

Let $f\in WH_{X,L}(\mathbb{X})\cap L^2(\mathbb{X})$. By Theorem \ref{th1.1}, we know that there exists a sequence
$\{a_{i,j}\}_{i\in\zz,j\in\nn}$ of $(X,M)_{L}$-atoms associated with the balls
$\{B_{i,j}\}_{i\in\zz,j\in\nn}$ such that
\begin{align}\label{e4.2}
f=\sum_{i\in\zz}\sum_{j\in\nn}\lambda_{i,j}a_{i,j} \ \ {\rm{in}} \ \ L^2(\mathbb{X}),
\end{align}
where $\lambda_{i,j}:=2^i\|\mathbf{1}_{B_{i,j}}\|_{X(\mathbb{X})}$ for each $i\in\zz$ and $j\in\nn$, and
\begin{align}\label{e4.3}
\sup_{i\in\zz}\lf\|\lf\{\sum_{j\in\nn}\lf[\frac{\lambda_{i,j}\mathbf{1}_{B_{i,j}}}
{\|\mathbf{1}_{B_{i,j}}\|_{X(\mathbb{X})}}\r]^{s_0}\r\}^{\frac1{s_0}}\r\|_{X(\mathbb{X})}
\lesssim\|f\|_{WH_{X,L}(\mathbb{X})}.
\end{align}
From \eqref{e4.2} and the fact that $S_L$ is bounded on $L^2(\mathbb{X})$, for almost every $x\in\mathbb{X}$,
\begin{align}\label{e4.4}
S_L\lf(F_\tau\lf(\sqrt{L}\r)f\r)(x)\leq\sum_{i\in\zz}\sum_{j\in\nn}\lambda_{i,j}S_L\lf(F_\tau\lf(\sqrt{L}\r)a_{i,j}\r)(x).
\end{align}
Moreover, it is easy to know that, for any $i\in\zz$ and $j\in\nn$,
$$I=\lf(I-e^{-r^2_{B_{i,j}}L}\r)^M+\sum^M_{k=1}(-1)^{k+1}C^k_Me^{-kr^2_{B_{i,j}}L}
=:\lf(I-e^{-r^2_{B_{i,j}}L}\r)^M+P\lf(r^2_{B_{i,j}}L\r),$$
where $I$ denotes the identity operator on $L^2(\mathbb{X})$ and, for any
$k\in\{0,1,\ldots,M\}$, $C^k_M:=\frac{M!}{k!(M-k)!}$, which implies that, for any $i\in\zz$, $j\in\nn$, and $x\in\mathbb{X}$,
\begin{align*}
S_L\lf(F_\tau\lf(\sqrt{L}\r)a_{i,j}\r)(x)&\leq S_L\lf(\lf[I-e^{-r^2_{B_{i,j}}L}\r]^MF_\tau\lf(\sqrt{L}\r)a_{i,j}\r)(x)\\
&\hs+S_L\lf(P\lf(r^2_{B_{i,j}}L\r)F_\tau\lf(\sqrt{L}\r)a_{i,j}\r)(x).
\end{align*}
From this, \eqref{e4.4}, and the definition of $WX(\mathbb{X})$, we deduce that
\begin{align}\label{e4.5}
\lf\|S_L\lf(F_\tau\lf(\sqrt{L}\r)f\r)\r\|_{WX(\mathbb{X})}
&=\sup_{\alpha\in(0,\infty)}\alpha\lf\|\mathbf{1}_{\{x\in\mathbb{X}: \ S_L(F_\tau(\sqrt{L})f)(x)>\alpha\}}\r\|_{X(\mathbb{X})}\nonumber\\
&\leq\sup_{\alpha\in(0,\infty)}\alpha\lf\|\mathbf{1}_{\{x\in\mathbb{X}: \ \sum_{i=-\infty}^{i_0-1}\sum_{j\in\nn}\lambda_{i,j}S_L([I-e^{-r^2_{B_{i,j}}L}]^M
F_\tau(\sqrt{L})a_{i,j})(x)>\frac{\alpha}{4}\}}\r\|_{X(\mathbb{X})}\nonumber\\
&\hs+\sup_{\alpha\in(0,\infty)}\alpha\lf\|\mathbf{1}_{\{x\in\mathbb{X}: \ \sum_{i=i_0}^\infty\sum_{j\in\nn}\lambda_{i,j}S_L([I-e^{-r^2_{B_{i,j}}L}]^M
F_\tau(\sqrt{L})a_{i,j})(x)>\frac{\alpha}{4}\}}\r\|_{X(\mathbb{X})}\nonumber\\
&\hs+\sup_{\alpha\in(0,\infty)}\alpha\lf\|\mathbf{1}_{\{x\in\mathbb{X}: \ \sum_{i=-\infty}^{i_0-1}\sum_{j\in\nn}\lambda_{i,j}S_L(P(r^2_{B_{i,j}}L)
F_\tau(\sqrt{L})a_{i,j})(x)>\frac{\alpha}{4}\}}\r\|_{X(\mathbb{X})}\nonumber\\
&\hs+\sup_{\alpha\in(0,\infty)}\alpha\lf\|\mathbf{1}_{\{x\in\mathbb{X}: \ \sum_{i=i_0}^\infty\sum_{j\in\nn}\lambda_{i,j}S_L(P(r^2_{B_{i,j}}L)
F_\tau(\sqrt{L})a_{i,j})(x)>\frac{\alpha}{4}\}}\r\|_{X(\mathbb{X})}\nonumber\\
&=:\sup_{\alpha\in(0,\infty)}\alpha\lf[{\rm{I_{1}+I_{2}+I_{3}+I_{4}}}\r].
\end{align}

Next, we show that, for any $(X,M)_{L}$-atom $a$, associated with the ball $B:=B(x_B,r_B)$ of $\mathbb{X}$ for some $x_B\in\mathbb{X}$ and $r_B\in(0,\infty)$, and any $k\in\zz_+$,
\begin{align}\label{e4.6}
\lf\|S_L\lf(\lf[I-e^{-r^2_{B}L}\r]^MF_\tau\lf(\sqrt{L}\r)a\r)\r\|_{L^2(U_k(B^\tau))}\ls2^{\frac{-k\beta}{\gamma}}
\lf[\mu\lf(B\r)\r]^{\frac1{2}}\lf\|\mathbf{1}_B\r\|^{-1}_{X(\mathbb{X})}
\end{align}
and
\begin{align}\label{e4.7}
\lf\|S_L\lf(P\lf(r^2_{B}L\r)F_\tau\lf(\sqrt{L}\r)a\r)\r\|_{L^2(U_k(B^\tau))}\ls2^{\frac{-k\beta}{\gamma}}
\lf[\mu\lf(B\r)\r]^{\frac1{2}}\lf\|\mathbf{1}_B\r\|^{-1}_{X(\mathbb{X})},
\end{align}
where $B^\tau:=(1+|\tau|)B$.

We first show \eqref{e4.6}. When $k\in\{0,1\}$, by the functional calculi associated with $L$ and \eqref{e2.2},
we know that
\begin{align}\label{e4.8}
\lf\|S_L\lf(\lf[I-e^{-r^2_{B}L}\r]^MF_\tau\lf(\sqrt{L}\r)a\r)\r\|_{L^2(U_k(B^\tau))}
\ls\|a\|_{L^2(\mathbb{X})}
\lesssim\lf[\mu\lf(B\r)\r]^{\frac1{2}}\lf\|\mathbf{1}_B\r\|^{-1}_{X(\mathbb{X})}.
\end{align}
When $k\in[2,\infty)\cap\nn$, for any $\lambda\in(0,\infty)$, setting
$$F_{t,\tau,r_B}(\lambda):=\Psi(t\lambda)\lf(I-e^{-r^2_{B}\lambda^2}\r)^MF_\tau(\lambda).$$
By this, $U_{k,t}(B^\tau):=\{x\in\mathbb{X}:\ d(x,U_{k}(B^\tau))\leq t\}$, and the fact that
\begin{align}\label{e4.9}
\int_{d(x,y)<t}\frac{d\mu(x)}{V(x,t)}\lesssim1,
\end{align}
we conclude that
\begin{align}\label{e4.10}
&\lf\|S_L\lf(\lf[I-e^{-r^2_{B}L}\r]^MF_\tau\lf(\sqrt{L}\r)a\r)\r\|^2_{L^2(U_k(B^\tau))}\nonumber\\
&\hs=\lf\|\lf[\int^\infty_0\int_{d(x,y)<t}\lf|\Psi\lf(t\sqrt{L}\r)\lf[I-e^{-r^2_{B}L}\r]^MF_\tau\lf(\sqrt{L}\r)a(y)\r|^2
\frac{d\mu(y)dt}{V(x,t)t}\r]^{\frac1{2}}\r\|^2_{L^2(U_k(B^\tau))}\nonumber\\
&\hs=\lf\|\lf[\int^\infty_0\int_{d(x,y)<t}\lf|F_{t,\tau,r_B}\lf(\sqrt{L}\r)a(y)\r|^2
\frac{d\mu(y)dt}{V(x,t)t}\r]^{\frac1{2}}\r\|^2_{L^2(U_k(B^\tau))}\nonumber\\
&\hs=\int_{U_k(B^\tau)}\int^\infty_0\int_{d(x,y)<t}\lf|F_{t,\tau,r_B}\lf(\sqrt{L}\r)a(y)\r|^2
\frac{d\mu(y)dt}{V(x,t)t}d\mu(x)\nonumber\\
&\hs\leq\int^\infty_0\int_{U_{k,t}(B^\tau)}\int_{d(x,y)<t}\frac{d\mu(x)}{V(x,t)}
\lf|F_{t,\tau,r_B}\lf(\sqrt{L}\r)a(y)\r|^2\frac{dt}{t}d\mu(y)\nonumber\\
&\hs\lesssim\int^\infty_0\int_{U_{k,t}(B^\tau)}
\lf|F_{t,\tau,r_B}\lf(\sqrt{L}\r)a(y)\r|^2d\mu(y)\frac{dt}{t}\nonumber\\
&\hs=\int^\infty_0\lf\|F_{t,\tau,r_B}\lf(\sqrt{L}\r)a\r\|^2_{L^2(U_{k,t}(B^\tau))}\frac{dt}{t}\nonumber\\
&\hs=\sum_{\ell\in\zz}\int^{2^{-\ell+1}}_{2^{-\ell}}
\lf\|F_{t,\tau,r_B}\lf(\sqrt{L}\r)a\r\|^2_{L^2(U_{k,t}(B^\tau))}\frac{dt}{t}\nonumber\\
&\hs\leq\sum_{\ell<0}\cdots+\sum_{0\leq\ell<\frac{k}{\gamma}}\cdots+\sum_{\ell\geq\frac{k}
{\gamma}}\cdots\nonumber\\
&\hs=:{\rm{II_1+II_2+II_3}}.
\end{align}
For the terms ${\rm{II_1}}$ and ${\rm{II_2}}$, by \cite[p.288]{bdd24}, we know that, for any $\ell\in\zz$, $k_0>\frac{\beta}{\gamma}$, and $g\in L^2(B)$,
$$\lf\|F_{t,\tau,r_B}\lf(\sqrt{L}\r)g\r\|_{L^2(U_{k,t}(B^\tau))}
\lesssim2^{-kk_0}\lf(2^\ell r_B\r)^{-k_0}\lf[1+2^{\gamma\ell(k_0-\frac{\beta}{\gamma})}\r]
\min\lf\{1,\lf(2^\ell r_B\r)^{2M}\r\}\|g\|_{L^2(B)}.$$
Applying this, Lemma \ref{le4.1} with $M>\frac{k_0}{2}$, \eqref{e2.2}, and $k_0>\frac{\beta}{\gamma}$, we conclude that
\begin{align}\label{e4.11}
{\rm{II_1}}&=\sum_{\ell<0}\int^{2^{-\ell+1}}_{2^{-\ell}}
\lf\|F_{t,\tau,r_B}\lf(\sqrt{L}\r)a\r\|^2_{L^2(U_{k,t}(B^\tau))}\frac{dt}{t}\nonumber\\
&\lesssim\sum_{\ell<0}\int^{2^{-\ell+1}}_{2^{-\ell}}2^{-2kk_0}\lf(2^\ell r_B\r)^{-2k_0}
\min\lf\{1,\lf(2^\ell r_B\r)^{4M}\r\}\|a\|^2_{L^2(B)}\frac{dt}{t}\nonumber\\
&\lesssim2^{-2kk_0}\sum_{\ell<0}\lf(2^\ell r_B\r)^{-2k_0}\min\lf\{1,\lf(2^\ell r_B\r)^{4M}\r\}\|a\|^2_{L^2(B)}\nonumber\\
&\lesssim2^{-2kk_0}\|a\|^2_{L^2(B)}
\ls\lf\{2^{\frac{-k\beta}{\gamma}}\lf[\mu\lf(B\r)\r]^{\frac1{2}}\lf\|\mathbf{1}_B\r\|^{-1}_{X(\mathbb{X})}\r\}^2
\end{align}
and
\begin{align}\label{e4.12}
{\rm{II_2}}&=\sum_{0\leq\ell<\frac{k}{\gamma}}\int^{2^{-\ell+1}}_{2^{-\ell}}
\lf\|F_{t,\tau,r_B}\lf(\sqrt{L}\r)a\r\|^2_{L^2(U_{k,t}(B^\tau))}\frac{dt}{t}\nonumber\\
&\lesssim\sum_{0\leq\ell<\frac{k}{\gamma}}\int^{2^{-\ell+1}}_{2^{-\ell}}2^{-2kk_0}\lf(2^\ell r_B\r)^{-2k_0}
2^{2\gamma\ell(k_0-\frac{\beta}{\gamma})}\min\lf\{1,\lf(2^\ell r_B\r)^{4M}\r\}\|a\|^2_{L^2(B)}\frac{dt}{t}\nonumber\\
&\lesssim2^{-2kk_0}2^{2k(k_0-\frac{\beta}{\gamma})}\sum_{0\leq\ell<\frac{k}{\gamma}}\lf(2^\ell r_B\r)^{-2k_0}\min\lf\{1,\lf(2^\ell r_B\r)^{4M}\r\}\|a\|^2_{L^2(B)}\nonumber\\
&\lesssim2^{\frac{-2k\beta}{\gamma}}\|a\|^2_{L^2(B)}
\ls\lf\{2^{\frac{-k\beta}{\gamma}}\lf[\mu\lf(B\r)\r]^{\frac1{2}}\lf\|\mathbf{1}_B\r\|^{-1}_{X(\mathbb{X})}\r\}^2.
\end{align}
For the term ${\rm{II_3}}$, by the fact that, for any $\ell\in\zz$,
$$\lf\|F_{t,\tau,r_B}\r\|_{L^\infty(0,\infty)}\lesssim2^{-\ell\beta}\min\lf\{1,\lf(2^\ell r_B\r)^{2M}\r\}$$
(see, for instance, \cite[p.287]{bdd24}), the functional calculi associated with $L$, and \eqref{e2.2}, we obtain that
\begin{align*}
{\rm{II_3}}&=\sum_{\ell\geq\frac{k}{\gamma}}\int^{2^{-\ell+1}}_{2^{-\ell}}
\lf\|F_{t,\tau,r_B}\lf(\sqrt{L}\r)a\r\|^2_{L^2(U_{k,t}(B^\tau))}\frac{dt}{t}\\
&\lesssim\sum_{\ell\geq\frac{k}{\gamma}}\int^{2^{-\ell+1}}_{2^{-\ell}}2^{-2\ell\beta}
\min\lf\{1,\lf(2^\ell r_B\r)^{4M}\r\}\|a\|^2_{L^2(B)}\frac{dt}{t}\\
&\lesssim\sum_{\ell\geq\frac{k}{\gamma}}2^{-2\ell\beta}\min\lf\{1,\lf(2^\ell r_B\r)^{4M}\r\}\|a\|^2_{L^2(B)}\\
&\lesssim2^{\frac{-2k\beta}{\gamma}}\|a\|^2_{L^2(B)}
\ls\lf\{2^{\frac{-k\beta}{\gamma}}\lf[\mu\lf(B\r)\r]^{\frac1{2}}\lf\|\mathbf{1}_B\r\|^{-1}_{X(\mathbb{X})}\r\}^2.
\end{align*}
From this, \eqref{e4.12}, \eqref{e4.11}, \eqref{e4.10}, and \eqref{e4.8}, we deduce that the claim \eqref{e4.6} holds true.

Now, we prove \eqref{e4.7}. For any $\ell\in\zz$ and $\lambda\in(0,\infty)$, let
$$G_{t,\tau,r_B}(\lambda):=\Psi(t\lambda)\lf(r^2_{B}\lambda^2\r)^MP\lf(r^2_{B}\lambda^2\r)F_\tau(\lambda).$$
By this, $a=L^Mb$, and \eqref{e4.9}, we know that
\begin{align}\label{e4.13}
&\lf\|S_L\lf(P\lf(r^2_{B}L\r)F_\tau\lf(\sqrt{L}\r)a\r)\r\|^2_{L^2(U_k(B^\tau))}\nonumber\\
&\hs=\lf\|S_L\lf(\lf(r^2_{B}L\r)^MP\lf(r^2_{B}L\r)F_\tau\lf(\sqrt{L}\r)r_B^{-2M}b\r)\r\|^2_{L^2(U_k(B^\tau))}\nonumber\\
&\hs=\lf\|\lf[\int^\infty_0\int_{d(x,y)<t}\lf|\Psi\lf(t\sqrt{L}\r)
\lf[\lf(r^2_{B}L\r)^MP\lf(r^2_{B}L\r)F_\tau\lf(\sqrt{L}\r)r_B^{-2M}b\r](y)\r|^2
\frac{d\mu(y)dt}{V(x,t)t}\r]^{\frac1{2}}\r\|^2_{L^2(U_k(B^\tau))}\nonumber\\
&\hs=\lf\|\lf[\int^\infty_0\int_{d(x,y)<t}\lf|G_{t,\tau,r_B}\lf(\sqrt{L}\r)r_B^{-2M}b(y)\r|^2
\frac{d\mu(y)dt}{V(x,t)t}\r]^{\frac1{2}}\r\|^2_{L^2(U_k(B^\tau))}\nonumber\\
&\hs=\int_{U_k(B^\tau)}\int^\infty_0\int_{d(x,y)<t}\lf|G_{t,\tau,r_B}\lf(\sqrt{L}\r)r_B^{-2M}b(y)\r|^2
\frac{d\mu(y)dt}{V(x,t)t}d\mu(x)\nonumber\\
&\hs\leq\int^\infty_0\int_{U_{k,t}(B^\tau)}\int_{d(x,y)<t}\frac{d\mu(x)}{V(x,t)}
\lf|G_{t,\tau,r_B}\lf(\sqrt{L}\r)r_B^{-2M}b(y)\r|^2\frac{dt}{t}d\mu(y)\nonumber\\
&\hs\lesssim\int^\infty_0\int_{U_{k,t}(B^\tau)}
\lf|G_{t,\tau,r_B}\lf(\sqrt{L}\r)r_B^{-2M}b(y)\r|^2d\mu(y)\frac{dt}{t}\nonumber\\
&\hs=r_B^{-4M}\int^\infty_0\lf\|G_{t,\tau,r_B}\lf(\sqrt{L}\r)b\r\|^2_{L^2(U_{k,t}(B^\tau))}\frac{dt}{t}.
\end{align}
Moreover, by checking the proof of \cite[Theorem 1.2]{bdd24}, we conclude that, for any $\ell\in\zz$, $k_0>\frac{\beta}{\gamma}$, and $g\in L^2(B)$,
$$\lf\|G_{t,\tau,r_B}\r\|_{L^\infty(0,\infty)}
\lesssim2^{-\ell\beta}\lf(2^\ell r_B\r)^{-2M}\min\lf\{1,\lf(2^\ell r_B\r)^{2M}\r\}$$
and
$$\lf\|G_{t,\tau,r_B}\lf(\sqrt{L}\r)g\r\|_{L^2(U_{k,t}(B^\tau))}
\lesssim2^{-kk_0}\lf(2^\ell r_B\r)^{-2M-k_0}\lf[1+2^{\gamma\ell(k_0-\frac{\beta}{\gamma})}\r]
\min\lf\{1,\lf(2^\ell r_B\r)^{2M}\r\}\|g\|_{L^2(B)}.$$
By this, \eqref{e4.13}, \eqref{e2.2}, and an argument similar to that used in the proof of \eqref{e4.6},
we conclude that the claim \eqref{e4.7} holds true.

Now, we deal with the term ${\rm{I_{1}}}$. Let $s_0\in(0,1]$, $q\in(1,\min\{\frac{1}{s_0},\frac{2}{q_0}\}]$,
and $a\in(0,1-\frac{1}{q})$. Then from the H\"{o}lder inequality, it follows that, for any $x\in\mathbb{X}$,
\begin{align*}
&\sum_{i=-\infty}^{i_0-1}\sum_{j\in\nn}\lambda_{i,j} S_L\lf(\lf[I-e^{-r^2_{B_{i,j}}L}\r]^MF_\tau\lf(\sqrt{L}\r)a_{i,j}\r)(x)\\
&\hs\leq\lf(\sum_{i=-\infty}^{i_0-1}2^{iaq'}\r)^{\frac{1}{q'}}
\lf\{\sum_{i=-\infty}^{i_0-1}2^{-iaq}\lf[\sum_{j\in\nn}\lambda_{i,j} S_L\lf(\lf[I-e^{-r^2_{B_{i,j}}L}\r]^MF_\tau\lf(\sqrt{L}\r)a_{i,j}\r)(x)\r]^q\r\}^{\frac{1}{q}}\\
&\hs=\frac{2^{i_0a}}{(2^{aq}-1)^{1/q}}
\lf\{\sum_{i=-\infty}^{i_0-1}2^{-iaq}\lf[\sum_{j\in\nn}\lambda_{i,j} S_L\lf(\lf[I-e^{-r^2_{B_{i,j}}L}\r]^MF_\tau\lf(\sqrt{L}\r)a_{i,j}\r)(x)\r]^q\r\}^{\frac{1}{q}}.
\end{align*}
By this, the assumption that $X^{\frac{1}{s_0}}(\mathbb{X})$ is a BBF space, the well-known inequality, for any $d\in(0,1]$
and $\{\theta_j\}_{j\in\nn}\subset\mathbb{C}$, $(\sum_{j\in\nn}|\theta_j|)^d\leq\sum_{j\in\nn}|\theta_j|^d$,
$q\in(1,\min\{\frac{1}{s_0},\frac{2}{q_0}\}]$, and $\lambda_{i,j}:=2^i\|\mathbf{1}_{B_{i,j}}\|_{X(\mathbb{X})}$, we know that
\begin{align}\label{e4.14}
{\rm{I_{1}}}&\lesssim\lf\|\mathbf{1}_{\{x\in\mathbb{X}: \ \frac{2^{i_0a}}{(2^{aq}-1)^{1/q}}
\{\sum_{i=-\infty}^{i_0-1}2^{-iaq}[\sum_{j\in\nn}\lambda_{i,j}S_L([I-e^{-r^2_{B_{i,j}}L}]^M
F_\tau(\sqrt{L})a_{i,j})(x)]^q\}^{\frac{1}{q}}>2^{i_0-2}\}}\r\|_{X(\mathbb{X})}\nonumber\\
&\lesssim2^{-i_0q(1-a)}\lf\|\sum_{i=-\infty}^{i_0-1}2^{-iaq}\lf[\sum_{j\in\nn}
\lambda_{i,j} S_L\lf(\lf[I-e^{-r^2_{B_{i,j}}L}\r]^MF_\tau\lf(\sqrt{L}\r)a_{i,j}\r)\r]^q\r\|_{X(\mathbb{X})}\nonumber\\
&\lesssim2^{-i_0q(1-a)}\lf\|\sum_{i=-\infty}^{i_0-1}2^{-iaqs_0}\sum_{k\in\zz_+}\lf[\sum_{j\in\nn}
\lambda_{i,j} S_L\lf(\lf[I-e^{-r^2_{B_{i,j}}L}\r]^MF_\tau\lf(\sqrt{L}\r)a_{i,j}\r)\mathbf{1}_{U_k(B^\tau_{i,j})}\r]^{qs_0}\r\|
^{\frac{1}{s_0}}_{X^{\frac{1}{s_0}}(\mathbb{X})}\nonumber\\
&\lesssim2^{-i_0q(1-a)}\lf\{\sum_{i=-\infty}^{i_0-1}\sum_{k\in\zz_+}2^{-iqs_0(a-1)}\r.\nonumber\\
&\hs\hs\hs\hs\hs\hs\lf.\times\lf\|\sum_{j\in\nn}\lf[\lf\|\mathbf{1}_{B_{i,j}}\r\|_{X(\mathbb{X})} S_L\lf(\lf[I-e^{-r^2_{B_{i,j}}L}\r]^MF_\tau\lf(\sqrt{L}\r)a_{i,j}\r)
\mathbf{1}_{U_k(B^\tau_{i,j})}\r]^{qs_0}\r\|_{X^{\frac{1}{s_0}}(\mathbb{X})}\r\}^{\frac{1}{s_0}},
\end{align}
where $B^\tau_{i,j}:=(1+|\tau|)B_{i,j}$ for any $i\in\zz$ and $j\in\nn$.

Moreover, from \eqref{e4.6}, it follows that, for any $i\in\zz$ and $j\in\nn$,
$$\lf\|\lf[\lf\|\mathbf{1}_{B_{i,j}}\r\|_{X(\mathbb{X})}S_L\lf(\lf[I-e^{-r^2_{B_{i,j}}L}\r]^MF_\tau
\lf(\sqrt{L}\r)a_{i,j}\r)\mathbf{1}_{U_k(B^\tau_{i,j})}\r]^{q}\r\|_{L^{\frac{2}{q}}
(\mathbb{X})}\ls2^{\frac{-k\beta q}{\gamma}}\lf[\mu\lf(B_{i,j}\r)\r]^{\frac{q}{2}}.$$
Applying this and Lemma \ref{le2.2} with $q:=\frac{2}{q}$ and $\theta:=2^k(1+|\tau|)$, we know that, for any $k\in\zz_+$,
\begin{align}\label{e4.15}
&\lf\|\sum_{j\in\nn}\lf[\lf\|\mathbf{1}_{B_{i,j}}\r\|_{X(\mathbb{X})}S_L\lf(\lf[I-e^{-r^2_{B_{i,j}}L}\r]^MF_\tau
\lf(\sqrt{L}\r)a_{i,j}\r)\mathbf{1}_{U_k(B^\tau_{i,j})}\r]^{qs_0}\r\|_{X^{\frac{1}{s_0}}(\mathbb{X})}\nonumber\\
&\hs\lesssim2^{k(1-\frac{s_0q}{2})n}(1+|\tau|)^{(1-\frac{s_0q}{2})n}\lf\|\sum_{j\in\nn}2^{\frac{-k\beta qs_0}{\gamma}} \mathbf{1}_{B_{i,j}}\r\|_{X^{\frac{1}{s_0}}(\mathbb{X})}\nonumber\\
&\hs=(1+|\tau|)^{(1-\frac{s_0q}{2})n}2^{-ks_0[\frac{\beta q}{\gamma}-(\frac{1}{s_0}-\frac{q}{2})n]}
\lf\|\sum_{j\in\nn}\mathbf{1}_{B_{i,j}}\r\|_{X^{\frac{1}{s_0}}(\mathbb{X})}.
\end{align}
Notice that $q>1$ and $\beta>\gamma n(\frac{1}{s_0}-\frac{1}{2})$, we conclude that $q>1>\frac{n/s_0}{(\beta/\gamma)+(n/2)}$,
which, together with \eqref{e4.14}, \eqref{e4.15}, $a\in(0,1-\frac{1}{q})$, $2^{i_0}\leq\alpha<2^{i_0+1}$,
$\lambda_{i,j}:=2^i\|\mathbf{1}_{B_{i,j}}\|_{X(\mathbb{X})}$, the assumption that $X^{\frac{1}{s_0}}(\mathbb{X})$
is a BBF space, Remark \ref{re2.1}, and \eqref{e4.3}, further implies that
\begin{align}\label{e4.16}
{\rm{I_{1}}}&\lesssim2^{-i_0q(1-a)}\lf\{\sum_{i=-\infty}^{i_0-1}\sum_{k\in\zz_+}2^{-iqs_0(a-1)}
(1+|\tau|)^{(1-\frac{s_0q}{2})n}2^{-ks_0[\frac{\beta q}{\gamma}-(\frac{1}{s_0}-\frac{q}{2})n]}
\lf\|\sum_{j\in\nn}\mathbf{1}_{B_{i,j}}\r\|_{X^{\frac{1}{s_0}}(\mathbb{X})}\r\}^{\frac{1}{s_0}}\nonumber\\
&\lesssim2^{-i_0q(1-a)}(1+|\tau|)^{(\frac{1}{s_0}-\frac{q}{2})n}\lf\{\sum_{i=-\infty}^{i_0-1}2^{-iqs_0(a-1)}
\lf\|\sum_{j\in\nn}\mathbf{1}_{B_{i,j}}\r\|_{X^{\frac{1}{s_0}}(\mathbb{X})}\r\}^{\frac{1}{s_0}}\nonumber\\
&\lesssim2^{-i_0q(1-a)}(1+|\tau|)^{(\frac{1}{s_0}-\frac{q}{2})n}
\lf\{\sum_{i=-\infty}^{i_0-1}2^{-iqs_0(a-1)}2^{-is_0}\r\}^{\frac{1}{s_0}}
\sup_{i\in\zz}2^i\lf\|\sum_{j\in\nn}\mathbf{1}_{B_{i,j}}\r\|^{\frac{1}{s_0}}_{X^{\frac{1}{s_0}}(\mathbb{X})}\nonumber\\
&\lesssim2^{-i_0}(1+|\tau|)^{(\frac{1}{s_0}-\frac{q}{2})n}\sup_{i\in\zz}2^i\lf\|\sum_{j\in\nn}\mathbf{1}_{B_{i,j}}\r\|
^{\frac{1}{s_0}}_{X^{\frac{1}{s_0}}(\mathbb{X})}\nonumber\\
&\lesssim\alpha^{-1}(1+|\tau|)^{(\frac{1}{s_0}-\frac{1}{2})n}\sup_{i\in\zz}2^i\lf\|\lf[\sum_{j\in\nn}\mathbf{1}_{B_{i,j}}\r]
^{\frac{1}{s_0}}\r\|_{X(\mathbb{X})}\nonumber\\
&\lesssim\alpha^{-1}(1+|\tau|)^{(\frac{1}{s_0}-\frac{1}{2})n}\sup_{i\in\zz}2^i\lf\|\sum_{j\in\nn}\mathbf{1}_{B_{i,j}}
\r\|_{X(\mathbb{X})}\nonumber\\
&\lesssim\alpha^{-1}(1+|\tau|)^{(\frac{1}{s_0}-\frac{1}{2})n}\sup_{i\in\zz}\lf\|\sum_{j\in\nn}\frac{\lambda_{i,j}\mathbf{1}_{B_{i,j}}}
{\|\mathbf{1}_{B_{i,j}}\|_{X(\mathbb{X})}}\r\|_{X(\mathbb{X})}\nonumber\\
&\lesssim\alpha^{-1}(1+|\tau|)^{(\frac{1}{s_0}-\frac{1}{2})n}\sup_{i\in\zz}\lf\|\lf\{\sum_{j\in\nn}\lf[\frac{\lambda_{i,j}\mathbf{1}_{B_{i,j}}}
{\|\mathbf{1}_{B_{i,j}}\|_{X(\mathbb{X})}}\r]^{s_0}\r\}^{\frac{1}{s_0}}\r\|_{X(\mathbb{X})}\nonumber\\
%&\lesssim\alpha^{-1}\|f\|_{WH^{M,\varepsilon}_{X,L,\mathrm{mol}}(\mathbb{X})}\nonumber\\
&\lesssim\alpha^{-1}(1+|\tau|)^{(\frac{1}{s_0}-\frac{1}{2})n}\|f\|_{WH_{X,L}(\mathbb{X})}.
\end{align}

Next, we deal with the term ${\rm{I_{2}}}$. Since $\beta>\gamma n(\frac{1}{s_0}-\frac{1}{2})$, it follows that there exists $r\in(0,1)$ such that $\beta>\gamma n(\frac{1}{rs_0}-\frac{1}{2})$. Then, by the assumption that $X^{\frac{1}{s_0}}(\mathbb{X})$
is a BBF space, we know that
\begin{align}\label{e4.17}
{\rm{I_{2}}}&=\lf\|\mathbf{1}_{\{x\in\mathbb{X}: \ \sum_{i=i_0}^\infty\sum_{j\in\nn}\lambda_{i,j}S_L([I-e^{-r^2_{B_{i,j}}L}]^M
F_\tau(\sqrt{L})a_{i,j})(x)>\frac{\alpha}{4}\}}\r\|_{X(\mathbb{X})}\nonumber\\
&\lesssim\alpha^{-r}\lf\|\sum_{i=i_0}^\infty\sum_{j\in\nn}\lf[2^i\lf\|\mathbf{1}_{B_{i,j}}\r\|_{X(\mathbb{X})}
S_L\lf(\lf[I-e^{-r^2_{B_{i,j}}L}\r]^MF_\tau\lf(\sqrt{L}\r)a_{i,j}\r)\r]^r\r\|_{X(\mathbb{X})}\nonumber\\
&\lesssim\alpha^{-r}\lf\|\sum_{i=i_0}^\infty\sum_{j\in\nn}\sum_{k\in\zz_+}\lf[2^i\lf\|\mathbf{1}_{B_{i,j}}\r\|_{X(\mathbb{X})}
S_L\lf(\lf[I-e^{-r^2_{B_{i,j}}L}\r]^MF_\tau\lf(\sqrt{L}\r)a_{i,j}\r)
\mathbf{1}_{U_k(B^\tau_{i,j})}\r]^{rs_0}\r\|^{\frac{1}{s_0}}_{X^{\frac{1}{s_0}}(\mathbb{X})}\nonumber\\
&\lesssim\alpha^{-r}\lf\{\sum_{i=i_0}^\infty\sum_{k\in\zz_+}2^{irs_0}\r.\nonumber\\
&\hs\hs\hs\hs\lf.\times\lf\|\sum_{j\in\nn}\lf[\lf\|\mathbf{1}_{B_{i,j}}\r\|_{X(\mathbb{X})}
S_L\lf(\lf[I-e^{-r^2_{B_{i,j}}L}\r]^MF_\tau\lf(\sqrt{L}\r)a_{i,j}\r)
\mathbf{1}_{U_k(B^\tau_{i,j})}\r]^{rs_0}\r\|_{X^{\frac{1}{s_0}}(\mathbb{X})}\r\}^{\frac{1}{s_0}}.
\end{align}
Moreover, from \eqref{e4.6}, $r\in(0,1)$, it follows that,
for any $i\in\zz$ and $j\in\nn$,
$$\lf\|\lf[\lf\|\mathbf{1}_{B_{i,j}}\r\|_{X(\mathbb{X})}
S_L\lf(\lf[I-e^{-r^2_{B_{i,j}}L}\r]^MF_\tau\lf(\sqrt{L}\r)a_{i,j}\r)
\mathbf{1}_{U_k(B^\tau_{i,j})}\r]^{r}\r\|_{L^{\frac{2}{r}}
(\mathbb{X})}\ls2^{\frac{-k\beta r}{\gamma}}\lf[\mu\lf(B_{i,j}\r)\r]^{\frac{r}{2}}.$$
By this, \eqref{e4.15} with $q$ there in replaced by $r$, $\beta>\gamma n(\frac{1}{rs_0}-\frac{1}{2})$, $2^{i_0}\leq\alpha<2^{i_0+1}$,
$\lambda_{i,j}:=2^i\|\mathbf{1}_{B_{i,j}}\|_{X(\mathbb{X})}$, the assumption that $X^{\frac{1}{s_0}}(\mathbb{X})$
is a BBF space, Remark \ref{re2.1}, \eqref{e4.3}, and similar to the proof of \eqref{e4.16}, we conclude that
\begin{align}\label{e4.18}
{\rm{I_{2}}}&\lesssim\alpha^{-r}\lf\{\sum_{i=i_0}^\infty\sum_{k\in\zz_+}2^{irs_0}
(1+|\tau|)^{(1-\frac{rs_0}{2})n}2^{-ks_0[\frac{\beta r}{\gamma}-(\frac{1}{s_0}-\frac{r}{2})n]}
\lf\|\sum_{j\in\nn}\mathbf{1}_{B_{i,j}}\r\|_{X^{\frac{1}{s_0}}(\mathbb{X})}\r\}^{\frac{1}{s_0}}\nonumber\\
&\lesssim\alpha^{-r}\lf\{\sum_{i=i_0}^\infty2^{irs_0}(1+|\tau|)^{(1-\frac{rs_0}{2})n}
\lf\|\sum_{j\in\nn}\mathbf{1}_{B_{i,j}}\r\|_{X^{\frac{1}{s_0}}(\mathbb{X})}\r\}^{\frac{1}{s_0}}\nonumber\\
&\lesssim\alpha^{-r}(1+|\tau|)^{(\frac1{s_0}-\frac{r}{2})n}\lf\{\sum_{i=i_0}^\infty2^{irs_0}2^{-is_0}\r\}^{\frac{1}{s_0}}
\sup_{i\in\zz}2^i\lf\|\sum_{j\in\nn}\mathbf{1}_{B_{i,j}}\r\|^{\frac{1}{s_0}}_{X^{\frac{1}{s_0}}(\mathbb{X})}\nonumber\\
&\lesssim\alpha^{-r}(1+|\tau|)^{(\frac1{s_0}-\frac{r}{2})n}2^{-i_0(1-r)}
\sup_{i\in\zz}2^i\lf\|\sum_{j\in\nn}\mathbf{1}_{B_{i,j}}\r\|^{\frac{1}{s_0}}_{X^{\frac{1}{s_0}}(\mathbb{X})}\nonumber\\
&\lesssim\alpha^{-1}(1+|\tau|)^{(\frac1{s_0}-\frac{r}{2})n}\|f\|_{WH_{X,L}(\mathbb{X})}.
\end{align}

Finally, we deal with the terms ${\rm{I_{3}}}$ and ${\rm{I_{4}}}$.
By \eqref{e4.7} and arguments similar to the proofs of ${\rm{I_{1}}}$ and ${\rm{I_{2}}}$,
we obtain that
\begin{align}\label{e4.19}
{\rm{I_{3}}}\lesssim\alpha^{-1}(1+|\tau|)^{(\frac1{s_0}-\frac{1}{2})n}\|f\|_{WH_{X,L}(\mathbb{X})}
\end{align}
and
\begin{align}\label{e4.20}
{\rm{I_{4}}}\lesssim\alpha^{-1}(1+|\tau|)^{(\frac1{s_0}-\frac{r}{2})n}\|f\|_{WH_{X,L}(\mathbb{X})},
\end{align}
where $r\in(0,1)$, the details being omitted here.
Combining the above estimates of \eqref{e4.20}, \eqref{e4.19}, \eqref{e4.18}, \eqref{e4.16}, and \eqref{e4.5},
we conclude that \eqref{e4.1} holds true.
This completes the proof of Theorem \ref{th1.2}.
\end{proof}

Next, we show Corollary \ref{co1.2} via using Theorem \ref{co1.1}.
\begin{proof}[Proof of Corollary \ref{co1.2}]
Let $0<\gamma\neq1$, $p:=r$, $q_0:=2$, and $s_0\in(0,\min\{r,1\})$ satisfying
$\beta\in[\gamma n(\frac{1}{s_0}-\frac{1}{2}),\infty)$.
In the case when $X(\mathbb{X}):=L^r(\mathbb{X})$, we denote $H_{X,L}(\mathbb{X})$ by $H^r_{L}(\mathbb{X})$.
From \cite[Theorem 1.2]{gly09}, we deduce that $X(\mathbb{X}):=L^r(\mathbb{X})$
satisfies Assumption \ref{as1.1} for the aforementioned $p$.
Moreover, by \cite[p.10, Theorem 2.5]{bs88} and the boundedness of $\mathcal{M}$ on $L^q(\mathbb{X})$
for any given $q\in(1,\infty)$ (see, for instance, \cite[Theorem 2.2]{h01}), we know that
$X(\mathbb{X}):=L^r(\mathbb{X})$ satisfies Assumption \ref{as1.2} for the aforementioned $s_0$ and $q_0$.
Thus, all the assumptions of Theorem \ref{co1.1} are satisfied with
$X(\mathbb{X}):=L^r(\mathbb{X})$, which consequently yields the desired conclusions of the present theorem.
This completes the proof of Corollary \ref{co1.2}.
\end{proof}

%%%%%%%%%%%%%%%%%%%%%%%%%%%%%%%%%%%%%%%%%%%%%%%%%%%%%%%%%%%%%%%%%%%%%%%

%%%%%%%%%%%%%%%%%%%%%%%settion 5 %%%%%%%%%%%%%%%%%%%%%%%%%%%%%%%%%%%%%

%%%%%%%%%%%%%%%%%%%%%%%%%%%%%%%%%%%%%%%%%%%%%%%%%%%%%%%%%%%%%%%%%%%%%%

\section{Proof of Theorem \ref{th1.3}}\label{s5}
\hskip\parindent
In this section, we give the proof of Theorem \ref{th1.3}.
To show Theorem \ref{th1.3}, we state the following lemma.

Let $\psi\in\cs(\mathbb{R})$ be such that $\supp(\psi)\subset[\frac1{2},2]$, $\int_{\mathbb{R}}\psi(t)\frac{dt}{t}\neq0$, and
$$\sum_{j\in\zz}\psi\lf(2^{-2j}\lambda\r)=1$$ for any $\lambda\in(0,\infty)$. For any $\lambda\in(0,\infty)$.
the {\it discrete square function} $S_{L,\psi}(f)$ is defined by setting, for any $x\in\mathbb{X}$,
\begin{equation*}
S_{L,\psi}(f)(x):=\lf[\sum_{j\in\zz}\lf|\psi\lf(2^{-2j}L\r)(f)(x)\r|^2\r]^{\frac1{2}}.
\end{equation*}
Moreover, recall that, for any $t,\lambda\in(0,\infty)$ and $\varphi\in\cs(\mathbb{R})$,
the {\it peetre type maximal operator} $\varphi^*_\lambda(tL)$ is defined by setting,
for any $f\in L^2(\mathbb{X})$ and $x\in\mathbb{X}$,
$$\varphi^*_\lambda(tL)(f)(x):=\sup_{x\in\mathbb{X}}\frac{|\varphi(tL)f(y)|}{[1+t^{-1/2}d(x,y)]^\lambda}.$$

\begin{lemma}\label{le5.1}
Let $L$ be a non-negative self-adjoint operator on $L^2(\mathbb{X})$
satisfying the Gaussian upper estimate \eqref{e1.7}.
Assume that $X(\mathbb{X})$ is a ${\rm{BQBF}}$ space satisfying Assumption \ref{as1.1} for some $p\in(0,\infty)$.
Then there exists a positive constant $C$ such that, for any $f\in L^2(\mathbb{X})$
with $\|S_{L,\psi}(f)\|_{WX(\mathbb{X})}<\infty$,
$$\|f\|_{WX_{X,L}(\mathbb{X})}\leq C\lf\|S_{L,\psi}(f)\r\|_{WX(\mathbb{X})}.$$
\end{lemma}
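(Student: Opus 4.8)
The plan is to establish the comparison $\|f\|_{WH_{X,L}(\mathbb{X})}\lesssim\|S_{L,\psi}(f)\|_{WX(\mathbb{X})}$ by discretizing the continuous Lusin area integral $S_L$ (or rather $S_{L,\Psi}$, permissible by Remark~\ref{re2.2}) and dominating it pointwise by a vector-valued maximal expression built from the dyadic pieces $\psi(2^{-2j}L)f$. First I would fix the even bump $\Psi$ from Remark~\ref{re2.2} and, using the resolution of identity $\sum_{j\in\zz}\psi(2^{-2j}\lambda)=1$, write for each $t\in(0,\infty)$ the operator $\Psi(t\sqrt L)=\sum_{j\in\zz}\Psi(t\sqrt L)\psi(2^{-2j}L)$. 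Because of the support conditions ($\supp\Psi\subset\{\frac14\le|\xi|\le4\}$ and $\supp\psi\subset[\frac12,2]$), for a fixed $t$ only finitely many (in fact $O(1)$) indices $j$ with $2^{2j}\sim t^{-2}$ contribute; the composition $\Psi(t\sqrt L)\psi(2^{-2j}L)$ has a kernel satisfying Gaussian/Davies--Gaffney-type off-diagonal decay at scale $\sqrt t\sim 2^{-j}$, which follows from the Gaussian upper bound \eqref{e1.7} via the standard finite-propagation-speed/functional-calculus machinery (as in \cite{hlmmy11,jy11a}).

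Next I would carry out the pointwise domination. Using the kernel estimates above together with the Ahlfors-type doubling, one obtains for a.e.\ $x$
\begin{align*}
S_{L,\Psi}(f)(x)\lesssim\lf(\sum_{j\in\zz}\lf[\psi^*_\lambda\lf(2^{-2j}L\r)(f)(x)\r]^2\r)^{1/2}
\end{align*}
for a sufficiently large Peetre exponent $\lambda$ (chosen larger than $n/\min\{2,p\}$, say), where $\psi^*_\lambda$ is the Peetre-type maximal operator recalled just before the lemma. The Peetre maximal function is in turn controlled by the Hardy--Littlewood maximal operator: there is the well-known pointwise bound $\psi^*_\lambda(2^{-2j}L)(f)(x)\lesssim\big[\mathcal M\big(|\psi(2^{-2j}L)f|^{s}\big)(x)\big]^{1/s}$ for any $s\in(0,1]$ with $s<p$ and $\lambda s>n$, again a consequence of the Gaussian bounds and the Fefferman--Stein sub-mean-value property. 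Plugging in $s<p$ and combining the last two displays gives
\begin{align*}
S_{L,\Psi}(f)(x)\lesssim\lf(\sum_{j\in\zz}\lf[\mathcal M\lf(\lf|\psi\lf(2^{-2j}L\r)f\r|^{s}\r)(x)\r]^{2/s}\r)^{1/2}.
\end{align*}

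Finally I would take $WX(\mathbb{X})$-quasinorms of both sides. Writing $u:=2/s\in(1,\infty)$ and $t:=s\in(0,p)$, the right-hand side is exactly of the form to which the weak-type Fefferman--Stein vector-valued maximal inequality, Lemma~\ref{le2.3}, applies (after passing to the $\frac1t$-convexification and using $W(X^{1/t})=(WX)^{1/t}$ from Section~\ref{s2}). This yields
\begin{align*}
\lf\|S_{L,\Psi}(f)\r\|_{WX(\mathbb{X})}=\lf\|\lf(\sum_{j}\lf[\mathcal M\lf(|\psi(2^{-2j}L)f|^{s}\r)\r]^{u}\r)^{1/u}\r\|^{1/s}_{WX^{1/s}(\mathbb{X})}^{?}\lesssim\lf\|\lf(\sum_{j}\lf|\psi\lf(2^{-2j}L\r)f\r|^{2}\r)^{1/2}\r\|_{WX(\mathbb{X})}=\lf\|S_{L,\psi}(f)\r\|_{WX(\mathbb{X})},
\end{align*}
and then Remark~\ref{re2.2} identifies the left side with $\|f\|_{WH_{X,L}(\mathbb{X})}$, completing the proof. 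The main obstacle I anticipate is not any of these steps individually but rather being careful that the implicit constants in the kernel decay and in the $j$-summation are uniform in $t$ (so that summing over $j$ costs only a harmless geometric factor via Lemma~\ref{le4.1}-type estimates), and that the choice of $s<p$ is compatible with the Peetre exponent $\lambda$ dictated by the off-diagonal decay order — i.e.\ one must verify that the Gaussian bound \eqref{e1.7} supplies enough decay to take $\lambda$ as large as needed. A secondary technical point is justifying the pointwise interchange of the sum over $j$ with the area integral for $f\in L^2(\mathbb{X})$, which is handled by dominated convergence using the $L^2$-boundedness of the relevant square functions.
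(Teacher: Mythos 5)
Your proposal is correct and uses the same overall machinery as the paper: dominate the continuous Lusin area function pointwise by a discrete Peetre-type maximal square function, control the Peetre maximal operator $\psi^*_\lambda(2^{-2j}L)$ by $[\mathcal M(|\psi(2^{-2j}L)f|^r)]^{1/r}$ for some $r<\min\{p,1\}$, and then invoke Lemma~\ref{le2.3} (the weak-type vector-valued Fefferman--Stein inequality). The one genuine difference is in the first reduction. You first pass to the compactly supported $\Psi$ via Remark~\ref{re2.2} and then discretize $S_{L,\Psi}$ by inserting the resolution of identity $\sum_j\psi(2^{-2j}\cdot)=1$, exploiting that only $O(1)$ scales overlap for each $t$. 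The paper instead works directly with $S_L$, observing that the area function is generated by $\varphi(\xi):=\xi e^{-\xi}$ (so no appeal to Remark~\ref{re2.2} is needed), bounds the inner integrand pointwise by $2^\lambda\varphi^*_\lambda(t^2L)f(x)$, and then cites the two estimates from \cite{bl22} (p.\,8 and p.\,11) — namely
$\big[\int_0^\infty|\varphi^*_\lambda(t^2L)f(x)|^2\tfrac{dt}{t}\big]^{1/2}\lesssim\big\{\sum_j[\psi^*_\lambda(2^{-2j}L)f(x)]^2\big\}^{1/2}$ and $\psi^*_\lambda(2^{-2j}L)f\lesssim[\mathcal M(|\psi(2^{-2j}L)f|^r)]^{1/r}$ — rather than rederiving the off-diagonal kernel decay from scratch. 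Your route is slightly longer but self-contained; the paper's is shorter at the cost of relying on \cite{bl22}. Both are valid, and the constants/exponent choices ($\lambda>n/p$, $r<\min\{p,1\}$) are compatible. The only blemish in your write-up is a stray malformed exponent (\texttt{\^{}\{?\}}) in the final display; the logic is fine.
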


\begin{proof}
Let $\lambda\in(\frac{n}{p},\infty)$, $r\in(0,\min\{p,1\})$, $\varphi(\xi):=\xi e^{-\xi}$ for any $\xi\in\mathbb{R}$,
and $f\in L^2(\mathbb{X})$ with $\|S_{L,\psi}(f)\|_{WX(\mathbb{X})}<\infty$. From \cite[p.8 and p.11]{bl22},
it follows that, for any $j\in\zz$ and $x\in\mathbb{X}$,
$$\psi^*_\lambda\lf(2^{-2j}L\r)f(x)\lesssim\lf[\mathcal{M}\lf(\lf|\psi\lf(2^{-2j}L\r)f\r|^r\r)(x)\r]^{\frac{1}{r}}$$
and
$$\lf[\int^\infty_0\lf|\varphi^*_\lambda\lf(t^2L\r)f(x)\r|^2\frac{dt}{t}\r]^{\frac1{2}}
\lesssim\lf\{\sum_{j\in\zz}\lf[\psi^*_\lambda\lf(2^{-2j}L\r)f(x)\r]^2\r\}^{\frac1{2}}.$$
By this, the fact that, for any $t\in(0,\infty)$ and $x,y\in\mathbb{X}$ satisfying $d(x,y)<t$,
$$\lf|\varphi\lf(t^2L\r)f(y)\r|\leq2^\lambda\varphi^*_\lambda\lf(t^2L\r)f(x),$$
and Lemma \ref{le2.3}, we obtain that
\begin{align*}
\|f\|_{WH_{X,L}(\mathbb{X})}&=\lf\|S_L(f)\r\|_{WX(\mathbb{X})}\\
&=\lf\|\lf[\int^\infty_0\int_{B(\cdot,t)}
\lf|t^2 L e^{-t^2 L}(f)(y)\r|^2\frac{d\mu(y)dt}{V(\cdot,t)t}\r]^{\frac1{2}}\r\|_{WX(\mathbb{X})}\\
&=\lf\|\lf[\int^\infty_0\int_{B(\cdot,t)}
\lf|\varphi\lf(t^2L\r)f(y)\r|^2\frac{d\mu(y)dt}{V(\cdot,t)t}\r]^{\frac1{2}}\r\|_{WX(\mathbb{X})}\\
&\lesssim\lf\|\lf[\int^\infty_0\lf|\varphi^*_\lambda\lf(t^2L\r)f\r|^2\frac{dt}{t}\r]^{\frac1{2}}\r\|_{WX(\mathbb{X})}\\
&\lesssim\lf\|\lf\{\sum_{j\in\zz}\lf[\mathcal{M}
\lf(\lf|\psi\lf(2^{-2j}L\r)f\r|^r\r)\r]^{\frac{2}{r}}\r\}^{\frac1{2}}\r\|_{WX(\mathbb{X})}\\
&\lesssim\lf\|\lf[\sum_{j\in\zz}\lf|\psi\lf(2^{-2j}L\r)f\r|^2\r]^{\frac1{2}}\r\|_{WX(\mathbb{X})}
=\lf\|S_{L,\psi}(f)\r\|_{WX(\mathbb{X})}.
\end{align*}
This completes the proof of Lemma \ref{le5.1}.
\end{proof}

Now, we show Theorem \ref{th1.3} via using Theorem \ref{th1.1} and Lemma \ref{le5.1}.
\begin{proof}[Proof of Theorem \ref{th1.3}]
Let $p\in(0,\infty)$, $s_0\in(0,\min\{p,1\}]$, $\alpha>n(\frac{1}{s_0}-\frac{1}{2})>\frac{n}{2}$, and
$M>\max\{\frac{n}{2}(\frac{1}{s_0}-\frac{1}{2}),\frac{1}{2}(\alpha-\frac{n}{2})\}$.
By Lemma \ref{le5.1}, we conclude that, for any $\tau\in \mathbb{R}$ and $f\in WH_{X,L}(\mathbb{X})\cap L^2(\mathbb{X})$,
$$\lf\|L^{i\tau}f\r\|_{WH_{X,L}(\mathbb{X})}\lesssim\lf\|S_{L,\psi}\lf(L^{i\tau}f\r)\r\|_{WX(\mathbb{X})}.$$
Thus, to show Theorem \ref{th1.3}, it suffices to show that, for any $\tau\in \mathbb{R}$ and $f\in WH_{X,L}(\mathbb{X})\cap L^2(\mathbb{X})$,
\begin{align}\label{e5.1}
\lf\|S_{L,\psi}\lf(L^{i\tau}f\r)\r\|_{WX(\mathbb{X})}\lesssim
\lf(1+|\tau|\r)^{n(\frac{1}{s_0}-\frac{r}{2})}\|f\|_{WH_{X,L}(\mathbb{X})}.
\end{align}
Let $f\in WH_{X,L}(\mathbb{X})\cap L^2(\mathbb{X})$. By Theorem \ref{th1.1}, we know that there exists a sequence
$\{a_{i,j}\}_{i\in\zz,j\in\nn}$ of $(X,M)_{L}$-atoms associated with the balls
$\{B_{i,j}\}_{i\in\zz,j\in\nn}$ such that
\begin{align}\label{e5.2}
f=\sum_{i\in\zz}\sum_{j\in\nn}\lambda_{i,j}a_{i,j} \ \ {\rm{in}} \ \ L^2(\mathbb{X}),
\end{align}
where $\lambda_{i,j}:=2^i\|\mathbf{1}_{B_{i,j}}\|_{X(\mathbb{X})}$ for each $i\in\zz$ and $j\in\nn$, and
\begin{align}\label{e5.3}
\sup_{i\in\zz}\lf\|\lf\{\sum_{j\in\nn}\lf[\frac{\lambda_{i,j}\mathbf{1}_{B_{i,j}}}
{\|\mathbf{1}_{B_{i,j}}\|_{X(\mathbb{X})}}\r]^{s_0}\r\}^{\frac1{s_0}}\r\|_{X(\mathbb{X})}
\lesssim\|f\|_{WH_{X,L}(\mathbb{X})}.
\end{align}
By \eqref{e5.2} and the fact that $S_{L,\psi}$ is bounded on $L^2(\mathbb{X})$, for almost every $x\in\mathbb{X}$,
\begin{align}\label{e5.4}
S_{L,\psi}\lf(L^{i\tau}f\r)(x)\leq\sum_{i\in\zz}\sum_{j\in\nn}\lambda_{i,j}S_{L,\psi}\lf(L^{i\tau}a_{i,j}\r)(x).
\end{align}
Moreover, by the spectral theory, we know that, for any $i\in\zz$ and $j\in\nn$,
$$I=\lf(I-e^{-r^2_{B_{i,j}}L}\r)^M+\sum^M_{k=1}(-1)^{k+1}C^k_Me^{-kr^2_{B_{i,j}}L}
=:\lf(I-e^{-r^2_{B_{i,j}}L}\r)^M+P\lf(r^2_{B_{i,j}}L\r),$$
where $I$ denotes the identity operator on $L^2(\mathbb{X})$ and, for any
$k\in\{0,1,\ldots,M\}$, $C^k_M:=\frac{M!}{k!(M-k)!}$, which implies that, for any $i\in\zz$, $j\in\nn$, and $x\in\mathbb{X}$,
\begin{align*}
S_{L,\psi}\lf(L^{i\tau}a_{i,j}\r)(x)&\leq S_{L,\psi}\lf(\lf[I-e^{-r^2_{B_{i,j}}L}\r]^ML^{i\tau}a_{i,j}\r)(x)+
S_{L,\psi}\lf(P\lf(r^2_{B_{i,j}}L\r)L^{i\tau}a_{i,j}\r)(x).
\end{align*}
By this, \eqref{e5.4}, and the definition of $WX(\mathbb{X})$, we conclude that
\begin{align}\label{e5.5}
\lf\|S_{L,\psi}\lf(L^{i\tau}f\r)\r\|_{WX(\mathbb{X})}
&=\sup_{\alpha\in(0,\infty)}\alpha\lf\|\mathbf{1}_{\{x\in\mathbb{X}: \ S_{L,\psi}(L^{i\tau}f)(x)>\alpha\}}\r\|_{X(\mathbb{X})}\nonumber\\
&\leq\sup_{\alpha\in(0,\infty)}\alpha\lf\|\mathbf{1}_{\{x\in\mathbb{X}: \ \sum_{i=-\infty}^{i_0-1}\sum_{j\in\nn}\lambda_{i,j}S_{L,\psi}([I-e^{-r^2_{B_{i,j}}L}]^M
L^{i\tau}a_{i,j})(x)>\frac{\alpha}{4}\}}\r\|_{X(\mathbb{X})}\nonumber\\
&\hs+\sup_{\alpha\in(0,\infty)}\alpha\lf\|\mathbf{1}_{\{x\in\mathbb{X}: \ \sum_{i=i_0}^\infty\sum_{j\in\nn}\lambda_{i,j}S_{L,\psi}([I-e^{-r^2_{B_{i,j}}L}]^M
L^{i\tau}a_{i,j})(x)>\frac{\alpha}{4}\}}\r\|_{X(\mathbb{X})}\nonumber\\
&\hs+\sup_{\alpha\in(0,\infty)}\alpha\lf\|\mathbf{1}_{\{x\in\mathbb{X}: \ \sum_{i=-\infty}^{i_0-1}\sum_{j\in\nn}\lambda_{i,j}S_{L,\psi}(P(r^2_{B_{i,j}}L)
L^{i\tau}a_{i,j})(x)>\frac{\alpha}{4}\}}\r\|_{X(\mathbb{X})}\nonumber\\
&\hs+\sup_{\alpha\in(0,\infty)}\alpha\lf\|\mathbf{1}_{\{x\in\mathbb{X}: \ \sum_{i=i_0}^\infty\sum_{j\in\nn}\lambda_{i,j}S_{L,\psi}(P(r^2_{B_{i,j}}L)
L^{i\tau}a_{i,j})(x)>\frac{\alpha}{4}\}}\r\|_{X(\mathbb{X})}\nonumber\\
&=:\sup_{\alpha\in(0,\infty)}\alpha\lf[{\rm{II_{1}+II_{2}+II_{3}+II_{4}}}\r].
\end{align}

Next, we prove that, for any $(X,M)_{L}$-atom $a$, associated with the ball $B:=B(x_B,r_B)$ of $\mathbb{X}$ for some $x_B\in\mathbb{X}$ and $r_B\in(0,\infty)$, and any $k\in\zz_+$ and $\alpha>\frac{n}{2}$,
\begin{align}\label{e5.6}
\lf\|S_{L,\psi}\lf(\lf[I-e^{-r^2_{B}L}\r]^ML^{i\tau}a\r)\r\|_{L^2(U_k(B^\tau))}\ls2^{-k\alpha}
\lf[\mu\lf(B\r)\r]^{\frac1{2}}\lf\|\mathbf{1}_B\r\|^{-1}_{X(\mathbb{X})}
\end{align}
and
\begin{align}\label{e5.7}
\lf\|S_{L,\psi}\lf(P\lf(r^2_{B}L\r)L^{i\tau}a\r)\r\|_{L^2(U_k(B^\tau))}\ls2^{-k\alpha}
\lf[\mu\lf(B\r)\r]^{\frac1{2}}\lf\|\mathbf{1}_B\r\|^{-1}_{X(\mathbb{X})},
\end{align}
where $B^\tau:=(1+|\tau|)B$.

We first show \eqref{e5.6}. If $k\in\{0,1\}$, by the $L^2$ boundedness of
$S_{L,\psi}$ and $L^{i\tau}$, and \eqref{e2.2}, we know that
\begin{align}\label{e5.8}
\lf\|S_{L,\psi}\lf(\lf[I-e^{-r^2_{B}L}\r]^ML^{i\tau}a\r)\r\|_{L^2(U_k(B^\tau))}
\ls\|a\|_{L^2(\mathbb{X})}
\lesssim\lf[\mu\lf(B\r)\r]^{\frac1{2}}\lf\|\mathbf{1}_B\r\|^{-1}_{X(\mathbb{X})}.
\end{align}
If $k\in[2,\infty)\cap\nn$, for any $\ell\in\zz$ and $\lambda\in(0,\infty)$, setting
$$F_{\ell,\tau,r_B}(\lambda):=\psi\lf(2^{-2\ell}\lambda\r)\lf(I-e^{-r^2_{B}\lambda}\r)^M\lambda^{i\tau}.$$
By \cite[(3.7)]{bbhh22}, we know that, for any $\ell\in\zz$, $\alpha\in2\nn$ with $\alpha>\frac{n}{2}$, and $g\in L^2(B)$,
$$\lf\|F_{\ell,\tau,r_B}(L)g\r\|_{L^2(U_{k}(B^\tau))}
\lesssim2^{-k\alpha}\lf(2^\ell r_B\r)^{-(\alpha-\frac{n}{2})}\min\lf\{1,\lf(2^\ell r_B\r)^{2M}\r\}\|g\|_{L^2(B)}.$$
Applying this, the well-known inequality, for any $d\in(0,1]$
and $\{\theta_j\}_{j\in\nn}\subset\mathbb{C}$, $(\sum_{j\in\nn}|\theta_j|)^d\leq\sum_{j\in\nn}|\theta_j|^d$, Lemma \ref{le4.1} with $M>\frac{1}{2}(\alpha-\frac{n}{2})$, \eqref{e2.2}, we conclude that, for any $\alpha\in2\nn$ with $\alpha>\frac{n}{2}$,
and any $k\in[2,\infty)\cap\nn$,
\begin{align*}
&\lf\|S_{L,\psi}\lf(\lf[I-e^{-r^2_{B}L}\r]^ML^{i\tau}a\r)\r\|_{L^2(U_k(B^\tau))}\\
&\hs=\lf\|\lf[\sum_{\ell\in\zz}\lf|F_{\ell,\tau,r_B}(L)a\r|^2\r]^{\frac{1}{2}}\r\|_{L^2(U_k(B^\tau))}
\leq\sum_{\ell\in\zz}\lf\|F_{\ell,\tau,r_B}(L)a\r\|_{L^2(U_k(B^\tau))}\\
&\hs\lesssim\sum_{\ell\in\zz}2^{-k\alpha}\lf(2^\ell r_B\r)^{-(\alpha-\frac{n}{2})}\min\lf\{1,\lf(2^\ell r_B\r)^{2M}\r\}\|a\|_{L^2(B)}\\
&\hs\lesssim2^{-k\alpha}\|a\|_{L^2(B)}
\ls2^{-k\alpha}\lf[\mu\lf(B\r)\r]^{\frac1{2}}\lf\|\mathbf{1}_B\r\|^{-1}_{X(\mathbb{X})},
\end{align*}
which combined with \eqref{e5.8}, further implies that \eqref{e5.6} holds true.

Next, we show \eqref{e5.7}.
For any $\ell\in\zz$ and $\lambda\in(0,\infty)$, let
$$G_{\ell,\tau,r_B}(\lambda):=\psi\lf(2^{-2\ell}\lambda\r)\lf(r^2_{B}\lambda\r)^MP\lf(r^2_{B}\lambda\r)\lambda^{i\tau}.$$
By \cite[pp.6-7]{bbhh22}, we know that, for any $\ell\in\zz$, $\alpha\in2\nn$ with $\alpha>\frac{n}{2}$, and $g\in L^2(B)$,
\begin{align}\label{e5.9}
\lf\|G_{\ell,\tau,r_B}(L)g\r\|_{L^2(U_{k}(B^\tau))}
\lesssim2^{-k\alpha}\lf(2^\ell r_B\r)^{-(\alpha-\frac{n}{2})}
\min\lf\{\lf(2^\ell r_B\r)^{-2M},\lf(2^\ell r_B\r)^{2M}\r\}\|g\|_{L^2(B)}.
\end{align}
Moreover, notice that $a=L^Mb$, and the function $r_B^{-2M}b$ has similar properties as the atom $a$, that is,
$$\supp\lf(r_B^{-2M}b\r)\subset B\quad {\rm{and}} \quad
\lf\|r_B^{-2M}b\r\|_{L^2(\mathbb{X})}\leq\lf[\mu\lf(B\r)\r]^{\frac1{2}}\lf\|\mathbf{1}_B\r\|^{-1}_{X(\mathbb{X})}.$$
Using these facts, the key estimate \eqref{e5.9}, and similar to the proof of \eqref{e5.6}, we obtain that
\eqref{e5.7} holds true, the details being omitted here.

Now, we deal with the term ${\rm{II_{1}}}$. Let $s_0\in(0,1]$, $q\in(1,\min\{\frac{1}{s_0},\frac{2}{q_0}\}]$,
and $a\in(0,1-\frac{1}{q})$. Then, by the H\"{o}lder inequality, we find that, for any $x\in\mathbb{X}$,
\begin{align*}
&\sum_{i=-\infty}^{i_0-1}\sum_{j\in\nn}\lambda_{i,j}S_{L,\psi}\lf(\lf[I-e^{-r^2_{B_{i,j}}L}\r]^ML^{i\tau}a_{i,j}\r)(x)\\
&\hs\leq\lf(\sum_{i=-\infty}^{i_0-1}2^{iaq'}\r)^{\frac{1}{q'}}
\lf\{\sum_{i=-\infty}^{i_0-1}2^{-iaq}\lf[\sum_{j\in\nn}\lambda_{i,j} S_{L,\psi}\lf(\lf[I-e^{-r^2_{B_{i,j}}L}\r]^ML^{i\tau}a_{i,j}\r)(x)\r]^q\r\}^{\frac{1}{q}}\\
&\hs=\frac{2^{i_0a}}{(2^{aq}-1)^{1/q}}
\lf\{\sum_{i=-\infty}^{i_0-1}2^{-iaq}\lf[\sum_{j\in\nn}\lambda_{i,j} S_{L,\psi}\lf(\lf[I-e^{-r^2_{B_{i,j}}L}\r]^ML^{i\tau}a_{i,j}\r)(x)\r]^q\r\}^{\frac{1}{q}}.
\end{align*}
By this, the assumption that $X^{\frac{1}{s_0}}(\mathbb{X})$ is a BBF space, the well-known inequality, for any $d\in(0,1]$
and $\{\theta_j\}_{j\in\nn}\subset\mathbb{C}$, $(\sum_{j\in\nn}|\theta_j|)^d\leq\sum_{j\in\nn}|\theta_j|^d$,
$q\in(1,\min\{\frac{1}{s_0},\frac{2}{q_0}\}]$, and $\lambda_{i,j}:=2^i\|\mathbf{1}_{B_{i,j}}\|_{X(\mathbb{X})}$, we conclude that
\begin{align}\label{e5.10}
{\rm{II_{1}}}&\lesssim\lf\|\mathbf{1}_{\{x\in\mathbb{X}: \ \frac{2^{i_0a}}{(2^{aq}-1)^{1/q}}
\{\sum_{i=-\infty}^{i_0-1}2^{-iaq}[\sum_{j\in\nn}\lambda_{i,j}S_{L,\psi}([I-e^{-r^2_{B_{i,j}}L}]^M
L^{i\tau}a_{i,j})(x)]^q\}^{\frac{1}{q}}>2^{i_0-2}\}}\r\|_{X(\mathbb{X})}\nonumber\\
&\lesssim2^{-i_0q(1-a)}\lf\|\sum_{i=-\infty}^{i_0-1}2^{-iaq}\lf[\sum_{j\in\nn}
\lambda_{i,j} S_{L,\psi}\lf(\lf[I-e^{-r^2_{B_{i,j}}L}\r]^ML^{i\tau}a_{i,j}\r)\r]^q\r\|_{X(\mathbb{X})}\nonumber\\
&\lesssim2^{-i_0q(1-a)}\lf\|\sum_{i=-\infty}^{i_0-1}2^{-iaqs_0}\sum_{k\in\zz_+}\lf[\sum_{j\in\nn}
\lambda_{i,j} S_{L,\psi}\lf(\lf[I-e^{-r^2_{B_{i,j}}L}\r]^ML^{i\tau}a_{i,j}\r)\mathbf{1}_{U_k(B^\tau_{i,j})}\r]^{qs_0}\r\|
^{\frac{1}{s_0}}_{X^{\frac{1}{s_0}}(\mathbb{X})}\nonumber\\
&\lesssim2^{-i_0q(1-a)}\lf\{\sum_{i=-\infty}^{i_0-1}\sum_{k\in\zz_+}2^{-iqs_0(a-1)}\r.\nonumber\\
&\hs\hs\hs\hs\hs\hs\lf.\times\lf\|\sum_{j\in\nn}\lf[\lf\|\mathbf{1}_{B_{i,j}}\r\|_{X(\mathbb{X})} S_{L,\psi}\lf(\lf[I-e^{-r^2_{B_{i,j}}L}\r]^ML^{i\tau}a_{i,j}\r)
\mathbf{1}_{U_k(B^\tau_{i,j})}\r]^{qs_0}\r\|_{X^{\frac{1}{s_0}}(\mathbb{X})}\r\}^{\frac{1}{s_0}},
\end{align}
where $B^\tau_{i,j}:=(1+|\tau|)B_{i,j}$ for any $i\in\zz$ and $j\in\nn$.

Moreover, by \eqref{e5.6}, we find that, for any $i\in\zz$ and $j\in\nn$,
$$\lf\|\lf[\lf\|\mathbf{1}_{B_{i,j}}\r\|_{X(\mathbb{X})}S_{L,\psi}\lf(\lf[I-e^{-r^2_{B_{i,j}}L}\r]^ML^{i\tau}
a_{i,j}\r)\mathbf{1}_{U_k(B^\tau_{i,j})}\r]^{q}\r\|_{L^{\frac{2}{q}}
(\mathbb{X})}\ls2^{-kq\alpha}\lf[\mu\lf(B_{i,j}\r)\r]^{\frac{q}{2}}.$$
Applying this and Lemma \ref{le2.2} with $q:=\frac{2}{q}$ and $\theta:=2^k(1+|\tau|)$, we conclude that, for any $k\in\zz_+$,
\begin{align}\label{e5.11}
&\lf\|\sum_{j\in\nn}\lf[\lf\|\mathbf{1}_{B_{i,j}}\r\|_{X(\mathbb{X})}S_{L,\psi}\lf(\lf[I-e^{-r^2_{B_{i,j}}L}\r]^ML^{i\tau}
a_{i,j}\r)\mathbf{1}_{U_k(B^\tau_{i,j})}\r]^{qs_0}\r\|_{X^{\frac{1}{s_0}}(\mathbb{X})}\nonumber\\
&\hs\lesssim2^{k(1-\frac{s_0q}{2})n}(1+|\tau|)^{(1-\frac{s_0q}{2})n}\lf\|\sum_{j\in\nn}2^{-kq\alpha s_0} \mathbf{1}_{B_{i,j}}\r\|_{X^{\frac{1}{s_0}}(\mathbb{X})}\nonumber\\
&\hs=(1+|\tau|)^{(1-\frac{s_0q}{2})n}2^{-ks_0[q\alpha-(\frac{1}{s_0}-\frac{q}{2})n]}
\lf\|\sum_{j\in\nn}\mathbf{1}_{B_{i,j}}\r\|_{X^{\frac{1}{s_0}}(\mathbb{X})}.
\end{align}
Since $q>1$ and $\alpha>n(\frac{1}{s_0}-\frac{1}{2})$, it follows that $q>1>\frac{n/s_0}{\alpha+(n/2)}$,
which together with \eqref{e5.10}, \eqref{e5.11}, $a\in(0,1-\frac{1}{q})$, $2^{i_0}\leq\alpha<2^{i_0+1}$,
the assumption that $X^{\frac{1}{s_0}}(\mathbb{X})$
is a BBF space, Remark \ref{re2.1}, \eqref{e5.3}, and similar to the proof of \eqref{e4.16}, further implies that
\begin{align}\label{e5.12}
{\rm{II_{1}}}&\lesssim2^{-i_0q(1-a)}\lf\{\sum_{i=-\infty}^{i_0-1}\sum_{k\in\zz_+}2^{-iqs_0(a-1)}
(1+|\tau|)^{(1-\frac{s_0q}{2})n}2^{-ks_0[q\alpha-(\frac{1}{s_0}-\frac{q}{2})n]}
\lf\|\sum_{j\in\nn}\mathbf{1}_{B_{i,j}}\r\|_{X^{\frac{1}{s_0}}(\mathbb{X})}\r\}^{\frac{1}{s_0}}\nonumber\\
&\lesssim2^{-i_0q(1-a)}(1+|\tau|)^{(\frac{1}{s_0}-\frac{q}{2})n}\lf\{\sum_{i=-\infty}^{i_0-1}2^{-iqs_0(a-1)}
\lf\|\sum_{j\in\nn}\mathbf{1}_{B_{i,j}}\r\|_{X^{\frac{1}{s_0}}(\mathbb{X})}\r\}^{\frac{1}{s_0}}\nonumber\\
&\lesssim2^{-i_0q(1-a)}(1+|\tau|)^{(\frac{1}{s_0}-\frac{q}{2})n}
\lf\{\sum_{i=-\infty}^{i_0-1}2^{-iqs_0(a-1)}2^{-is_0}\r\}^{\frac{1}{s_0}}
\sup_{i\in\zz}2^i\lf\|\sum_{j\in\nn}\mathbf{1}_{B_{i,j}}\r\|^{\frac{1}{s_0}}_{X^{\frac{1}{s_0}}(\mathbb{X})}\nonumber\\
&\lesssim2^{-i_0}(1+|\tau|)^{(\frac{1}{s_0}-\frac{q}{2})n}\sup_{i\in\zz}2^i\lf\|\sum_{j\in\nn}\mathbf{1}_{B_{i,j}}\r\|
^{\frac{1}{s_0}}_{X^{\frac{1}{s_0}}(\mathbb{X})}\nonumber\\
&\lesssim\alpha^{-1}(1+|\tau|)^{(\frac{1}{s_0}-\frac{1}{2})n}\|f\|_{WH_{X,L}(\mathbb{X})}.
\end{align}

Next, we deal with the term ${\rm{II_{2}}}$. Since $\alpha>n(\frac{1}{s_0}-\frac{1}{2})$, it follows that $1>\frac{n/s_0}{\alpha+(n/2)}$. Thus, by choosing $r\in(\frac{n/s_0}{\alpha+(n/2)},1)$,
$\lambda_{i,j}:=2^i\|\mathbf{1}_{B_{i,j}}\|_{X(\mathbb{X})}$, and the assumption that $X^{\frac{1}{s_0}}(\mathbb{X})$
is a BBF space, we know that
\begin{align}\label{e5.13}
{\rm{II_{2}}}&=\lf\|\mathbf{1}_{\{x\in\mathbb{X}: \ \sum_{i=i_0}^\infty\sum_{j\in\nn}\lambda_{i,j}S_{L,\psi}([I-e^{-r^2_{B_{i,j}}L}]^M
L^{i\tau}a_{i,j})(x)>\frac{\alpha}{4}\}}\r\|_{X(\mathbb{X})}\nonumber\\
&\lesssim\alpha^{-r}\lf\|\sum_{i=i_0}^\infty\sum_{j\in\nn}\lf[2^i\lf\|\mathbf{1}_{B_{i,j}}\r\|_{X(\mathbb{X})}
S_{L,\psi}\lf(\lf[I-e^{-r^2_{B_{i,j}}L}\r]^ML^{i\tau}a_{i,j}\r)\r]^r\r\|_{X(\mathbb{X})}\nonumber\\
&\lesssim\alpha^{-r}\lf\|\sum_{i=i_0}^\infty\sum_{j\in\nn}\sum_{k\in\zz_+}\lf[2^i\lf\|\mathbf{1}_{B_{i,j}}\r\|_{X(\mathbb{X})}
S_{L,\psi}\lf(\lf[I-e^{-r^2_{B_{i,j}}L}\r]^ML^{i\tau}a_{i,j}\r)
\mathbf{1}_{U_k(B^\tau_{i,j})}\r]^{rs_0}\r\|^{\frac{1}{s_0}}_{X^{\frac{1}{s_0}}(\mathbb{X})}\nonumber\\
&\lesssim\alpha^{-r}\lf\{\sum_{i=i_0}^\infty\sum_{k\in\zz_+}2^{irs_0}\r.\nonumber\\
&\hs\hs\hs\hs\lf.\times\lf\|\sum_{j\in\nn}\lf[\lf\|\mathbf{1}_{B_{i,j}}\r\|_{X(\mathbb{X})}
S_{L,\psi}\lf(\lf[I-e^{-r^2_{B_{i,j}}L}\r]^ML^{i\tau}a_{i,j}\r)
\mathbf{1}_{U_k(B^\tau_{i,j})}\r]^{rs_0}\r\|_{X^{\frac{1}{s_0}}(\mathbb{X})}\r\}^{\frac{1}{s_0}}.
\end{align}
Moreover, by \eqref{e5.6}, $r\in(0,1)$, we find that, for any $i\in\zz$ and $j\in\nn$,
$$\lf\|\lf[\lf\|\mathbf{1}_{B_{i,j}}\r\|_{X(\mathbb{X})}S_{L,\psi}\lf(\lf[I-e^{-r^2_{B_{i,j}}L}\r]^ML^{i\tau}
a_{i,j}\r)\mathbf{1}_{U_k(B^\tau_{i,j})}\r]^{r}\r\|_{L^{\frac{2}{r}}
(\mathbb{X})}\ls2^{-kr\alpha}\lf[\mu\lf(B_{i,j}\r)\r]^{\frac{r}{2}}.$$
By this, \eqref{e5.13}, \eqref{e5.11} with $q$ there in replaced by $r$,
$r\in(\frac{n/s_0}{\alpha+(n/2)},1)$, the assumption that $X^{\frac{1}{s_0}}(\mathbb{X})$
is a BBF space, Remark \ref{re2.1}, \eqref{e5.3}, and similar to the proof of \eqref{e4.16}, we conclude that
\begin{align}\label{e5.14}
{\rm{II_{2}}}&\lesssim\alpha^{-r}\lf\{\sum_{i=i_0}^\infty\sum_{k\in\zz_+}2^{irs_0}
(1+|\tau|)^{(1-\frac{rs_0}{2})n}2^{-ks_0[r\alpha-(\frac{1}{s_0}-\frac{r}{2})n]}
\lf\|\sum_{j\in\nn}\mathbf{1}_{B_{i,j}}\r\|_{X^{\frac{1}{s_0}}(\mathbb{X})}\r\}^{\frac{1}{s_0}}\nonumber\\
&\lesssim\alpha^{-r}\lf\{\sum_{i=i_0}^\infty2^{irs_0}(1+|\tau|)^{(1-\frac{rs_0}{2})n}
\lf\|\sum_{j\in\nn}\mathbf{1}_{B_{i,j}}\r\|_{X^{\frac{1}{s_0}}(\mathbb{X})}\r\}^{\frac{1}{s_0}}\nonumber\\
&\lesssim\alpha^{-r}(1+|\tau|)^{(\frac1{s_0}-\frac{r}{2})n}\lf\{\sum_{i=i_0}^\infty2^{irs_0}2^{-is_0}\r\}^{\frac{1}{s_0}}
\sup_{i\in\zz}2^i\lf\|\sum_{j\in\nn}\mathbf{1}_{B_{i,j}}\r\|^{\frac{1}{s_0}}_{X^{\frac{1}{s_0}}(\mathbb{X})}\nonumber\\
&\lesssim\alpha^{-r}(1+|\tau|)^{(\frac1{s_0}-\frac{r}{2})n}2^{-i_0(1-r)}
\sup_{i\in\zz}2^i\lf\|\sum_{j\in\nn}\mathbf{1}_{B_{i,j}}\r\|^{\frac{1}{s_0}}_{X^{\frac{1}{s_0}}(\mathbb{X})}\nonumber\\
&\lesssim\alpha^{-1}(1+|\tau|)^{(\frac1{s_0}-\frac{r}{2})n}\|f\|_{WH_{X,L}(\mathbb{X})}.
\end{align}

Finally, we deal with the terms ${\rm{II_{3}}}$ and ${\rm{II_{4}}}$.
Using the key estimate \eqref{e5.7} and arguments similar to the estimates of ${\rm{II_{1}}}$ and ${\rm{II_{2}}}$,
we conclude that
\begin{align}\label{e5.15}
{\rm{II_{3}}}\lesssim\alpha^{-1}(1+|\tau|)^{(\frac1{s_0}-\frac{1}{2})n}\|f\|_{WH_{X,L}(\mathbb{X})}
\end{align}
and
\begin{align}\label{e5.16}
{\rm{II_{4}}}\lesssim\alpha^{-1}(1+|\tau|)^{(\frac1{s_0}-\frac{r}{2})n}\|f\|_{WH_{X,L}(\mathbb{X})},
\end{align}
where $r\in(\frac{n/s_0}{\alpha+(n/2)},1)$, the details being omitted here.
Combining the above estimates of \eqref{e5.16}, \eqref{e5.15}, \eqref{e5.14}, \eqref{e5.12}, and \eqref{e5.5},
we conclude that the claim \eqref{e5.1} holds true.
This completes the proof of Theorem \ref{th1.3}.
\end{proof}

%%%%%%%%%%%%%%%%%%%%%%%%%%%%%%%%%%%%%%%%%%%%%%%%%%%%%%%%%%%%%%%%%%%%%%%

%%%%%%%%%%%%%%%%%%%%%%%settion 6 %%%%%%%%%%%%%%%%%%%%%%%%%%%%%%%%%%%%%

%%%%%%%%%%%%%%%%%%%%%%%%%%%%%%%%%%%%%%%%%%%%%%%%%%%%%%%%%%%%%%%%%%%%%%

\section{Applications to Specific Function Spaces}\label{s6}
\hskip\parindent
In this section, we apply the results obtained in Sections \ref{s4} and \ref{s5} to some specific spaces including Orlicz spaces, variable Lebesgue spaces, weighted Lebesgue spaces,
and mixed-norm Lebesgue spaces $(\mathbb{X}:=\rn)$.

\subsection{Orlicz Spaces}
\hskip\parindent
Let us recall the definitions of both Orlicz functions and Orlicz spaces (see e.g. \cite{rr91}). A function $\Phi:[0,\fz)\rightarrow[0,\fz)$ is called an Orlicz function if $\Phi$ is non-decreasing, $\Phi(0)=0$, $\Phi(t)>0$ with $t\in(0,\fz)$, and
 $\lim_{t\rightarrow \fz}\Phi(t)=\fz$. For any
$p\in(-\fz,\fz)$, an Orlicz function $\Phi$ is said to be of lower (resp., upper) type $p$ if
there exists a positive constant $C_{(p)}$, depending on $p$, such that, for any $t\in(0,\fz)$
and $s\in(0,1)$ (resp., $s\in[1,\fz)$),
$$\Phi(st)\leq C_{(p)}s^p\Phi(t).$$
In what follows, for any given $s\in(0,\fz)$, let $\Phi_s(t):=\Phi(t^s)$ for any $t\in(0,\fz)$.
Now, we introduce the concept of the Orlicz spaces on $\mathbb{X}$.

\begin{definition}\label{thos}
Let $\Phi$ be an Orlicz function of both lower type $p_{\Phi}^-\in(0,\fz)$ and upper
type $p_{\Phi}^+\in[p_{\Phi}^-,\fz)$. The Orlicz space $L^{\Phi}(X)$ is defined to be the set of all the functions
$f\in\mathfrak{U}(\mathbb{X})$ such that
$$\|f\|_{L^{\Phi}(\mathbb{X})}:=\inf\lf\{\lambda\in(0,\fz):
\int_{\mathbb{X}}\Phi\lf(\frac{|f(x)|}{\lambda}\r)\,d\mu(x)\leq1\r\}<\fz.$$
\end{definition}
\begin{remark}
Notice that $L^{\Phi}(X)$ is a quasi-Banach space, and a BQBF space (see e.g. \cite[Section 8.3]{syy22b}).

\end{remark}

In what follows, we always assume that the Orlicz function $\Phi$ satisfies lower type $p_{\Phi}^-\in(0,\fz)$ and upper
type $p_{\Phi}^+\in[p_{\Phi}^-,\fz)$. When $X(\mathbb{X}):=L^{\Phi}(X)$, then
$WX(\mathbb{X})$ as in Definition \ref{de2.1} is reduced to the weak Orlicz space $WL^{\Phi}(\mathbb{X})$, we denote
$WH_{L,X}(\mathbb{X})$, $WH^M_{X,L,\mathrm{atom}}(\mathbb{X})$, and
$WH^{M,\varepsilon}_{X,L,\mathrm{mol}}(\mathbb{X})$ by $WH^{\Phi}_L(\mathbb{X})$, $WH^{\Phi,M}_{L,\mathrm{atom}}(\mathbb{X})$, and
$WH^{\Phi,M,\varepsilon}_{L,\mathrm{mol}}(\mathbb{X})$, respectively.

\begin{theorem}\label{thos1}
Let the operator $L$ be the same as in Theorem \ref{th1.1} and $\Phi$ be an Orlicz function
with positive lower type $p^-_{\Phi}$ and positive upper type
$p^+_{\Phi}$. Assume that $0<p^-_{\Phi}\leq p^+_{\Phi}<2$,
$M\in(\frac{n}{2}[\frac{1}{\min\{1,r^-_{\Phi}\}}-\frac{1}{2}],\fz)\cap\mathbb{N}$, and
$\varepsilon\in(\frac{n}{\min\{1,r^-_{\Phi}\}},\fz)$.
Then the spaces $WH^{\Phi}_L(\mathbb{X})$, $WH^{\Phi,M}_{L,\mathrm{atom}}(\mathbb{X})$, and
$WH^{\Phi,M,\varepsilon}_{L,\mathrm{mol}}(\mathbb{X})$
coincide with equivalent quasi-norms.
\end{theorem}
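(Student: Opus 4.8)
The plan is to reduce Theorem \ref{thos1} to Theorem \ref{th1.1} by verifying, for $X(\mathbb{X}):=L^{\Phi}(\mathbb{X})$, all the hypotheses of that theorem with a suitable choice of the parameters $p$, $s_0$ and $q_0$. Concretely, I would take $p:=p_\Phi^-$, $s_0:=\min\{1,p_\Phi^-\}$ (this is the quantity $\min\{1,r_\Phi^-\}$ occurring in the statement, with $r_\Phi^-=p_\Phi^-$ the lower type), and $q_0\in(s_0,2]$ chosen as explained below using $p_\Phi^+<2$. That $L^{\Phi}(\mathbb{X})$ is a BQBF space is recorded in the remark after Definition \ref{thos}. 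Once Assumptions \ref{as1.1} and \ref{as1.2} are checked for these parameters, Theorem \ref{th1.1} applies directly: since $M\in(\frac n2[\frac1{s_0}-\frac12],\infty)\cap\nn$ and $\varepsilon\in(\frac n{s_0},\infty)$, it yields $WH_{X,L}(\mathbb{X})=WH^M_{X,L,\mathrm{atom}}(\mathbb{X})=WH^{M,\varepsilon}_{X,L,\mathrm{mol}}(\mathbb{X})$ with equivalent quasi-norms, which, by the notational reductions recorded just before Theorem \ref{thos1}, is exactly the asserted coincidence of $WH^{\Phi}_L(\mathbb{X})$, $WH^{\Phi,M}_{L,\mathrm{atom}}(\mathbb{X})$ and $WH^{\Phi,M,\varepsilon}_{L,\mathrm{mol}}(\mathbb{X})$. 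Thus the whole argument mirrors the template used for $L^r(\mathbb{X})$ in the proof of Corollary \ref{co1.2}.

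For Assumption \ref{as1.1}, the key observation is that for any $t\in(0,\infty)$ the $\frac1t$-convexification of $L^{\Phi}(\mathbb{X})$ equals $L^{\Phi_{1/t}}(\mathbb{X})$, where $\Phi_{1/t}(s)=\Phi(s^{1/t})$ has lower type $p_\Phi^-/t$ and upper type $p_\Phi^+/t$. Hence for every $t\in(0,p_\Phi^-)$ the Orlicz function $\Phi_{1/t}$ has lower type strictly larger than $1$, and then the Fefferman--Stein vector-valued maximal inequality on $L^{\Phi_{1/t}}(\mathbb{X})$, valid for all $u\in(1,\infty)$, follows from the known vector-valued maximal estimates on Orlicz spaces whose lower type exceeds $1$ (as in \cite{syy22b}). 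This is precisely Assumption \ref{as1.1} with $p=p_\Phi^-$.

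For Assumption \ref{as1.2}, with $s_0=\min\{1,p_\Phi^-\}$ the space $X^{1/s_0}(\mathbb{X})=L^{\Phi_{1/s_0}}(\mathbb{X})$ has lower type $p_\Phi^-/s_0\ge1$ and finite upper type $p_\Phi^+/s_0$, so it is a BBF space. Its associate space $(X^{1/s_0})'(\mathbb{X})$ is, up to equivalent norms, the Orlicz space built from the complementary function of $\Phi_{1/s_0}$, whose lower type is $(p_\Phi^+/s_0)'$ (interpreted as $\infty$ in the degenerate case $p_\Phi^+=s_0$, where this associate space is just $L^\infty(\mathbb{X})$). Since $p_\Phi^+<2$, we may then fix $q_0\in(\max\{s_0,p_\Phi^+\},2]$; passing to the $\frac1{(q_0/s_0)'}$-convexification multiplies the lower type by $\frac1{(q_0/s_0)'}$, and a short computation shows the resulting Orlicz space has lower type strictly larger than $1$ exactly when $q_0>p_\Phi^+$. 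The Hardy--Littlewood maximal operator $\mathcal M$ is therefore bounded on it, by the standard boundedness criterion for $\mathcal M$ on Orlicz spaces of lower type above $1$ and finite upper type. This verifies Assumption \ref{as1.2} for these $s_0$ and $q_0$, and with $q_0\le2$ it is compatible with the range required in Theorem \ref{th1.1}.

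The only genuinely delicate point is the index bookkeeping in the previous paragraph: one must track the lower and upper types through the $\frac1{s_0}$-convexification, through passage to the associate space, and through the further $\frac1{(q_0/s_0)'}$-convexification, and confirm that the final Orlicz function still has lower type above $1$. It is here that the hypothesis $p_\Phi^+<2$ enters essentially, since it is precisely what guarantees the existence of an admissible exponent $q_0\le2$; everything else is a routine invocation of Theorem \ref{th1.1}.
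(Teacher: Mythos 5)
Your proposal follows essentially the same route the paper takes: reduce to Theorem~\ref{th1.1} by checking Assumptions~\ref{as1.1} and~\ref{as1.2} for $X(\mathbb{X})=L^{\Phi}(\mathbb{X})$, using the identity $(L^{\Phi})^{1/t}=L^{\Phi_{1/t}}$, the duality of Orlicz spaces to compute $(X^{1/s_0})'$, and type bookkeeping through the two convexifications to show the Hardy--Littlewood maximal operator is bounded there (with $p_{\Phi}^{+}<2$ entering exactly where you say it does). The paper does this with $p:=p_\Phi^-$, $q_0:=2$, and cites \cite[Theorem 6.6]{fmy20} for Assumption~\ref{as1.1} and the maximal boundedness in Assumption~\ref{as1.2}, and \cite[p.~61, Prop.~4; p.~100, Prop.~1]{rr91} plus \cite{shyy17} for the associate-space computation and its lower type $\frac{(p_\Phi^+/s_0)'}{(2/s_0)'}\in(1,\infty)$; your argument reproduces these steps in a slightly more generic form.

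The one place where you should be more careful is the choice $s_0:=\min\{1,p_\Phi^-\}$. The paper instead fixes $s_0\in(0,\min\{1,p_\Phi^-\})$ strictly (and close enough to the endpoint that $M>\tfrac n2(\tfrac1{s_0}-\tfrac12)$ and $\varepsilon>\tfrac n{s_0}$ still hold, which is possible precisely because the hypotheses on $M$ and $\varepsilon$ are open conditions). With your endpoint choice, if $p_\Phi^-<1$ then $\Phi_{1/s_0}$ has lower type exactly $1$, and your claim that $X^{1/s_0}$ ``is a BBF space'' then rests on the delicate assertion that an Orlicz function of lower type exactly $1$ is equivalent to a convex Young function, which in general is not guaranteed; lower type strictly greater than $1$ is what the standard equivalence result requires. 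Similarly, in the degenerate case $p_\Phi^+=s_0$ you have to treat $(X^{1/s_0})'=L^\infty$ separately, which you note but which the paper avoids entirely by the strict choice of $s_0$. Moving $s_0$ slightly below $\min\{1,p_\Phi^-\}$, as the paper does, removes both edge cases at no cost, and you should make that change; aside from that, your argument and the paper's are the same.
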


\begin{proof}

To prove this theorem, it suffices to show that the Orlicz space $L^{\Phi}$ satisfies Assumption \ref{as1.1} and Assumption \ref{as1.2}.
Choose $p:=p_{\Phi}^-$, $q_0:=2$, and $s_0\in(0,\min\{1,r^-_{\Phi}\})$ such that $M>\frac{n}{2}(\frac{1}{s_0}-\frac{1}{2})$ and $\varepsilon>\frac{n}{s_0}$.
From \cite[Theorem 6.6]{fmy20}, for the above $p$, we know that the BQBF space $X(\mathbb{X})=L^{\Phi}(\mathbb{X})$ satisfies Assumption \ref{as1.1}.

By \cite[p. 61, Proposition 4 and p. 100, Proposition 1]{rr91}, we conclude that,
$$\lf[\lf(\lf[L^{\Phi}(\mathbb{X})\r]^{\frac{1}{s_0}}\r)^{'}\r]^{\frac{1}{(2/s_0)^{'}}}=
L^{\Psi}(\mathbb{X}),$$
where, for any $t\in(0,\fz)$,
$$\Phi(t):=\sup_{u\in(0,\fz)}\lf(t^{\frac{1}{(2/s_0)^{'}}}u-\Phi(u^{\frac{1}{s_0}})\r).$$
Furthermore, from \cite{shyy17}, we know that $\Psi$ is an Orlicz function with positive lower type $p^-_{\Phi}:=\frac{(p_{\Phi}^+/s_0)^{'}}{(2/s_0)^{'}}\in(1,\fz)$.
By the above facts and \cite[Theorem 6.6]{fmy20}, we obtain that the BQBF space $X(\mathbb{X})=L^{\Phi}(\mathbb{X})$ satisfies Assumption \ref{as1.2}.
This completes the proof Theorem \ref{thos1}.
\end{proof}

\begin{theorem}\label{thos2}
Let $\Phi$ be an Orlicz function
with positive lower type $p^-_{\Phi}$ and positive upper type
$p^+_{\Phi}$. Assume that $0<p^-_{\Phi}\leq p^+_{\Phi}<2$.
Assume that $L$ is a non-negative self-adjoint operator on $L^2(\mathbb{X})$
satisfying the Davies-Gaffney estimate \eqref{e1.4}. Let $0<\gamma\neq1$,
$\beta\in[\gamma n(\frac{1}{\min\{1,r^-_{\Phi}\}}-\frac{1}{2}),\infty)$, and $r\in(0,1]$. Then there exists a positive
constant $C$ such that, for any $\tau\in \mathbb{R}$ and $f\in WH^{\Phi}_{L}(\mathbb{X})$,
\begin{align*}
\lf\|(I+L)^{-\beta/2}e^{i\tau L^{\gamma/2}}f\r\|_{WH^{\Phi}_{L}(\mathbb{X})}\leq C\lf(1+|\tau|\r)^{n(\frac{1}{s_0}-\frac{r}{2})}\|f\|_{WH^{\Phi}_{L}(\mathbb{X})}.
\end{align*}
\end{theorem}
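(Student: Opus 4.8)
The plan is to obtain Theorem~\ref{thos2} as an immediate corollary of the general estimate in Theorem~\ref{th1.2}, so essentially all of the work consists of checking that the Orlicz space $X(\mathbb{X}):=L^{\Phi}(\mathbb{X})$ fits the hypotheses of that theorem for an admissible choice of parameters. First I would record the bookkeeping identification that makes this reduction legitimate: when $X(\mathbb{X})=L^{\Phi}(\mathbb{X})$, the weak BQBF space $WX(\mathbb{X})$ of Definition~\ref{de2.1} is the weak Orlicz space $WL^{\Phi}(\mathbb{X})$, so the weak Hardy space $WH_{X,L}(\mathbb{X})$ of Definition~\ref{de2.2} coincides with $WH^{\Phi}_{L}(\mathbb{X})$ with the same quasi-norm, and $(I+L)^{-\beta/2}e^{i\tau L^{\gamma/2}}$ is the same operator on both. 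Hence it suffices to verify Assumptions~\ref{as1.1} and \ref{as1.2} for $L^{\Phi}(\mathbb{X})$.

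The parameter choice is exactly the one used in the proof of Theorem~\ref{thos1}: take $p:=p^-_{\Phi}$, $q_0:=2$, and fix $s_0\in(0,\min\{1,r^-_{\Phi}\})$ close enough to $\min\{1,r^-_{\Phi}\}$ that $\gamma n\bigl(\frac1{s_0}-\frac1{2}\bigr)\le\beta$; this is possible since, by hypothesis, $\beta\ge\gamma n\bigl(\frac1{\min\{1,r^-_{\Phi}\}}-\frac1{2}\bigr)$ and $\frac1{s_0}\downarrow\frac1{\min\{1,r^-_{\Phi}\}}$ as $s_0\uparrow\min\{1,r^-_{\Phi}\}$. These choices satisfy the structural constraints $s_0\in(0,\min\{p,1\}]$ and $q_0\in(s_0,2]$ of Theorem~\ref{th1.2}. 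Then Assumption~\ref{as1.1} for $L^{\Phi}(\mathbb{X})$ with this $p$ follows from \cite[Theorem~6.6]{fmy20}; and for Assumption~\ref{as1.2} one checks, as in the proof of Theorem~\ref{thos1}, that $[L^{\Phi}(\mathbb{X})]^{1/s_0}$ is a BBF space and, using \cite[p.~61, Proposition~4 and p.~100, Proposition~1]{rr91} together with \cite{shyy17}, identifies $\bigl[\bigl([L^{\Phi}(\mathbb{X})]^{1/s_0}\bigr)'\bigr]^{1/(2/s_0)'}$ with an Orlicz space $L^{\Psi}(\mathbb{X})$ whose lower type $\frac{(p^+_{\Phi}/s_0)'}{(2/s_0)'}$ exceeds $1$ (this is where $p^+_{\Phi}<2$ enters), so $\mathcal{M}$ is bounded on it again by \cite[Theorem~6.6]{fmy20}. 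With both Assumptions in hand, I would invoke Theorem~\ref{th1.2} with $X(\mathbb{X})=L^{\Phi}(\mathbb{X})$, the parameters $p,q_0,s_0,\beta,\gamma$ above and the given $r\in(0,1]$, obtaining $\|(I+L)^{-\beta/2}e^{i\tau L^{\gamma/2}}f\|_{WH^{\Phi}_{L}(\mathbb{X})}\le C(1+|\tau|)^{n(\frac1{s_0}-\frac{r}{2})}\|f\|_{WH^{\Phi}_{L}(\mathbb{X})}$ for all $\tau\in\mathbb{R}$ and $f\in WH^{\Phi}_{L}(\mathbb{X})$, which is the assertion.

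I do not expect a genuinely new analytic obstacle here: the substance lies entirely in Theorems~\ref{th1.1}--\ref{th1.2}, and the verification of Assumptions~\ref{as1.1}--\ref{as1.2} for Orlicz spaces is already carried out in the proof of Theorem~\ref{thos1}. The one point that needs a little care is the joint choice of $s_0$: it must be small enough (strictly below $\min\{1,r^-_{\Phi}\}$) for the Orlicz duality computation behind Assumption~\ref{as1.2} to produce an $L^{\Psi}(\mathbb{X})$ of lower type $>1$, yet large enough that the constraint $\beta\ge\gamma n(\frac1{s_0}-\frac1{2})$ imposed by Theorem~\ref{th1.2} is respected; the latter is guaranteed by the stated range of $\beta$ and the continuity of $s_0\mapsto 1/s_0$. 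Thus the proof reduces to assembling these already-available ingredients.
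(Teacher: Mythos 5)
Your reduction --- verify Assumptions \ref{as1.1} and \ref{as1.2} for $L^{\Phi}(\mathbb{X})$ with $p:=p^-_\Phi$, $q_0:=2$, and an admissible $s_0$, then invoke Theorem \ref{th1.2} --- is exactly the route the paper intends: it omits the details for Theorem~\ref{thos2} precisely because they repeat the verification carried out in the proof of Theorem~\ref{thos1}. The one inaccuracy is in your last paragraph, where you claim the constraint $\beta\ge\gamma n(\tfrac1{s_0}-\tfrac12)$ can always be satisfied with $s_0$ \emph{strictly} below $\min\{1,p^-_\Phi\}$ ``by continuity.'' At the closed endpoint $\beta=\gamma n(\tfrac1{\min\{1,p^-_\Phi\}}-\tfrac12)$, which the stated hypothesis allows, every $s_0<\min\{1,p^-_\Phi\}$ forces $\gamma n(\tfrac1{s_0}-\tfrac12)>\beta$, so no admissible strictly smaller $s_0$ exists. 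The repair is to take $s_0=\min\{1,p^-_\Phi\}$ itself: the Orlicz duality computation still produces lower type
$$\frac{(p^+_\Phi/s_0)'}{(2/s_0)'}=\frac{p^+_\Phi(2-s_0)}{2(p^+_\Phi-s_0)}>1$$
precisely because $p^+_\Phi<2$, so Assumption~\ref{as1.2} still holds at the endpoint; alternatively one restricts the theorem to the open $\beta$-range. This defect is largely inherited from the paper, whose own statement carries an $s_0$ in the conclusion's exponent that is never introduced in the hypotheses, but you should not rely on a continuity argument that fails at the boundary your hypothesis explicitly includes.
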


\begin{theorem}\label{thos2x}
Let $\Phi$ be an Orlicz function
with positive lower type $p^-_{\Phi}$ and positive upper type
$p^+_{\Phi}$. Assume that $0<p^-_{\Phi}\leq p^+_{\Phi}<2$.
Assume that $L$ is a non-negative self-adjoint operator on $L^2(\mathbb{X})$
satisfying the Davies-Gaffney estimate \eqref{e1.4}. Let $0<\gamma\neq1$,
$\beta\in[\gamma n(\frac{1}{\min\{1,r^-_{\Phi}\}}-\frac{1}{2}),\infty)$, and $r\in(0,1]$. Then there exists a positive
constant $C$ such that, for any $\tau\in \mathbb{R}$ and $f\in H^{\Phi}_{L}(\mathbb{X})$,
\begin{align*}
\lf\|(I+L)^{-\beta/2}e^{i\tau L^{\gamma/2}}f\r\|_{H^{\Phi}_{L}(\mathbb{X})}\leq C\lf(1+|\tau|\r)^{n(\frac{1}{s_0}-\frac{r}{2})}\|f\|_{H^{\Phi}_{L}(\mathbb{X})}.
\end{align*}
\end{theorem}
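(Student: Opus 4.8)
The plan is to obtain Theorem \ref{thos2x} as a direct specialization of Theorem \ref{co1.1} to the ball quasi-Banach function space $X(\mathbb{X}):=L^{\Phi}(\mathbb{X})$, for which $H^{\Phi}_L(\mathbb{X})=H_{X,L}(\mathbb{X})$. Thus the whole argument reduces to checking that $L^{\Phi}(\mathbb{X})$ is a BQBF space satisfying Assumptions \ref{as1.1} and \ref{as1.2} for an admissible triple $(p,s_0,q_0)$ that is compatible with the given $\beta$, and then quoting Theorem \ref{co1.1}.

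First I would recall — exactly as in the proof of Theorem \ref{thos1} — that $L^{\Phi}(\mathbb{X})$ is a BQBF space (see \cite[Section 8.3]{syy22b}) and that, with $p:=p^-_{\Phi}$, $q_0:=2$, and any $s_0\in(0,\min\{1,r^-_{\Phi}\})$, it satisfies Assumption \ref{as1.1} for this $p$ (by \cite[Theorem 6.6]{fmy20}, using $p^-_{\Phi}>0$) and Assumption \ref{as1.2} for these $s_0$ and $q_0$ (via the identification $[([L^{\Phi}]^{1/s_0})']^{1/(2/s_0)'}=L^{\Psi}$ with $\Psi$ an Orlicz function of lower type $(p^+_{\Phi}/s_0)'/(2/s_0)'>1$, whence $\mathcal{M}$ is bounded on $L^{\Psi}(\mathbb{X})$). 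Since $L$ satisfies the Davies--Gaffney estimate \eqref{e1.4} and $0<\gamma\neq1$ by hypothesis, all the structural assumptions of Theorem \ref{co1.1} are in force.

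Next I would fix the value of $s_0$. The hypothesis $\beta\geq\gamma n(\frac{1}{\min\{1,r^-_{\Phi}\}}-\frac12)$ ensures that one may take $s_0\in(0,\min\{1,r^-_{\Phi}\})$ close enough to $\min\{1,r^-_{\Phi}\}$ that $\beta\geq\gamma n(\frac{1}{s_0}-\frac12)$, i.e. $\beta\in[\gamma n(\frac{1}{s_0}-\frac12),\infty)$, which is the power-range required by Theorem \ref{co1.1}. Applying Theorem \ref{co1.1} with this $(p,s_0,q_0)$ then yields, for all $\tau\in\mathbb{R}$ and $f\in H^{\Phi}_L(\mathbb{X})$,
\begin{align*}
\lf\|(I+L)^{-\beta/2}e^{i\tau L^{\gamma/2}}f\r\|_{H^{\Phi}_L(\mathbb{X})}\leq C\lf(1+|\tau|\r)^{n(\frac{1}{s_0}-\frac12)}\|f\|_{H^{\Phi}_L(\mathbb{X})},
\end{align*}
which is the asserted estimate (and, when $\Phi(t)=t^r$, recovers Corollary \ref{co1.2}).

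I expect no genuine analytic obstacle here: everything is already contained in Theorem \ref{co1.1} together with the Orlicz-space facts recorded in the proof of Theorem \ref{thos1}. The only subtle point is the bookkeeping in the previous paragraph — the verification of Assumption \ref{as1.2} forces $s_0<\min\{1,r^-_{\Phi}\}$, while the constraint $\beta\geq\gamma n(\frac{1}{s_0}-\frac12)$ forbids $s_0$ from being too small — and checking that these two requirements are simultaneously satisfiable, which is precisely what the stated lower bound on $\beta$ guarantees. If one prefers a self-contained proof, one can instead re-run the proof of Theorem \ref{co1.1} (which is patterned on that of Theorem \ref{th1.2}) with the abstract space $X$ replaced by $L^{\Phi}$ throughout, but invoking Theorem \ref{co1.1} as a black box is the cleanest route.
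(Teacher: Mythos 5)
Your proposal is correct and follows the paper's intended route exactly: the omitted proof of Theorem \ref{thos2x} consists of verifying Assumptions \ref{as1.1} and \ref{as1.2} for $L^{\Phi}(\mathbb{X})$ exactly as in the proof of Theorem \ref{thos1}, and then applying Theorem \ref{co1.1}. One bookkeeping caveat: your final paragraph asserts that the stated lower bound on $\beta$ guarantees the two requirements on $s_0$ (that $s_0<\min\{1,p^-_{\Phi}\}$ and that $\beta\ge\gamma n(\frac{1}{s_0}-\frac12)$) are simultaneously satisfiable, but this fails exactly at the endpoint $\beta=\gamma n(\frac{1}{\min\{1,p^-_{\Phi}\}}-\frac12)$, since then every admissible $s_0<\min\{1,p^-_{\Phi}\}$ gives $\gamma n(\frac{1}{s_0}-\frac12)>\beta$ and Theorem \ref{co1.1} is not applicable. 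This reflects an imprecision in the paper's own hypothesis (compare the strict lower bound on $M$ in Theorem \ref{thos1}); for $\beta$ strictly above the threshold the argument goes through as you describe, and since $n(\frac{1}{s_0}-\frac12)\le n(\frac{1}{s_0}-\frac{r}{2})$ for $r\in(0,1]$, the bound from Theorem \ref{co1.1} does imply the stated one.
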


\begin{theorem}\label{thos3}
Let $(\mathbb{X},d,\mu)$ be a metric measure space satisfying the Ahlfors $n$-regular condition \eqref{e1.6},
and $\Phi$ be an Orlicz function
with positive lower type $p^-_{\Phi}$ and positive upper type
$p^+_{\Phi}$. Assume that $0<p^-_{\Phi}\leq p^+_{\Phi}<2$.
Assume that $L$ is a non-negative self-adjoint operator on $L^2(\mathbb{X})$
satisfying the Gaussian upper estimate \eqref{e1.7} and \eqref{e1.8}.
Let $\alpha>n(\frac{1}{\min\{1,p_{\Phi}^-\}}-\frac{1}{2})$ and $r\in(\frac{n/\min\{1,p_{\Phi}^-\}}{\alpha+n/2},1]$. Then there exists a positive constant $C$ such that, for any $\tau\in \mathbb{R}$ and $f\in WH^{\Phi}_{L}(\mathbb{X})$,
\begin{align*}
\lf\|L^{i\tau}f\r\|_{WH^{\Phi}_{L}(\mathbb{X})}\leq C\lf(1+|\tau|\r)^{n(\frac{1}{s_0}-\frac{r}{2})}\|f\|_{WH^{\Phi}_{L}(\mathbb{X})}.
\end{align*}
\end{theorem}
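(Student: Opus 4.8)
The plan is to obtain Theorem \ref{thos3} as a direct specialization of Theorem \ref{th1.3} to the ball quasi-Banach function space $X(\mathbb{X}):=L^{\Phi}(\mathbb{X})$. First I would fix the parameters exactly as in the proof of Theorem \ref{thos1}: take $p:=p_{\Phi}^-$ and $q_0:=2$. By the argument already carried out there (using \cite[Theorem 6.6]{fmy20} together with the duality and convexification facts for Orlicz spaces from \cite{rr91} and the lower-type identification from \cite{shyy17}), the space $L^{\Phi}(\mathbb{X})$ satisfies Assumption \ref{as1.1} for this $p$ and Assumption \ref{as1.2} for this $q_0$ and every $s_0\in(0,\min\{1,p_{\Phi}^-\})$. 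Since $(\mathbb{X},d,\mu)$ is Ahlfors $n$-regular and $L$ is assumed to satisfy both \eqref{e1.7} and \eqref{e1.8}, all the structural hypotheses of Theorem \ref{th1.3} are then in place, and the only remaining task is to match the exponent conditions.

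The one point that requires a short argument rather than a citation is the choice of $s_0$, so that the exponent conditions of Theorem \ref{th1.3} --- namely $\alpha>n(\frac{1}{s_0}-\frac{1}{2})$ and $r\in(\frac{n/s_0}{\alpha+n/2},1]$ --- are compatible with the hypotheses $\alpha>n(\frac{1}{\min\{1,p_{\Phi}^-\}}-\frac{1}{2})$ and $r\in(\frac{n/\min\{1,p_{\Phi}^-\}}{\alpha+n/2},1]$ assumed here. Both $s_0\mapsto n(\frac{1}{s_0}-\frac{1}{2})$ and $s_0\mapsto\frac{n/s_0}{\alpha+n/2}$ are continuous and decrease as $s_0$ increases; since the two hypothesized inequalities are strict when the relevant quantities are evaluated at $s_0=\min\{1,p_{\Phi}^-\}$, I would pick $s_0\in(0,\min\{1,p_{\Phi}^-\})$ close enough to $\min\{1,p_{\Phi}^-\}$ that both $\alpha>n(\frac{1}{s_0}-\frac{1}{2})$ and $r>\frac{n/s_0}{\alpha+n/2}$ continue to hold. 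For such $s_0$ one also has $s_0<1<2=q_0$, hence $q_0\in(s_0,2]$, and $s_0\in(0,\min\{p_{\Phi}^-,1\}]$, so every hypothesis of Theorem \ref{th1.3} is satisfied with $X(\mathbb{X})=L^{\Phi}(\mathbb{X})$.

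Applying Theorem \ref{th1.3} with this data gives, for all $\tau\in\mathbb{R}$ and $f\in WH_{X,L}(\mathbb{X})$,
\begin{align*}
\lf\|L^{i\tau}f\r\|_{WH_{X,L}(\mathbb{X})}\leq C\lf(1+|\tau|\r)^{n(\frac{1}{s_0}-\frac{r}{2})}\|f\|_{WH_{X,L}(\mathbb{X})}.
\end{align*}
Finally I would unwind the notation: with $X(\mathbb{X})=L^{\Phi}(\mathbb{X})$ one has $WX(\mathbb{X})=WL^{\Phi}(\mathbb{X})$ and therefore $WH_{X,L}(\mathbb{X})=WH^{\Phi}_{L}(\mathbb{X})$, so the displayed inequality is precisely the assertion of Theorem \ref{thos3} (with $s_0$ as chosen above). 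I do not anticipate any genuine obstacle: the analytic substance lives entirely in Theorem \ref{th1.3} and in the verification of Assumptions \ref{as1.1}--\ref{as1.2} for Orlicz spaces already established in the proof of Theorem \ref{thos1}; the only delicate point is the elementary openness argument that produces an admissible $s_0$ strictly below $\min\{1,p_{\Phi}^-\}$ without violating the hypotheses on $\alpha$ and $r$.
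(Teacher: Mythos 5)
Your proposal is correct and takes essentially the same route as the paper, which omits the proof of Theorem \ref{thos3} and simply remarks that it is "similar to that of Theorem \ref{thos1}": verify Assumptions \ref{as1.1} and \ref{as1.2} for $X(\mathbb{X})=L^{\Phi}(\mathbb{X})$ (as in the proof of Theorem \ref{thos1}) and then invoke the abstract Theorem \ref{th1.3}. Your treatment of the choice of $s_0\in(0,\min\{1,p^-_{\Phi}\})$ near the endpoint via the openness of the two strict exponent inequalities is the one small point the paper leaves implicit, and you handle it correctly; it also explains why the exponent in the final bound appears as $n(\frac{1}{s_0}-\frac{r}{2})$ for the internally chosen $s_0$ rather than in closed form in terms of $p^-_{\Phi}$.
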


\begin{theorem}\label{thos3x}
Let $(\mathbb{X},d,\mu)$ be a metric measure space satisfying the Ahlfors $n$-regular condition \eqref{e1.6},
and $\Phi$ be an Orlicz function
with positive lower type $p^-_{\Phi}$ and positive upper type
$p^+_{\Phi}$. Assume that $0<p^-_{\Phi}\leq p^+_{\Phi}<2$.
Assume that $L$ is a non-negative self-adjoint operator on $L^2(\mathbb{X})$
satisfying the Gaussian upper estimate \eqref{e1.7} and \eqref{e1.8}.
Let $\alpha>n(\frac{1}{\min\{1,p_{\Phi}^-\}}-\frac{1}{2})$ and $r\in(\frac{n/\min\{1,p_{\Phi}^-\}}{\alpha+n/2},1]$. Then there exists a positive constant $C$ such that, for any $\tau\in \mathbb{R}$ and $f\in H^{\Phi}_{L}(\mathbb{X})$,
\begin{align*}
\lf\|L^{i\tau}f\r\|_{H^{\Phi}_{L}(\mathbb{X})}\leq C\lf(1+|\tau|\r)^{n(\frac{1}{s_0}-\frac{r}{2})}\|f\|_{H^{\Phi}_{L}(\mathbb{X})}.
\end{align*}
\end{theorem}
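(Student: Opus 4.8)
The plan is to obtain Theorem \ref{thos3x} as the specialization of Theorem \ref{co1.4} to the Orlicz setting $X(\mathbb{X}):=L^{\Phi}(\mathbb{X})$, in complete analogy with the way Theorem \ref{thos2x} is deduced from Theorem \ref{co1.1} and Theorem \ref{thos3} from Theorem \ref{th1.3}. First I would fix the parameters: set $p:=p^-_{\Phi}$, $q_0:=2$, and pick $s_0\in(0,\min\{1,p^-_{\Phi}\})$ (the same $s_0$ that appears in the conclusion). With these choices $q_0=2\in(s_0,2]$, and by the argument already carried out in the proof of Theorem \ref{thos1} --- which uses \cite[Theorem 6.6]{fmy20} for Assumption \ref{as1.1}, and the identification of the relevant convexified associate space as an Orlicz space of positive lower type in $(1,\infty)$, via \cite[p.\,61, Proposition 4 and p.\,100, Proposition 1]{rr91} together with \cite[Theorem 6.6]{fmy20}, for Assumption \ref{as1.2} --- the Orlicz space $L^{\Phi}(\mathbb{X})$ is a BQBF space satisfying both Assumptions \ref{as1.1} and \ref{as1.2}. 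The remaining hypotheses of Theorem \ref{co1.4}, namely that $(\mathbb{X},d,\mu)$ is Ahlfors $n$-regular \eqref{e1.6} and that $L$ satisfies the Gaussian upper bound \eqref{e1.7} and the weighted $L^2$ estimate \eqref{e1.8}, are exactly the standing assumptions of Theorem \ref{thos3x}.

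The next step is to verify the hypothesis on $\alpha$. Theorem \ref{co1.4} only requires $\alpha>\frac{n}{2}$, whereas here we are given $\alpha>n(\frac{1}{\min\{1,p^-_{\Phi}\}}-\frac{1}{2})$; since $\min\{1,p^-_{\Phi}\}\le1$ forces $\frac{1}{\min\{1,p^-_{\Phi}\}}-\frac{1}{2}\ge\frac{1}{2}$, this is at least as strong as $\alpha>\frac{n}{2}$. Hence Theorem \ref{co1.4} applies with $X(\mathbb{X})=L^{\Phi}(\mathbb{X})$; recalling that under this identification $H_{X,L}(\mathbb{X})$ reduces to $H^{\Phi}_{L}(\mathbb{X})$, it yields a positive constant $C$ such that, for all $\tau\in\mathbb{R}$ and $f\in H^{\Phi}_{L}(\mathbb{X})$,
$$\lf\|L^{i\tau}f\r\|_{H^{\Phi}_{L}(\mathbb{X})}\le C\lf(1+|\tau|\r)^{n(\frac{1}{s_0}-\frac{1}{2})}\|f\|_{H^{\Phi}_{L}(\mathbb{X})}.$$

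Finally, since $r\in(\frac{n/\min\{1,p^-_{\Phi}\}}{\alpha+n/2},1]\subset(0,1]$, we have $\frac{1}{s_0}-\frac{1}{2}\le\frac{1}{s_0}-\frac{r}{2}$, and because $1+|\tau|\ge1$ the factor $(1+|\tau|)^{n(1/s_0-1/2)}$ is dominated by $(1+|\tau|)^{n(1/s_0-r/2)}$; substituting this bound gives the asserted inequality and completes the argument. I do not anticipate any genuine obstacle here: all the substance is contained in the verification that $L^{\Phi}$ satisfies Assumptions \ref{as1.1}--\ref{as1.2}, which is already available from the proof of Theorem \ref{thos1}, together with a trivial monotonicity-in-the-exponent comparison. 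The only point that deserves a moment's care is the bookkeeping of $s_0$ --- one must choose $s_0<\min\{1,p^-_{\Phi}\}$ so that the range of $s_0$ permitted by the Orlicz-space verification of Assumption \ref{as1.2} is nonempty --- which is automatic since $p^-_{\Phi}>0$.
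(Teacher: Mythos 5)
Your proof is correct and takes the same route the paper intends: the paper explicitly says the proofs of Theorems \ref{thos2}, \ref{thos2x}, \ref{thos3}, and \ref{thos3x} are ``similar to that of Theorem \ref{thos1},'' i.e.\ one verifies that $L^{\Phi}(\mathbb{X})$ satisfies Assumptions \ref{as1.1} and \ref{as1.2} (via \cite[Theorem 6.6]{fmy20} and \cite[p.\,61, Prop.\,4; p.\,100, Prop.\,1]{rr91}) and then invokes the relevant abstract result, which here is Theorem \ref{co1.4}. Your final observation is worth keeping explicit: Theorem \ref{co1.4} delivers the exponent $n(\frac{1}{s_0}-\frac{1}{2})$, and since $r\le 1$ and $1+|\tau|\ge 1$, this is dominated by the stated exponent $n(\frac{1}{s_0}-\frac{r}{2})$, so the lower bound on $r$ (namely $r>\frac{n/\min\{1,p^-_\Phi\}}{\alpha+n/2}$) plays no role in this strong-Hardy version --- a feature of the paper's formulation (inherited by copy from the weak-Hardy Theorem \ref{thos3}) rather than a defect of your argument.
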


The proofs of Theorem \ref{thos2}, \ref{thos2x}, \ref{thos3} and \ref{thos3x} are similar to that of Theorem \ref{thos1}, we omit the details here.

\begin{remark}
 To the best of our knowledge, when it comes to the Euclidean spaces setting $(\mathbb{X}:=\rn)$, Theorems \ref{thos2}, \ref{thos2x}, \ref{thos3} and \ref{thos3x} are also completely
new.
\end{remark}

\subsection{Variable Lebesgue Spaces }
\hskip\parindent
In this subsection, we recall the definition of variable Lebesgue spaces and refer the readers to \cite{cf13,dhh11}
for more details. In what following, we say that a $\mu$-measurable function $p(\cdot):\mathbb{X}\rightarrow(0,\fz]$ is a variable
exponent. For any variable exponent $p(\cdot)$, let $\tilde{p}^-$ denote its essential infimum and $\tilde{p}^+$
its essential supremum.
\begin{definition} Let $p(\cdot)$ be a variable exponent with $0<\tilde{p}^-\leq \tilde{p}^+\leq\fz$, and
$$\mathbb{X}_{\fz}:=\{x\in\mathbb{X}:p(x)=\fz\}.$$
The variable Lebesgue space $L^{p(\cdot)}(\mathbb{X})$ is defined to be the set of all the functions
$f\in\mathfrak{U}(\mathbb{X})$ such that
$$
\|f\|_{L^{p(\cdot)}(\mathbb{X})}:=\inf\lf\{\lambda\in(0,\fz):\rho_{p(\cdot)}
(\frac{f}{\lambda})\leq1\r\}<\fz,$$
where, for any $f\in\mathfrak{U}(X)$,
$$\rho_{p(\cdot)}(f):=\int_{\mathbb{X}\backslash \mathbb{X}_{\fz}}
|f(x)|^{p(x)}\,d\mu(x)+\|f\|_{L^{\fz}(\mathbb{X}_{\fz})}.$$
\end{definition}

Let $p(\cdot)$ be as the above definition. Observe that $L^{p(\cdot)}(\mathbb{X})$
 is a quasi-Banach function
space and hence a BQBF space (see e.g. \cite[Remark 2.7(iv)]{yhyy23}). If
$X(\mathbb{X}) =L^{p(\cdot)}(\mathbb{X})$, then $WX(\mathbb{X})$ as in Definition \ref{de2.1} becomes the variable weak Lebesgue
space $WL^{p(\cdot)}(\mathbb{X})$.
Furthermore, we denote
$WH_{L,X}(\mathbb{X})$, $WH^M_{X,L,\mathrm{atom}}(\mathbb{X})$, and
$WH^{M,\varepsilon}_{X,L,\mathrm{mol}}(\mathbb{X})$ by $WH^{p(\cdot)}_L(\mathbb{X})$, $WH^{p(\cdot),M}_{L,\mathrm{atom}}(\mathbb{X})$, and
$WH^{p(\cdot),M,\varepsilon}_{L,\mathrm{mol}}(\mathbb{X})$, respectively.

Let $x_0\in\mathbb{X}$ be the fixed point. Recall that a variable exponent $p(\cdot)$, with $p^+<\fz$, is said to be globally log-H\"{o}lder continuous if there exist constants
$C\in(0,\fz)$
and $p_{\fz}\in\mathbb{R}$ such that, for any $x, y\in\mathbb{X}$,
$$|p(x)-p(y)|\leq C\frac{1}{\log(e+\frac{1}{d(x,\,y)})}$$
and
$$|p(x)-p_{\fz}|\leq C\frac{1}{\log(e+d(x,\,x_0))}.$$

\begin{theorem}\label{thvs1}
 Let $L$ be a non-negative self-adjoint operator on $L^2(\mathbb{X})$
satisfying the Davies-Gaffney estimate \eqref{e1.4}, and $p(\cdot):\mathbb{X}\rightarrow(0,\fz)$ be
log-H\"{o}lder continuous. Assume that $0<\tilde{p}^-\leq \tilde{p}^+<2$,
$M \in(\frac n2[\frac{1}{\min\{1,\tilde{p}^-\}}-\frac12],\fz)\cap\mathbb{N}$,
and $\varepsilon\in(\frac{n}{\min\{1,\tilde{p}^-\}},\fz)$. The the spaces
$WH^{p(\cdot)}_L(\mathbb{X})$, $WH^{p(\cdot),M}_{L,\mathrm{atom}}(\mathbb{X})$, and
$WH^{p(\cdot),M,\varepsilon}_{L,\mathrm{mol}}(\mathbb{X})$
coincide with equivalent quasi-norms.
\end{theorem}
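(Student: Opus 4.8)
The plan is to reduce Theorem~\ref{thvs1} to Theorem~\ref{th1.1}, exactly along the lines of the proof of Theorem~\ref{thos1}: it suffices to show that the variable Lebesgue space $X(\mathbb{X}):=L^{p(\cdot)}(\mathbb{X})$ is a ${\rm{BQBF}}$ space satisfying both Assumption~\ref{as1.1} and Assumption~\ref{as1.2} for a suitable choice of the parameters $p$, $s_0$, and $q_0$; once this is done, Theorem~\ref{th1.1} immediately yields that $WH^{p(\cdot)}_L(\mathbb{X})$, $WH^{p(\cdot),M}_{L,\mathrm{atom}}(\mathbb{X})$, and $WH^{p(\cdot),M,\varepsilon}_{L,\mathrm{mol}}(\mathbb{X})$ coincide with equivalent quasi-norms. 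That $L^{p(\cdot)}(\mathbb{X})$ is a ${\rm{BQBF}}$ space is known (see, for instance, \cite[Remark~2.7(iv)]{yhyy23}). As the initial choice of parameters, I would take $p:=\tilde{p}^-$, $q_0:=2$ (note $q_0\in(s_0,2]$ will hold), and fix $s_0\in(0,\min\{1,\tilde{p}^-\})$ close enough to $\min\{1,\tilde{p}^-\}$ so that $M>\frac{n}{2}(\frac{1}{s_0}-\frac{1}{2})$ and $\varepsilon>\frac{n}{s_0}$ hold simultaneously; this is possible precisely because of the hypotheses $M\in(\frac{n}{2}[\frac{1}{\min\{1,\tilde{p}^-\}}-\frac{1}{2}],\fz)\cap\mathbb{N}$ and $\varepsilon\in(\frac{n}{\min\{1,\tilde{p}^-\}},\fz)$, together with the monotonicity of $\frac{1}{s_0}\mapsto\frac{n}{2}(\frac{1}{s_0}-\frac{1}{2})$ and $\frac{1}{s_0}\mapsto\frac{n}{s_0}$.

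Next I would verify Assumption~\ref{as1.1} for this $p$. Since $p(\cdot)$ is globally log-H\"{o}lder continuous with $\tilde{p}^->0$, for any $t\in(0,\tilde{p}^-)$ the exponent $p(\cdot)/t$ is again globally log-H\"{o}lder continuous, has essential infimum $\tilde{p}^-/t>1$, and has finite essential supremum $\tilde{p}^+/t$; hence, by the known scalar and vector-valued boundedness of the Hardy--Littlewood maximal operator on variable Lebesgue spaces (see, for instance, \cite{cf13,dhh11}), the Fefferman--Stein vector-valued maximal inequality holds on $L^{p(\cdot)/t}(\mathbb{X})=X^{1/t}(\mathbb{X})$ for every $u\in(1,\infty)$. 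This is exactly the content of Assumption~\ref{as1.1}.

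Then I would verify Assumption~\ref{as1.2} for $s_0$ and $q_0=2$. Since $s_0<\tilde{p}^-$, the exponent $p(\cdot)/s_0$ has essential infimum $\tilde{p}^-/s_0>1$ and essential supremum $\tilde{p}^+/s_0<\fz$, so $X^{1/s_0}(\mathbb{X})=L^{p(\cdot)/s_0}(\mathbb{X})$ is a ${\rm{BBF}}$ space whose associate space is, up to equivalent norms, $(X^{1/s_0})'(\mathbb{X})=L^{(p(\cdot)/s_0)'}(\mathbb{X})$, where $(p(\cdot)/s_0)'$ is again globally log-H\"{o}lder continuous and bounded. Taking the $\frac{1}{(q_0/s_0)'}=\frac{1}{(2/s_0)'}$-convexification of this associate space gives $L^{q(\cdot)}(\mathbb{X})$ with $q(\cdot):=(p(\cdot)/s_0)'/(2/s_0)'$; because $\tilde{p}^+<2$ one has $\tilde{p}^+/s_0<2/s_0$, so the essential infimum of $q(\cdot)$ equals $(\tilde{p}^+/s_0)'/(2/s_0)'>1$, while $q(\cdot)$ remains log-H\"{o}lder continuous and bounded. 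Hence the Hardy--Littlewood maximal operator $\mathcal{M}$ is bounded on $L^{q(\cdot)}(\mathbb{X})=[(X^{1/s_0})']^{1/(2/s_0)'}(\mathbb{X})$ (again by \cite{cf13,dhh11}), which is precisely Assumption~\ref{as1.2}. Applying Theorem~\ref{th1.1} with $p=\tilde{p}^-$, the above $s_0$, and $q_0=2$ then completes the proof.

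The main obstacle here is not analytic but bookkeeping: one must check that after the operations $p(\cdot)\mapsto p(\cdot)/t$, $p(\cdot)\mapsto p(\cdot)'$, and division by a constant, the resulting exponents remain globally log-H\"{o}lder continuous and, crucially, have essential infimum strictly larger than $1$ and finite essential supremum (so that $\mathcal{M}$ is bounded on the corresponding variable Lebesgue spaces). All of these are standard facts in the variable-exponent literature, and, granting them, the argument is a direct transcription of the Orlicz-space case treated in Theorem~\ref{thos1}.
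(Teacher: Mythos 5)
Your proposal is correct and follows essentially the same route as the paper: choose $p:=\tilde{p}^-$, $q_0:=2$, and a suitable $s_0\in(0,\min\{1,\tilde{p}^-\})$, verify Assumptions \ref{as1.1} and \ref{as1.2} by identifying the relevant convexifications and associate spaces of $L^{p(\cdot)}(\mathbb{X})$ as variable Lebesgue spaces on which $\mathcal{M}$ is bounded (using log-H\"{o}lder continuity and $\tilde{p}^+<2$), and then invoke Theorem \ref{th1.1}. The only cosmetic difference is that the paper cites \cite[Theorems 2.7, Lemmas 2.5 and 2.9]{zsy16} where you appeal to the general variable-exponent literature \cite{cf13,dhh11}; the substance is identical.
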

\begin{proof}
Take $p:=\tilde{p}^-$, $q_0:=2$, and $s_0\in(0,\,\min\{1,\tilde{p}^-\})$ such that $M \in(\frac n2[\frac{1}{\min\{1,\tilde{p}^-\}}-\frac12],\fz)\cap\mathbb{N}$ and $\varepsilon\in(\frac{n}{s_0},\fz)$.
From \cite[Theorem 2.7]{zsy16}, we know that the BQBF space $X(\mathbb{X})=L^{p(\cdot)}(\mathbb{X})$ satisfies Assumption \ref{as1.1} for above exponents.
Moreover, by \cite[Lemma 2.9]{zsy16}, we obtain that
$$\lf[\lf(\lf[L^{p(\cdot)}(\mathbb{X})\r]^{\frac{1}{s_0}}\r)^{'}\r]^{\frac{1}{(2/s_0)^{'}}}=
L^{\frac{(p(\cdot)/s_0)^{'}}{(2/s_0)^{'}}}(\mathbb{X}),$$
where, for any $x\in\mathbb{X}$, $\frac{1}{(p(x)/s_0)^{'}}+\frac{1}{p(x)/s_0}=1$.
Therefore, by the assumption that $0<\tilde{p}^-\leq \tilde{p}^+<2$ and \cite[Lemma 2.5]{zsy16}, we obtain that the BQBF space $X(\mathbb{X})=L^{p(\cdot)}(\mathbb{X})$ satisfies Assumption \ref{as1.2} for above exponents. Therefore, all the assumption of Theorem \ref{th1.1} are satisfied, which implies the desired result. This completes the proof of Theorem \ref{thvs1}.
\end{proof}

\begin{theorem}\label{thvs2}
 Let $L$ be a non-negative self-adjoint operator on $L^2(\mathbb{X})$
satisfying the Davies-Gaffney estimate \eqref{e1.4}, and $p(\cdot):\mathbb{X}\rightarrow(0,\fz)$ be
log-H\"{o}lder continuous. Assume that $0<\tilde{p}^-\leq \tilde{p}^+<2$, $0<\gamma\neq1$,
$\beta\in[\gamma n(\frac{1}{\min\{1,\tilde{p}^-\}}-\frac{1}{2}),\infty)$ and $r\in(0,1]$. Then there exists a positive
constant $C$ such that, for any $\tau\in \mathbb{R}$ and $f\in WH^{p(\cdot)}_{L}(\mathbb{X})$,
\begin{align*}
\lf\|(I+L)^{-\beta/2}e^{i\tau L^{\gamma/2}}f\r\|_{WH^{p(\cdot)}_{L}(\mathbb{X})}\leq C\lf(1+|\tau|\r)^{n(\frac{1}{\min\{1,\tilde{p}^-\}}-\frac{r}{2})}\|f\|_{WH^{p(\cdot)}_{L}(\mathbb{X})}.
\end{align*}
\end{theorem}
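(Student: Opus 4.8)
The plan is to obtain Theorem \ref{thvs2} as a direct specialization of Theorem \ref{th1.2}, in exactly the way Theorem \ref{thvs1} was deduced from Theorem \ref{th1.1}: it suffices to exhibit a triple of parameters $(p,s_0,q_0)$, compatible with the constraint $\beta\in[\gamma n(\frac{1}{\min\{1,\tilde{p}^-\}}-\frac12),\infty)$, for which the variable Lebesgue space $X(\mathbb{X}):=L^{p(\cdot)}(\mathbb{X})$ is a BQBF space satisfying both Assumptions \ref{as1.1} and \ref{as1.2}; then \eqref{e1.5} is precisely the assertion.

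Concretely, I would take $p:=\tilde{p}^-$ and $q_0:=2$, and choose $s_0\le\min\{1,\tilde{p}^-\}$ — strictly below $\min\{1,\tilde{p}^-\}$ when $\tilde{p}^-<1$, and $s_0:=1$ when $\tilde{p}^-\ge1$ — so close to $\min\{1,\tilde{p}^-\}$ that $\beta\ge\gamma n(\frac{1}{s_0}-\frac12)$ still holds; this is possible since $s_0\mapsto\gamma n(\frac{1}{s_0}-\frac12)$ is continuous and $\beta\ge\gamma n(\frac{1}{\min\{1,\tilde{p}^-\}}-\frac12)$. That $L^{p(\cdot)}(\mathbb{X})$ is a BQBF space is \cite[Remark 2.7(iv)]{yhyy23}. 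Assumption \ref{as1.1} for the exponent $p=\tilde{p}^-$ follows from the log-Hölder continuity of $p(\cdot)$ and \cite[Theorem 2.7]{zsy16}. For Assumption \ref{as1.2} I would, via \cite[Lemma 2.9]{zsy16}, identify
$$\lf[\lf(\lf[L^{p(\cdot)}(\mathbb{X})\r]^{\frac{1}{s_0}}\r)'\r]^{\frac{1}{(2/s_0)'}}=L^{\frac{(p(\cdot)/s_0)'}{(2/s_0)'}}(\mathbb{X}),$$
observe that $L^{p(\cdot)/s_0}(\mathbb{X})$ is a BBF space because $p(\cdot)/s_0\ge1$, and use that $\tilde{p}^+<2$ makes the exponent $\frac{(p(\cdot)/s_0)'}{(2/s_0)'}$ log-Hölder with essential infimum strictly larger than $1$, so that $\mathcal{M}$ is bounded on it by \cite[Lemma 2.5]{zsy16}.

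With both assumptions in hand, Theorem \ref{th1.2} applied to $X(\mathbb{X})=L^{p(\cdot)}(\mathbb{X})$ gives, for every $r\in(0,1]$, $\tau\in\mathbb{R}$ and $f\in WH^{p(\cdot)}_L(\mathbb{X})$,
$$\lf\|(I+L)^{-\beta/2}e^{i\tau L^{\gamma/2}}f\r\|_{WH^{p(\cdot)}_L(\mathbb{X})}\le C(1+|\tau|)^{n(\frac{1}{s_0}-\frac r2)}\|f\|_{WH^{p(\cdot)}_L(\mathbb{X})}.$$
Since $s_0$ may be taken as close to $\min\{1,\tilde{p}^-\}$ as one wishes — and equal to it when $\tilde{p}^-\ge1$ — this yields the exponent $n(\frac{1}{\min\{1,\tilde{p}^-\}}-\frac r2)$ of Theorem \ref{thvs2}. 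I expect the only genuine obstacle to be the verification of Assumption \ref{as1.2}: this is the one step where both hypotheses $\tilde{p}^+<2$ and the log-Hölder continuity of $p(\cdot)$ are essential (they are precisely what guarantee boundedness of the Hardy–Littlewood maximal operator on the displayed convexification of the associate space), whereas all the analytic work concerning the operator $(I+L)^{-\beta/2}e^{i\tau L^{\gamma/2}}$ is already packaged inside Theorem \ref{th1.2}.
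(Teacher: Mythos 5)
Your overall strategy is exactly the paper's: verify that $L^{p(\cdot)}(\mathbb{X})$ satisfies Assumptions~\ref{as1.1} and~\ref{as1.2} for a suitable triple $(p,s_0,q_0)$ using \cite[Theorems 2.7, Lemma 2.5, Lemma 2.9]{zsy16}, and then read off the conclusion from Theorem~\ref{th1.2}. That is how the paper proves Theorem~\ref{thvs1} and what it says to do for Theorem~\ref{thvs2}.

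There is, however, a genuine flaw in your final step. You take $s_0$ strictly below $\min\{1,\tilde p^-\}$ when $\tilde p^-<1$, apply Theorem~\ref{th1.2}, obtain the exponent $n\big(\tfrac{1}{s_0}-\tfrac r2\big)$, and then argue that ``since $s_0$ may be taken as close to $\min\{1,\tilde p^-\}$ as one wishes'' the exponent $n\big(\tfrac{1}{\min\{1,\tilde p^-\}}-\tfrac r2\big)$ follows. This does not follow: for each admissible $s_0<\min\{1,\tilde p^-\}$ you get a bound of the form $C_{s_0}(1+|\tau|)^{n(1/s_0-r/2)}\|f\|$, and nothing you have said controls $C_{s_0}$ as $s_0\uparrow\min\{1,\tilde p^-\}$. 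A family of inequalities with decreasing exponent and possibly unbounded constants does not yield the endpoint inequality. Moreover, when $\beta$ equals exactly $\gamma n\big(\tfrac{1}{\min\{1,\tilde p^-\}}-\tfrac12\big)$, the hypothesis $\beta\ge\gamma n\big(\tfrac{1}{s_0}-\tfrac12\big)$ fails for every $s_0<\min\{1,\tilde p^-\}$, so the limiting family is empty in the boundary case you must cover.

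The correct and simpler move is to take $s_0:=\min\{1,\tilde p^-\}$ outright. Theorem~\ref{th1.2} allows $s_0\in(0,\min\{p,1\}]$, closed at the right, and with $p:=\tilde p^-$ this endpoint is admissible. One then only needs to check that Assumption~\ref{as1.2} still holds at $s_0=\min\{1,\tilde p^-\}$: the space $[L^{p(\cdot)}]^{1/s_0}=L^{p(\cdot)/s_0}(\mathbb{X})$ is a BBF space because $p(\cdot)/s_0\ge 1$ $\mu$-a.e., and the essential infimum of the exponent $\frac{(p(\cdot)/s_0)'}{(2/s_0)'}$ equals $\frac{(\tilde p^+/s_0)'}{(2/s_0)'}$, which is strictly greater than $1$ precisely because $\tilde p^+<2$; together with the log-H\"older continuity inherited from $p(\cdot)$, \cite[Lemma~2.5]{zsy16} gives the required boundedness of $\mathcal M$. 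With this choice, Theorem~\ref{th1.2} yields the exponent $n\big(\tfrac{1}{\min\{1,\tilde p^-\}}-\tfrac r2\big)$ directly, without any limiting argument, and the constraint on $\beta$ is exactly the hypothesis of Theorem~\ref{thvs2}.
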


\begin{theorem}\label{thvs2x}
 Let $L$ be a non-negative self-adjoint operator on $L^2(\mathbb{X})$
satisfying the Davies-Gaffney estimate \eqref{e1.4}, and $p(\cdot):\mathbb{X}\rightarrow(0,\fz)$ be
log-H\"{o}lder continuous. Assume that $0<\tilde{p}^-\leq \tilde{p}^+<2$, $0<\gamma\neq1$,
$\beta\in[\gamma n(\frac{1}{\min\{1,\tilde{p}^-\}}-\frac{1}{2}),\infty)$ and $r\in(0,1]$. Then there exists a positive
constant $C$ such that, for any $\tau\in \mathbb{R}$ and $f\in H^{p(\cdot)}_{L}(\mathbb{X})$,
\begin{align*}
\lf\|(I+L)^{-\beta/2}e^{i\tau L^{\gamma/2}}f\r\|_{H^{p(\cdot)}_{L}(\mathbb{X})}\leq C\lf(1+|\tau|\r)^{n(\frac{1}{\min\{1,\tilde{p}^-\}}-\frac{r}{2})}\|f\|_{H^{p(\cdot)}_{L}(\mathbb{X})}.
\end{align*}
\end{theorem}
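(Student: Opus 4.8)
The plan is to derive the estimate directly from Theorem~\ref{co1.1}, applied with the ball quasi-Banach function space $X(\mathbb{X}):=L^{p(\cdot)}(\mathbb{X})$. Hence the whole task reduces to checking that $L^{p(\cdot)}(\mathbb{X})$ is a ${\rm BQBF}$ space satisfying Assumptions~\ref{as1.1} and~\ref{as1.2} for a suitable admissible triple of parameters, and then reading off the exponent that Theorem~\ref{co1.1} produces.

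Concretely, I would take $p:=\tilde{p}^-$, $q_0:=2$, and $s_0:=\min\{1,\tilde{p}^-\}$, and verify the two assumptions arguing as in the proof of Theorem~\ref{thvs1} (now with this choice of $s_0$). For Assumption~\ref{as1.1}, the log-H\"{o}lder continuity of $p(\cdot)$ together with $\tilde{p}^+<\infty$ yields, via \cite[Theorem 2.7]{zsy16}, the vector-valued Fefferman--Stein inequality on $[L^{p(\cdot)}(\mathbb{X})]^{1/t}=L^{p(\cdot)/t}(\mathbb{X})$ for every $t\in(0,\tilde{p}^-)$. For Assumption~\ref{as1.2}, since $s_0=\min\{1,\tilde{p}^-\}\le\tilde{p}^-$ the exponent $p(\cdot)/s_0$ is at least $1$ $\mu$-almost everywhere, so $[L^{p(\cdot)}(\mathbb{X})]^{1/s_0}=L^{p(\cdot)/s_0}(\mathbb{X})$ is a ${\rm BBF}$ space; by \cite[Lemma 2.9]{zsy16},
$$\Big[\big([L^{p(\cdot)}(\mathbb{X})]^{1/s_0}\big)'\Big]^{1/(2/s_0)'}=L^{(p(\cdot)/s_0)'/(2/s_0)'}(\mathbb{X}),$$
and the hypothesis $\tilde{p}^+<2$ forces the essential infimum of the exponent $(p(\cdot)/s_0)'/(2/s_0)'$, which equals $(\tilde{p}^+/s_0)'/(2/s_0)'$, to be strictly larger than $1$ while preserving its log-H\"{o}lder continuity; hence \cite[Lemma 2.5]{zsy16} gives the boundedness of $\mathcal{M}$ on this space, which is Assumption~\ref{as1.2}.

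With all hypotheses of Theorem~\ref{co1.1} now in force for $X(\mathbb{X})=L^{p(\cdot)}(\mathbb{X})$ and $s_0=\min\{1,\tilde{p}^-\}$, and since the hypothesis $\beta\in[\gamma n(\frac{1}{\min\{1,\tilde{p}^-\}}-\frac{1}{2}),\infty)$ is exactly $\beta\ge\gamma n(\frac{1}{s_0}-\frac{1}{2})$, Theorem~\ref{co1.1} yields
$$\lf\|(I+L)^{-\beta/2}e^{i\tau L^{\gamma/2}}f\r\|_{H^{p(\cdot)}_{L}(\mathbb{X})}\le C(1+|\tau|)^{n(\frac{1}{\min\{1,\tilde{p}^-\}}-\frac{1}{2})}\|f\|_{H^{p(\cdot)}_{L}(\mathbb{X})},$$
which is the case $r=1$ of the asserted inequality; for $r\in(0,1)$ the claimed bound is weaker, since $n(\frac{1}{\min\{1,\tilde{p}^-\}}-\frac{1}{2})\le n(\frac{1}{\min\{1,\tilde{p}^-\}}-\frac{r}{2})$, so it follows as well. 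I expect the only genuine obstacle to be the verification of Assumption~\ref{as1.2}: identifying the convexified associate space as a variable Lebesgue space and confirming that its exponent remains log-H\"{o}lder continuous with essential infimum strictly above $1$, which is precisely where the restriction $\tilde{p}^+<2$ is used; the remaining steps --- the reduction to Theorem~\ref{co1.1} and the monotonicity comparison in $r$ --- are routine.
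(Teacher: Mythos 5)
Your overall strategy — reduce to Theorem~\ref{co1.1} with $X(\mathbb{X})=L^{p(\cdot)}(\mathbb{X})$ after verifying Assumptions~\ref{as1.1} and~\ref{as1.2} via \cite[Theorem~2.7, Lemmas~2.5 and~2.9]{zsy16} — is exactly the route the paper intends (it declares the proof ``similar to that of Theorem~\ref{thvs1}''). The place where you diverge, and where there is a genuine gap, is the choice $s_0:=\min\{1,\tilde p^-\}$ \emph{with equality}. In the proof of Theorem~\ref{thvs1} the paper explicitly chooses $s_0\in(0,\min\{1,\tilde p^-\})$ strictly, and for a good reason. If $\tilde p^-\le 1$ (so $s_0=\tilde p^-$) then the exponent $p(\cdot)/s_0$ has essential infimum exactly $1$, so $(p(\cdot)/s_0)'$ has essential supremum $\infty$, and consequently so does $(p(\cdot)/s_0)'/(2/s_0)'$; a direct computation gives $(p(x)/s_0)'/(2/s_0)'=\frac{p(x)(2-s_0)}{2(p(x)-s_0)}$, which blows up as $p(x)\downarrow\tilde p^-=s_0$. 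Your claim that the resulting exponent ``remains log-H\"older continuous'' is then false in the sense used in this paper, whose definition of log-H\"older continuity (and, presumably, the hypothesis of \cite[Lemma~2.5]{zsy16}) requires the essential supremum of the exponent to be finite. So the verification of Assumption~\ref{as1.2} does not go through as written, and this is not a side remark: it is precisely the step you yourself flagged as ``the only genuine obstacle.''

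To close the gap you have two options, both departing slightly from your text. (a) Follow the paper and pick $s_0<\min\{1,\tilde p^-\}$ strictly; then $(p(\cdot)/s_0)'$ is essentially bounded, Assumption~\ref{as1.2} checks cleanly, and the resulting exponent $n(\frac1{s_0}-\frac12)$ can be absorbed into $n(\frac1{\min\{1,\tilde p^-\}}-\frac r2)$ by taking $s_0$ close enough to $\min\{1,\tilde p^-\}$ whenever $r<1$; this does, however, force $\beta>\gamma n(\frac1{s_0}-\frac12)>\gamma n(\frac1{\min\{1,\tilde p^-\}}-\frac12)$, so it does not reach the boundary value of $\beta$ allowed in the theorem statement. (b) Keep $s_0=\min\{1,\tilde p^-\}$ but replace the appeal to \cite[Lemma~2.5]{zsy16} by a maximal-function bound on variable Lebesgue spaces that allows $q^+=\infty$ (which requires only that $1/q(\cdot)$ be log-H\"older continuous and $q^->1$); that statement is available in the variable-exponent literature but is not the one the paper cites, so you would have to supply the reference and the verification that $1/q(\cdot)$ inherits log-H\"older continuity from $p(\cdot)$. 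Either fix is routine, but as written the proposal quietly asserts a false regularity property at exactly the delicate step.
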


\begin{theorem}\label{thvs3}
Let $(\mathbb{X},d,\mu)$ be a metric measure space satisfying the Ahlfors $n$-regular condition \eqref{e1.6}.
Let $L$ be a non-negative self-adjoint operator on $L^2(\mathbb{X})$
satisfying the Gaussian upper estimate \eqref{e1.7} and \eqref{e1.8},
and $p(\cdot):\mathbb{X}\rightarrow(0,\fz)$ be
log-H\"{o}lder continuous. Assume that $0<\tilde{p}^-\leq \tilde{p}^+<2$,
 $\alpha>n(\frac{1}{\min\{1,\tilde{p}^-\}}-\frac{1}{2})$ and $r\in(\frac{n/\min\{1,\tilde{p}^-\}}{\alpha+n/2},1]$. Then there exists a positive constant $C$ such that, for any $\tau\in \mathbb{R}$ and $f\in WH^{p(\cdot)}_{L}(\mathbb{X})$,
\begin{align*}
\lf\|L^{i\tau}f\r\|_{WH^{p(\cdot)}_{L}(\mathbb{X})}\leq C\lf(1+|\tau|\r)^{n(\frac{1}{\min\{1,\tilde{p}^-\}}-\frac{r}{2})}\|f\|_{WH^{p(\cdot)}_{L}(\mathbb{X})}.
\end{align*}
\end{theorem}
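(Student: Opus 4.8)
The plan is to deduce Theorem~\ref{thvs3} directly from Theorem~\ref{th1.3}, so the only real task is to exhibit admissible parameters for which the variable Lebesgue space $L^{p(\cdot)}(\mathbb{X})$ fits the hypotheses of that theorem. Following the proof of Theorem~\ref{thvs1}, I would take $p:=\tilde{p}^-$, $q_0:=2$, and $s_0:=\min\{1,\tilde{p}^-\}$. Then $s_0\in(0,\min\{p,1\}]$, and since $\tilde{p}^+<2$ forces $s_0\le1<2$ we also have $q_0\in(s_0,2]$, so this triple is admissible in Theorem~\ref{th1.3}.

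Next I would verify the two structural assumptions, exactly as in the proof of Theorem~\ref{thvs1}. The space $L^{p(\cdot)}(\mathbb{X})$ is a BQBF space (see, e.g., \cite[Remark 2.7(iv)]{yhyy23}); since $p(\cdot)$ is globally log-Hölder continuous with $0<\tilde{p}^-\le\tilde{p}^+<2$, Assumption~\ref{as1.1} holds for $p=\tilde{p}^-$ by \cite[Theorem 2.7]{zsy16}. For Assumption~\ref{as1.2}, I would use \cite[Lemma 2.9]{zsy16} to identify, up to equivalent quasi-norms,
$$\lf[\lf(\lf[L^{p(\cdot)}(\mathbb{X})\r]^{1/s_0}\r)'\r]^{1/(2/s_0)'}=L^{(2/s_0)'(p(\cdot)/s_0)'}(\mathbb{X}),$$
observe that $[L^{p(\cdot)}(\mathbb{X})]^{1/s_0}=L^{p(\cdot)/s_0}(\mathbb{X})$ is a BBF space because $\tilde{p}^-/s_0\ge1$, and note that the variable exponent $(2/s_0)'(p(\cdot)/s_0)'$ has essential infimum bounded below by $(2/s_0)'=\frac{2}{2-s_0}>1$. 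Together with the log-Hölder continuity of $p(\cdot)$ and the assumption $\tilde{p}^+<2$, \cite[Lemma 2.5]{zsy16} then yields the boundedness of $\mathcal{M}$ on this convexification, so Assumption~\ref{as1.2} holds for the chosen $s_0$ and $q_0$.

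Finally I would match the remaining hypotheses with those of Theorem~\ref{thvs3}: $(\mathbb{X},d,\mu)$ is Ahlfors $n$-regular, $L$ is non-negative self-adjoint and satisfies \eqref{e1.7} and \eqref{e1.8}, and with $s_0=\min\{1,\tilde{p}^-\}$ the conditions $\alpha>n(\frac{1}{s_0}-\frac12)$ and $r\in(\frac{n/s_0}{\alpha+n/2},1]$ of Theorem~\ref{th1.3} are precisely $\alpha>n(\frac{1}{\min\{1,\tilde{p}^-\}}-\frac12)$ and $r\in(\frac{n/\min\{1,\tilde{p}^-\}}{\alpha+n/2},1]$. Applying Theorem~\ref{th1.3} with $X(\mathbb{X}):=L^{p(\cdot)}(\mathbb{X})$ then gives, for all $\tau\in\mathbb{R}$ and $f\in WH^{p(\cdot)}_L(\mathbb{X})$,
$$\lf\|L^{i\tau}f\r\|_{WH^{p(\cdot)}_{L}(\mathbb{X})}\le C\lf(1+|\tau|\r)^{n(\frac{1}{s_0}-\frac r2)}\|f\|_{WH^{p(\cdot)}_{L}(\mathbb{X})}=C\lf(1+|\tau|\r)^{n(\frac{1}{\min\{1,\tilde{p}^-\}}-\frac r2)}\|f\|_{WH^{p(\cdot)}_{L}(\mathbb{X})},$$
which is the claimed bound. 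The only delicate point is the verification of Assumption~\ref{as1.2} — the associate-space identification and the maximal-function estimate on its convexification — and this is carried out word for word as in the proof of Theorem~\ref{thvs1}; all the other steps are parameter bookkeeping, so the argument runs in parallel to the proof of Theorem~\ref{thos3}.
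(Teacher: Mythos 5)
The proposal follows the paper's intended route---verify Assumptions~\ref{as1.1} and~\ref{as1.2} for $X(\mathbb{X})=L^{p(\cdot)}(\mathbb{X})$ exactly as in the proof of Theorem~\ref{thvs1} and then invoke Theorem~\ref{th1.3}---so the overall strategy matches what the paper leaves implicit when it says the proof is ``similar to that of Theorem~\ref{thvs1}.''

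There is, however, one delicate point you should not gloss over: the choice $s_0:=\min\{1,\tilde{p}^-\}$. In the paper's own verification (proof of Theorem~\ref{thvs1}) the parameter $s_0$ is taken in the \emph{open} interval $(0,\min\{1,\tilde{p}^-\})$, and this is no accident. If you take the endpoint $s_0=\tilde{p}^-$ (when $\tilde{p}^-<1$), then $p(\cdot)/s_0$ has essential infimum equal to $1$, so the pointwise conjugate $(p(\cdot)/s_0)'$ has essential supremum equal to $\infty$; consequently the exponent $(2/s_0)'\,(p(\cdot)/s_0)'$ appearing in Assumption~\ref{as1.2} also has essential supremum $\infty$. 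You correctly note that its essential infimum is bounded below by $(2/s_0)'>1$, but you never address the unbounded supremum. The cited result \cite[Lemma~2.5]{zsy16} is typically stated for log-H\"older exponents with $1<q^-\le q^+<\infty$; if it does not cover $q^+=\infty$, then Assumption~\ref{as1.2} fails at your endpoint choice and the application of Theorem~\ref{th1.3} is not justified. (A similar issue arises when $\tilde{p}^-=1$ and $s_0=1$.)

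The underlying tension is that Theorem~\ref{th1.3} gives the exponent $n(\tfrac{1}{s_0}-\tfrac r2)$, so taking any admissible $s_0<\min\{1,\tilde{p}^-\}$---as the paper's own proof of Theorem~\ref{thvs1} does---produces a \emph{strictly larger} power of $(1+|\tau|)$ than the one claimed in the statement of Theorem~\ref{thvs3}. So either the endpoint choice $s_0=\min\{1,\tilde{p}^-\}$ must be justified (by checking that the maximal-function estimate survives the $q^+=\infty$ degeneracy), or the stated exponent in Theorem~\ref{thvs3} should carry an $\varepsilon$-loss. You should at minimum flag this discrepancy and explain which of the two resolutions you are adopting; as written, the proof silently passes to the boundary of the admissible parameter range without acknowledging that this lies outside what \cite[Lemma~2.5]{zsy16} is invoked for in the rest of the paper.
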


\begin{theorem}\label{thvs3x}
Let $(\mathbb{X},d,\mu)$ be a metric measure space satisfying the Ahlfors $n$-regular condition \eqref{e1.6}.
Let $L$ be a non-negative self-adjoint operator on $L^2(\mathbb{X})$
satisfying the Gaussian upper estimate \eqref{e1.7} and \eqref{e1.8},
and $p(\cdot):\mathbb{X}\rightarrow(0,\fz)$ be
log-H\"{o}lder continuous. Assume that $0<\tilde{p}^-\leq \tilde{p}^+<2$,
 $\alpha>n(\frac{1}{\min\{1,\tilde{p}^-\}}-\frac{1}{2})$ and $r\in(\frac{n/\min\{1,\tilde{p}^-\}}{\alpha+n/2},1]$. Then there exists a positive constant $C$ such that, for any $\tau\in \mathbb{R}$ and $f\in H^{p(\cdot)}_{L}(\mathbb{X})$,
\begin{align*}
\lf\|L^{i\tau}f\r\|_{H^{p(\cdot)}_{L}(\mathbb{X})}\leq C\lf(1+|\tau|\r)^{n(\frac{1}{\min\{1,\tilde{p}^-\}}-\frac{r}{2})}\|f\|_{H^{p(\cdot)}_{L}(\mathbb{X})}.
\end{align*}
\end{theorem}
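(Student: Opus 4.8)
The plan is to deduce Theorem \ref{thvs3x} from the general strong-Hardy-space estimate for imaginary powers, Theorem \ref{co1.4}, in exactly the way Theorem \ref{thvs1} was deduced from Theorem \ref{th1.1}: once we verify that the variable Lebesgue space $L^{p(\cdot)}(\mathbb{X})$ is a BQBF space satisfying Assumptions \ref{as1.1} and \ref{as1.2} for suitable parameters, the conclusion follows by specializing $X(\mathbb{X}):=L^{p(\cdot)}(\mathbb{X})$ in Theorem \ref{co1.4}, where $H_{X,L}(\mathbb{X})$ with $X=L^{p(\cdot)}(\mathbb{X})$ is denoted $H^{p(\cdot)}_L(\mathbb{X})$. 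Thus the substantive work is the verification of these structural hypotheses, together with the bookkeeping that matches the resulting power of $(1+|\tau|)$ to the one claimed here.

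First I would fix the parameters: take $p:=\tilde{p}^-$, $q_0:=2$, and $s_0\in(0,\min\{1,\tilde{p}^-\}]$ chosen close enough to $\min\{1,\tilde{p}^-\}$ that $\frac{1}{s_0}-\frac{1}{2}\le\frac{1}{\min\{1,\tilde{p}^-\}}-\frac{r}{2}$; since $r\le 1$ there is room for such a choice (when $\tilde{p}^-\ge 1$ one may even take $s_0=1$), and the admissible range $r\in(\frac{n/\min\{1,\tilde{p}^-\}}{\alpha+n/2},1]$ together with $\alpha>n(\frac{1}{\min\{1,\tilde{p}^-\}}-\frac{1}{2})$ is precisely what guarantees this slack. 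Recall that $L^{p(\cdot)}(\mathbb{X})$ is a quasi-Banach function space, hence a BQBF space (see, e.g., \cite[Remark 2.7(iv)]{yhyy23}). Since $p(\cdot)$ is globally log-H\"{o}lder continuous with $0<\tilde{p}^-\le\tilde{p}^+<2$, the vector-valued Fefferman-Stein maximal inequality of \cite[Theorem 2.7]{zsy16} shows that $L^{p(\cdot)}(\mathbb{X})$ satisfies Assumption \ref{as1.1} for this $p=\tilde{p}^-$.

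Next I would check Assumption \ref{as1.2}. Since $1/s_0\ge 1$ and $p(\cdot)/s_0$ is log-H\"{o}lder with essential infimum $\tilde{p}^-/s_0\ge 1$, the convexification $X^{1/s_0}(\mathbb{X})=L^{p(\cdot)/s_0}(\mathbb{X})$ is a BBF space. By \cite[Lemma 2.9]{zsy16},
\[\lf[\lf(\lf[L^{p(\cdot)}(\mathbb{X})\r]^{\frac{1}{s_0}}\r)^{'}\r]^{\frac{1}{(2/s_0)^{'}}}=L^{\frac{(p(\cdot)/s_0)^{'}}{(2/s_0)^{'}}}(\mathbb{X}),\]
and the exponent $\frac{(p(\cdot)/s_0)^{'}}{(2/s_0)^{'}}$ is again log-H\"{o}lder continuous with essential infimum $\frac{(\tilde{p}^+/s_0)^{'}}{(2/s_0)^{'}}$, which is strictly larger than $1$ exactly because $\tilde{p}^+<2$. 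Hence, by the boundedness of $\mathcal{M}$ on variable Lebesgue spaces whose exponent is log-H\"{o}lder and bounded below by a constant greater than $1$ (see, e.g., \cite[Lemma 2.5]{zsy16}), $\mathcal{M}$ is bounded on the $\frac{1}{(q_0/s_0)^{'}}$-convexification of $(X^{1/s_0})'(\mathbb{X})$, so Assumption \ref{as1.2} holds with $q_0=2$. With \eqref{e1.6}, \eqref{e1.7}, \eqref{e1.8}, $\alpha>n(\frac{1}{\min\{1,\tilde{p}^-\}}-\frac{1}{2})\ge\frac{n}{2}$, and the above, all hypotheses of Theorem \ref{co1.4} are met for $X(\mathbb{X}):=L^{p(\cdot)}(\mathbb{X})$; Theorem \ref{co1.4} then gives $\|L^{i\tau}f\|_{H^{p(\cdot)}_L(\mathbb{X})}\le C(1+|\tau|)^{n(\frac{1}{s_0}-\frac{1}{2})}\|f\|_{H^{p(\cdot)}_L(\mathbb{X})}$, and since $1+|\tau|\ge 1$ and $\frac{1}{s_0}-\frac{1}{2}\le\frac{1}{\min\{1,\tilde{p}^-\}}-\frac{r}{2}$ by our choice of $s_0$, the desired estimate follows.

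The step I expect to require the most care is the associate-space identification: recognizing the iterated space $[((L^{p(\cdot)})^{1/s_0})']^{1/(2/s_0)'}$ as a variable Lebesgue space and checking that $\tilde{p}^+<2$ is exactly the condition forcing its exponent to have essential infimum strictly above $1$, which is what makes $\mathcal{M}$ bounded there. The remaining points — the BQBF property, Assumption \ref{as1.1} via log-H\"{o}lder continuity, and the reduction to Theorem \ref{co1.4} — are routine given the cited facts about variable Lebesgue spaces. One should also double-check that the stated range of $r$ always leaves enough slack to choose an admissible $s_0$ with $\frac{1}{s_0}-\frac{1}{2}\le\frac{1}{\min\{1,\tilde{p}^-\}}-\frac{r}{2}$ without violating the constraints needed for Assumptions \ref{as1.1} and \ref{as1.2}.
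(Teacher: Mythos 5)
Your proof is correct and follows essentially the same route the paper intends: the paper declares the proof of Theorem~\ref{thvs3x} to be "similar to that of Theorem~\ref{thvs1}," which means verifying that $L^{p(\cdot)}(\mathbb{X})$ satisfies Assumptions~\ref{as1.1} and~\ref{as1.2} via \cite[Theorem 2.7, Lemmas 2.5, 2.9]{zsy16} and then specializing the abstract result (here Theorem~\ref{co1.4}) to $X(\mathbb{X}):=L^{p(\cdot)}(\mathbb{X})$, exactly as you do. The only point worth noting is that Theorem~\ref{co1.4} outputs the power $n(\frac{1}{s_0}-\frac{1}{2})$ with no $r$-dependence, so matching the stated conclusion $n(\frac{1}{\min\{1,\tilde p^-\}}-\frac{r}{2})$ by selecting $s_0$ requires slack that disappears exactly at $r=1$ when $\tilde p^-<1$ (forcing $s_0=\tilde p^-$); you flag the $\tilde p^-\ge 1$ case but not this endpoint — though the paper's own (omitted) argument is equally silent on it, so this is a shared, minor loose end rather than a defect of your proposal.
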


The proofs of Theorem \ref{thvs2}, \ref{thvs2x}, \ref{thvs3} and \ref{thvs3x} are similar to that of Theorem \ref{thvs1}, we omit the details here.
\begin{remark}
 We should point out that, when it comes to the Euclidean spaces setting $(\mathbb{X}:=\rn)$, Theorems \ref{thvs2}, \ref{thvs2x}, \ref{thvs3} and \ref{thvs3x} are also completely 
new.
\end{remark}

\subsection{Weighted Lebesgue Spaces}
\hskip\parindent
In this subsection, we recall the concept of weights on $\mathbb{X}$ (see e.g.
\cite[Chapter I]{s89}). Let $\mathfrak{B}$ denote the set of all balls of $\mathbb{X}$. A locally integrable
function $w:\mathbb{X}\rightarrow[0,\fz)$ is called an $\mathbf{A}_p(\mathbb{X})$-weight with
$p\in[1,\fz)$ if
$$[w]_{\mathbf{A}_p(\mathbb{X})}:=\sup_{B\in\mathfrak{B}}	
\mu(B)^{-p}\|w\|_{L^1(B)}
\lf\|w^{-1}\r\|_{L^{\frac{1}{p-1}}(B)}<\fz,$$
where $1/(p-1):=\fz$ when $p=1$. Let
$$\mathbf{A}_{\fz}(\mathbb{X}):=\bigcup_{p\in[1,\fz)}\mathbf{A}_p(\mathbb{X}).$$
For any $\omega\in\mathbf{A}_{\fz}(\mathbb{X})$, the critical index $q_{w}$ of $w$ is defined by setting
\begin{align}\label{qw}
q_{w}:=\inf\lf\{p\in[1,\fz): w\in\mathbf{A}_p(\mathbb{X})\r\}.
\end{align}
Define the space:
\begin{align}\label{www}
\|f\|_{L^p_w(\mathbb{X})}:=\lf\{f\in\mathfrak{U}(\mathbb{X})
:\|f\|_{L^p_w(\mathbb{X})}:\lf(\int_{\mathbb{X}}
|f(x)|^pw(x)\,d\mu(x)\r)^{1/p}<\fz\r\}.
\end{align}
Let $p\in(0,\fz)$ and $w\in\mathbf{A}_{\fz}(\mathbb{X})$. The weighted Lebesgue space
$L_{w}^p(\mathbb{X})$ is defined as
in \eqref{www}. Observe that $L_{w}^p(\mathbb{X})$ is a BQBF space (see e.g. \cite[Section 7.1]{shyy17}),
but not necessarily a quasi-Banach function space (see e.g. \cite[Remark
5.22(ii)]{wyyz21} on the Euclidean space case). If $X(\mathbb{X})=L_{w}^p(\mathbb{X})$, then
$WX(\mathbb{X})$ as in Definition \ref{de2.1} becomes the weighted weak Lebesgue space
$WL_{w}^p(\mathbb{X})$.
Furthermore, we denote
$WH_{L,X}(\mathbb{X})$, $WH^M_{X,L,\mathrm{atom}}(\mathbb{X})$, and
$WH^{M,\varepsilon}_{X,L,\mathrm{mol}}(\mathbb{X})$ by $WH^{p}_{w,L}(\mathbb{X})$, $WH^{p,M}_{w,L,\mathrm{atom}}(\mathbb{X})$, and
$WH^{p,M,\varepsilon}_{w,L,\mathrm{mol}}(\mathbb{X})$, respectively.
 The following
theorem is an application of the results obtained in the above sections to weighted
Lebesgue spaces.

\begin{theorem}\label{thws1}
Let $L$ be a non-negative self-adjoint operator on $L^2(\mathbb{X})$
satisfying the Davies-Gaffney estimate \eqref{e1.4}, $w\in \mathbf{A}_{\fz}(\mathbb{X})$ and $\tilde{p}\in(0,2)$, $s_0\in(0,\min\{1,\tilde{p}/q_w\})$ and $w^{1-(\tilde{p}/s_0)^{'}}(\mathbb{X})
\in\mathbf{A}_{(\tilde{p}/s_0)^{'}/(2/s_0)^{'}}$ with $q_w$ as in \eqref{qw}.  Then the spaces
$WH^{\tilde{p}}_{w,L}(\mathbb{X})$, $WH^{\tilde{p},M}_{w,L,\mathrm{atom}}(\mathbb{X})$, and
$WH^{\tilde{p},M,\varepsilon}_{w,L,\mathrm{mol}}(\mathbb{X})$
coincide with equivalent quasi-norms.
\end{theorem}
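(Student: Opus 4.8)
The plan is to obtain Theorem~\ref{thws1} as a direct specialization of Theorem~\ref{th1.1} to $X(\mathbb{X}):=L^{\tilde{p}}_w(\mathbb{X})$, in the same way Theorems~\ref{thos1} and~\ref{thvs1} were deduced for Orlicz and variable Lebesgue spaces. Since $L^{\tilde{p}}_w(\mathbb{X})$ is, as recalled above, a BQBF space, the whole task is to exhibit admissible parameters and to check that this space satisfies both Assumption~\ref{as1.1} and Assumption~\ref{as1.2}; Theorem~\ref{th1.1} then yields the coincidence of the three spaces with equivalent quasi-norms. Concretely I would take $p:=\tilde{p}/q_w$, $q_0:=2$, and $s_0\in(0,\min\{1,\tilde{p}/q_w\})$ as in the hypothesis, together with $M\in(\frac{n}{2}[\frac{1}{s_0}-\frac{1}{2}],\infty)\cap\nn$ and $\varepsilon\in(\frac{n}{s_0},\infty)$ as required by Theorem~\ref{th1.1}; with these choices $s_0\in(0,\min\{p,1\}]$ and $q_0\in(s_0,2]$ hold automatically (indeed $s_0<1<2$).

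To verify Assumption~\ref{as1.1}, fix $t\in(0,p)$. Then $X^{1/t}(\mathbb{X})=L^{\tilde{p}/t}_w(\mathbb{X})$ and $\tilde{p}/t>\tilde{p}/p=q_w$, so $w\in\mathbf{A}_{\tilde{p}/t}(\mathbb{X})$ by the definition~\eqref{qw} of the critical index. The required vector-valued Fefferman--Stein maximal inequality for $\mathcal{M}$ on $L^{\tilde{p}/t}_w(\mathbb{X})$, valid for every $u\in(1,\infty)$, is then the standard weighted vector-valued maximal estimate on spaces of homogeneous type (which follows from the $\mathbf{A}_p$-theory of \cite{s89} together with a Rubio de Francia extrapolation argument; cf.\ \cite{gly09} for the unweighted version). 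This gives Assumption~\ref{as1.1} with the above $p$.

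To verify Assumption~\ref{as1.2}, note first that $\tilde{p}/s_0>q_w\ge1$, so in particular $\tilde{p}/s_0>1$, hence $w\in\mathbf{A}_{\tilde{p}/s_0}(\mathbb{X})$ and $X^{1/s_0}(\mathbb{X})=L^{\tilde{p}/s_0}_w(\mathbb{X})$ is a BBF space, property~(v) following from a H\"{o}lder estimate using the local integrability of $w^{1-(\tilde{p}/s_0)'}$. A routine computation of the K\"{o}the dual, combined with the identity $(L^{q}_v)^{\theta}=L^{\theta q}_v$ for convexifications, gives
\[
\left[\left(\left[L^{\tilde{p}}_w(\mathbb{X})\right]^{1/s_0}\right)'\right]^{1/(2/s_0)'}
=L^{(\tilde{p}/s_0)'/(2/s_0)'}_{w^{1-(\tilde{p}/s_0)'}}(\mathbb{X}),
\]
whose exponent satisfies $(\tilde{p}/s_0)'/(2/s_0)'>1$ precisely because $\tilde{p}<2$ (equivalently $\tilde{p}/s_0<2/s_0$). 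Therefore $\mathcal{M}$ is bounded on this $\frac{1}{(2/s_0)'}$-convexification of the associate space if and only if $w^{1-(\tilde{p}/s_0)'}\in\mathbf{A}_{(\tilde{p}/s_0)'/(2/s_0)'}(\mathbb{X})$, which is exactly the standing hypothesis of the theorem; this establishes Assumption~\ref{as1.2} with the above $s_0$ and $q_0=2$. Invoking Theorem~\ref{th1.1} then completes the proof.

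I do not expect a genuine obstacle: the argument is bookkeeping resting on the already-proved Theorem~\ref{th1.1} and classical weighted theory on $\mathbb{X}$. The one point needing care is the correct identification of the iterated associate-space/convexification $[([L^{\tilde{p}}_w]^{1/s_0})']^{1/(2/s_0)'}$ and of the dual weight $w^{1-(\tilde{p}/s_0)'}$, together with the translation of $\tilde{p}<2$ into the exponent inequality $(\tilde{p}/s_0)'/(2/s_0)'>1$ --- this is exactly where the weight hypothesis of the theorem is used.
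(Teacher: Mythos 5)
Your proposal is correct and follows essentially the same route as the paper: verify Assumptions~\ref{as1.1} and~\ref{as1.2} for $X(\mathbb{X}) = L^{\tilde{p}}_w(\mathbb{X})$, compute the iterated associate space/convexification to identify it with $L^{(\tilde{p}/s_0)'/(2/s_0)'}_{w^{1-(\tilde{p}/s_0)'}}(\mathbb{X})$, and invoke Theorem~\ref{th1.1}. One small point where you are actually more careful than the paper: you set $p := \tilde{p}/q_w$, so that for every $t \in (0,p)$ one has $\tilde{p}/t > q_w$ and hence $w \in \mathbf{A}_{\tilde{p}/t}(\mathbb{X})$, whereas the paper takes $p := \tilde{p}$ and appeals to \cite[Theorem 6.5(ii)]{fmy20}; with the naive choice $p = \tilde{p}$ the weighted Fefferman--Stein inequality on $L^{\tilde{p}/t}_w(\mathbb{X})$ would be problematic for $t$ close to $\tilde{p}$ when $q_w > 1$, so your normalization by $q_w$ is the right one (and still yields $s_0 \in (0,\min\{p,1\}]$ from the standing hypothesis $s_0 < \min\{1, \tilde{p}/q_w\}$). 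Everything else --- the BBF property of $X^{1/s_0}$, the K\"othe dual computation $ (L^{q}_w)' = L^{q'}_{w^{1-q'}} $, the observation that $\tilde p<2$ forces $(\tilde p/s_0)'/(2/s_0)'>1$, and the translation of the $\mathbf{A}_p$ hypothesis on $w^{1-(\tilde p/s_0)'}$ into boundedness of $\mathcal M$ --- matches the paper's argument.
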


\begin{proof}[Proof of Theorem \ref{thws1}]
In order to prove this theorem, we only need to show that the weighted Lebesgue space $L_w^{\tilde{p}}(\mathbb{X})$ satisfies all the
assumptions needed, respectively, in Theorems \ref{th1.1}, \ref{th1.2} and \ref{th1.3}, which
then further implies the desired conclusions of the present theorem.

Take $p:=\tilde{p}$, $q_0:=2$, and $s_0\in(0,\,\min\{1,\tilde{p}/q_w\})$.
From \cite[Theorem 6.5(ii)]{fmy20}, we know that the BQBF space $X(\mathbb{X})=L^{p}_w(\mathbb{X})$ satisfies Assumption \ref{as1.1} for above exponents.
Moreover, we see that
$$\lf[\lf(\lf[L^{\tilde{p}}_w(\mathbb{X})\r]^{\frac{1}{s_0}}\r)^{'}\r]^{\frac{1}{(2/s_0)^{'}}}=
L_{w^{1-(\tilde{p}/s_0)^{'}}}^{\frac{(\tilde{p}/s_0)^{'}}{(2/s_0)^{'}}}(\mathbb{X}).$$
By this and the assumption that $w^{1-(\tilde{p}/s_0)^{'}}(\mathbb{X})
\in\mathbf{A}_{(\tilde{p}/s_0)^{'}/(2/s_0)^{'}}(\mathbb{X})$ with $q_w$ as in \eqref{qw}, we obtain that the BQBF space $X(\mathbb{X})=L_w^{\tilde{p}}(\mathbb{X})$ satisfies Assumption \ref{as1.2} for above exponents. Therefore, the space $L_w^{\tilde{p}}(\mathbb{X})$ satisfies all the assumption of Theorem \ref{th1.1}.  This completes the proof of Theorem \ref{thvs1}.
\end{proof}

\begin{theorem}\label{thws2}
Let $L$ be a non-negative self-adjoint operator on $L^2(\mathbb{X})$
satisfying the Davies-Gaffney estimate \eqref{e1.4}, $w\in \mathbf{A}_{\fz}(\mathbb{X})$ and $\tilde{p}\in(0,2)$, $s_0\in(0,\min\{1,\tilde{p}/q_w\})$ and $w^{1-(\tilde{p}/s_0)^{'}}(\mathbb{X})
\in\mathbf{A}_{(\tilde{p}/s_0)^{'}/(2/s_0)^{'}}$ with $q_w$ as in \eqref{qw}. Assume that  $0<\gamma\neq1$,
$\beta\in[\gamma n(\frac{1}{s_0}-\frac{1}{2}),\infty)$, and $r\in(0,1]$. Then there exists a positive
constant $C$ such that, for any $\tau\in \mathbb{R}$ and $f\in WH^p_{w,L}(\mathbb{X})$,
\begin{align*}
\lf\|(I+L)^{-\beta/2}e^{i\tau L^{\gamma/2}}f\r\|_{WH^p_{w,L}(\mathbb{X})}\leq C\lf(1+|\tau|\r)^{n(\frac{1}{s_0}-\frac{r}{2})}\|f\|_{WH^p_{w,L}(\mathbb{X})}.
\end{align*}
\end{theorem}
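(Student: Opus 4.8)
The plan is to deduce Theorem \ref{thws2} directly from Theorem \ref{th1.2} by checking that the weighted Lebesgue space $X(\mathbb{X}):=L^{\tilde{p}}_w(\mathbb{X})$ satisfies all the structural hypotheses imposed there, i.e., that it is a BQBF space fulfilling both Assumption \ref{as1.1} and Assumption \ref{as1.2} for the exponents $p:=\tilde{p}$, $q_0:=2$, and the prescribed $s_0\in(0,\min\{1,\tilde{p}/q_w\})$. Note that $s_0<\min\{1,\tilde{p}/q_w\}\leq\min\{1,\tilde{p}\}$, so the triple $(\tilde{p},s_0,2)$ is admissible for Theorem \ref{th1.2} (that is, $s_0\in(0,\min\{\tilde{p},1\}]$ and $q_0:=2\in(s_0,2]$). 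Once the two assumptions are verified, since $\beta\in[\gamma n(\frac1{s_0}-\frac12),\infty)$ and $r\in(0,1]$, an application of Theorem \ref{th1.2} with this choice of $X$ gives at once
\begin{align*}
\lf\|(I+L)^{-\beta/2}e^{i\tau L^{\gamma/2}}f\r\|_{WH^p_{w,L}(\mathbb{X})}\leq C\lf(1+|\tau|\r)^{n(\frac1{s_0}-\frac r2)}\|f\|_{WH^p_{w,L}(\mathbb{X})}.
\end{align*}

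First I would record, exactly as in the proof of Theorem \ref{thws1}, that $L^{\tilde{p}}_w(\mathbb{X})$ is a BQBF space (see \cite[Section 7.1]{shyy17}) and that, by \cite[Theorem 6.5(ii)]{fmy20}, it satisfies Assumption \ref{as1.1} with $p:=\tilde{p}$; this uses $\tilde{p}\in(0,2)$ and $w\in\mathbf{A}_\infty(\mathbb{X})$. For Assumption \ref{as1.2}, I would first observe that $s_0<\tilde{p}/q_w$ forces $\tilde{p}/s_0>q_w$, whence $w\in\mathbf{A}_{\tilde{p}/s_0}(\mathbb{X})$ and $X^{1/s_0}(\mathbb{X})=L^{\tilde{p}/s_0}_w(\mathbb{X})$ is a BBF space. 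Then, applying the weighted duality $(L^q_v)'=L^{q'}_{v^{1-q'}}$ and the convexification identity $[L^q_v]^\theta=L^{\theta q}_v$, I would compute the relevant iterated space:
$$\lf[\lf(\lf[L^{\tilde{p}}_w(\mathbb{X})\r]^{1/s_0}\r)'\r]^{1/(2/s_0)'}=L^{(\tilde{p}/s_0)'/(2/s_0)'}_{w^{1-(\tilde{p}/s_0)'}}(\mathbb{X}).$$
The hypothesis $w^{1-(\tilde{p}/s_0)'}\in\mathbf{A}_{(\tilde{p}/s_0)'/(2/s_0)'}(\mathbb{X})$ then guarantees that $\mathcal{M}$ is bounded on this space, which is precisely Assumption \ref{as1.2} with $q_0:=2$. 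With both assumptions established, Theorem \ref{th1.2} applies verbatim and concludes the proof.

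The only delicate point — hence the main obstacle — is the correct bookkeeping in the identification of the iterated associate/convexification space above: the duality formula must be applied at the level of the $s_0$-convexification of $L^{\tilde{p}}_w$ and the Muckenhoupt exponent $(\tilde{p}/s_0)'/(2/s_0)'$ must be tracked carefully, since this is exactly the weight condition appearing in the hypothesis. This computation, however, is already carried out in the proof of Theorem \ref{thws1}, so no genuinely new difficulty arises here; all the analytic content (the atomic decomposition, the functional calculus estimates \eqref{e4.6} and \eqref{e4.7}, and the summation arguments) is inherited from Theorem \ref{th1.2}.
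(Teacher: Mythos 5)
Your proposal is correct and matches the paper's intended argument: the paper explicitly defers to the proof of Theorem \ref{thws1} (verifying Assumptions \ref{as1.1} and \ref{as1.2} for $L^{\tilde{p}}_w(\mathbb{X})$ via \cite[Theorem 6.5(ii)]{fmy20} and the weighted duality/convexification identity, then invoking Theorem \ref{th1.2}), and this is exactly what you do. Your additional checks that $s_0\in(0,\min\{\tilde{p},1\}]$, that $q_0:=2\in(s_0,2]$, and that $(\tilde{p}/s_0)'/(2/s_0)'>1$ are minor but welcome bookkeeping clarifications beyond what the paper writes out.
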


\begin{theorem}\label{thws2x}
Let $L$ be a non-negative self-adjoint operator on $L^2(\mathbb{X})$
satisfying the Davies-Gaffney estimate \eqref{e1.4}, $w\in \mathbf{A}_{\fz}(\mathbb{X})$ and $\tilde{p}\in(0,2)$, $s_0\in(0,\min\{1,\tilde{p}/q_w\})$ and $w^{1-(\tilde{p}/s_0)^{'}}(\mathbb{X})
\in\mathbf{A}_{(\tilde{p}/s_0)^{'}/(2/s_0)^{'}}$ with $q_w$ as in \eqref{qw}. Assume that  $0<\gamma\neq1$,
$\beta\in[\gamma n(\frac{1}{s_0}-\frac{1}{2}),\infty)$, and $r\in(0,1]$. Then there exists a positive
constant $C$ such that, for any $\tau\in \mathbb{R}$ and $f\in H^p_{w,L}(\mathbb{X})$,
\begin{align*}
\lf\|(I+L)^{-\beta/2}e^{i\tau L^{\gamma/2}}f\r\|_{H^p_{w,L}(\mathbb{X})}\leq C\lf(1+|\tau|\r)^{n(\frac{1}{s_0}-\frac{r}{2})}\|f\|_{H^p_{w,L}(\mathbb{X})}.
\end{align*}
\end{theorem}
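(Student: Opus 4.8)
The plan is to deduce Theorem~\ref{thws2x} from the general estimate in Theorem~\ref{co1.1} by checking that, for the stated parameters, the weighted Lebesgue space $X(\mathbb{X}):=L^{\tilde p}_w(\mathbb{X})$ is a BQBF space satisfying both Assumption~\ref{as1.1} and Assumption~\ref{as1.2}; the verification is entirely parallel to the proof of Theorem~\ref{thws1}. First I would set $p:=\tilde p$, $q_0:=2$, and choose $s_0\in(0,\min\{1,\tilde p/q_w\})$ as in the hypotheses. Since $q_w\geq1$, one has $s_0<\tilde p/q_w\leq\tilde p=p$ together with $s_0<1$, so $s_0\in(0,\min\{p,1\}]$ and $q_0=2\in(s_0,2]$, which is exactly the parameter range required by Theorem~\ref{co1.1}. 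Recall also that $L^{\tilde p}_w(\mathbb{X})$ is a BQBF space (see, for instance, \cite[Section 7.1]{shyy17}).

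Next I would verify Assumption~\ref{as1.1}: by \cite[Theorem 6.5(ii)]{fmy20}, for $p=\tilde p$ the required vector-valued Fefferman--Stein inequality on $(L^{\tilde p}_w)^{1/t}$ holds for all $t\in(0,p)$ and $u\in(1,\infty)$, i.e. Assumption~\ref{as1.1} is satisfied. For Assumption~\ref{as1.2} I would compute the iterated associate space and convexification
$$\lf[\lf(\lf[L^{\tilde p}_w(\mathbb{X})\r]^{1/s_0}\r)'\r]^{1/(2/s_0)'}=L^{(\tilde p/s_0)'/(2/s_0)'}_{w^{1-(\tilde p/s_0)'}}(\mathbb{X})$$
via the standard duality for weighted Lebesgue spaces; the hypothesis $w^{1-(\tilde p/s_0)'}\in\mathbf{A}_{(\tilde p/s_0)'/(2/s_0)'}(\mathbb{X})$ then forces, by the classical boundedness of $\mathcal{M}$ on weighted Lebesgue spaces over spaces of homogeneous type, that $\mathcal{M}$ is bounded on the $1/(2/s_0)'$-convexification of $([L^{\tilde p}_w]^{1/s_0})'$, while $[L^{\tilde p}_w]^{1/s_0}$ is plainly a BBF space. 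This is precisely Assumption~\ref{as1.2} with the above $s_0$ and $q_0=2$.

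Finally, with $0<\gamma\neq1$ and $\beta\in[\gamma n(\frac1{s_0}-\frac12),\infty)$, Theorem~\ref{co1.1} applied to $X(\mathbb{X})=L^{\tilde p}_w(\mathbb{X})$ yields
$$\lf\|(I+L)^{-\beta/2}e^{i\tau L^{\gamma/2}}f\r\|_{H^p_{w,L}(\mathbb{X})}\leq C(1+|\tau|)^{n(\frac1{s_0}-\frac12)}\|f\|_{H^p_{w,L}(\mathbb{X})}$$
for all $\tau\in\mathbb{R}$ and $f\in H^p_{w,L}(\mathbb{X})$; since $r\in(0,1]$ gives $(1+|\tau|)^{n(\frac1{s_0}-\frac12)}\leq(1+|\tau|)^{n(\frac1{s_0}-\frac r2)}$, the claimed bound follows at once. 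The only step that genuinely needs care---hence the main obstacle---is the identification of the iterated associate/convexification space and the resulting translation of the Muckenhoupt condition on $w^{1-(\tilde p/s_0)'}$ into Assumption~\ref{as1.2}; all remaining steps are direct invocations of earlier results.
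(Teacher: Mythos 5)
Your proposal is correct and follows essentially the same route the paper sketches: verify that $L^{\tilde p}_w(\mathbb{X})$ satisfies Assumptions \ref{as1.1} and \ref{as1.2} exactly as in the proof of Theorem \ref{thws1}, then invoke Theorem \ref{co1.1} and absorb the exponent via $r\le 1$. The paper itself dismisses the proof as ``similar to that of Theorem \ref{thws1}'' without recording the details, and your write-up fills in precisely those omitted steps.
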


\begin{theorem}\label{thws3}
Let $L$ be a non-negative self-adjoint operator on $L^2(\mathbb{X})$
satisfying the Davies-Gaffney estimate \eqref{e1.4}, $w\in \mathbf{A}_{\fz}(\mathbb{X})$ and $\tilde{p}\in(0,2)$, $s_0\in(0,\min\{1,\tilde{p}/q_w\})$ and $w^{1-(\tilde{p}/s_0)^{'}}(\mathbb{X})
\in\mathbf{A}_{(\tilde{p}/s_0)^{'}/(2/s_0)^{'}}(\mathbb{X})$ with $q_w$ as in \eqref{qw}.
Assume that  $\alpha>n(\frac{1}{s_0}-\frac{1}{2})$ and $r\in(\frac{n/s_0}{\alpha+n/2},1]$. Then there exists a positive constant $C$ such that, for any $\tau\in \mathbb{R}$ and $f\in WH^p_{w,L}(\mathbb{X})$,
\begin{align*}
\lf\|L^{i\tau}f\r\|_{WH^p_{w,L}(\mathbb{X})}\leq C\lf(1+|\tau|\r)^{n(\frac{1}{s_0}-\frac{r}{2})}\|f\|_{WH^p_{w,L}(\mathbb{X})}.
\end{align*}
\end{theorem}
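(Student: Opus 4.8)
The plan is to obtain Theorem~\ref{thws3} as a direct specialization of Theorem~\ref{th1.3}: it suffices to check that, under the stated hypotheses on $w$ and $\tilde p$, the weighted Lebesgue space $L^{\tilde p}_w(\mathbb{X})$ is a $\mathrm{BQBF}$ space satisfying Assumptions~\ref{as1.1} and~\ref{as1.2} with the choices $p:=\tilde p$, $q_0:=2$ and the given $s_0\in(0,\min\{1,\tilde p/q_w\})$. Once this is established, applying Theorem~\ref{th1.3} with $X(\mathbb{X}):=L^{\tilde p}_w(\mathbb{X})$ (the structural hypotheses on $(\mathbb{X},d,\mu)$ and $L$ underlying that theorem, namely the Ahlfors $n$-regularity~\eqref{e1.6}, the Gaussian upper bound~\eqref{e1.7} and the weighted bound~\eqref{e1.8}, being assumed) yields \eqref{e1.9} with $WH_{X,L}(\mathbb{X})=WH^{\tilde p}_{w,L}(\mathbb{X})$, which is exactly the claimed inequality. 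That $L^{\tilde p}_w(\mathbb{X})$ is a $\mathrm{BQBF}$ space is recorded in \cite[Section~7.1]{shyy17}.

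First I would verify Assumption~\ref{as1.1}. Taking $p:=\tilde p$, \cite[Theorem~6.5(ii)]{fmy20} shows that, since $w\in\mathbf{A}_\infty(\mathbb{X})$, the Fefferman--Stein vector-valued maximal inequality holds on $[L^{\tilde p}_w(\mathbb{X})]^{1/t}$ for every $t\in(0,\tilde p)$ and every $u\in(1,\infty)$, which is precisely Assumption~\ref{as1.1}. Next I would check Assumption~\ref{as1.2}. Since $q_w\ge1$ we have $s_0<\tilde p/q_w\le\tilde p<2$, so $q_0:=2\in(s_0,2]$ is admissible and $\tilde p/s_0>1$, whence $[L^{\tilde p}_w(\mathbb{X})]^{1/s_0}=L^{\tilde p/s_0}_w(\mathbb{X})$ is (up to an equivalent quasi-norm) a $\mathrm{BBF}$ space. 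Using the standard duality for weighted Lebesgue spaces together with the definition of the convexification, one identifies
$$\lf[\lf(\lf[L^{\tilde p}_w(\mathbb{X})\r]^{\frac1{s_0}}\r)^{'}\r]^{\frac1{(2/s_0)^{'}}}
=L^{\frac{(\tilde p/s_0)^{'}}{(2/s_0)^{'}}}_{w^{1-(\tilde p/s_0)^{'}}}(\mathbb{X}),$$
so that $\mathcal{M}$ is bounded on this space if and only if $w^{1-(\tilde p/s_0)^{'}}\in\mathbf{A}_{(\tilde p/s_0)^{'}/(2/s_0)^{'}}(\mathbb{X})$, which is exactly the hypothesis imposed on $w$. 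Thus Assumption~\ref{as1.2} holds.

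The only mildly delicate step is this last identification of the iterated associate-and-convexification space with a concrete weighted Lebesgue space and the bookkeeping that matches the resulting $\mathbf{A}_p$-index with the hypothesis on $w$; everything else is a routine verification. I would then conclude by invoking Theorem~\ref{th1.3}, noting that the exponents $\alpha$ and $r$ and the constraints $\alpha>n(\frac1{s_0}-\frac12)$, $r\in(\frac{n/s_0}{\alpha+n/2},1]$ carry over unchanged, so that the power $n(\frac1{s_0}-\frac r2)$ on $(1+|\tau|)$ is inherited directly and no further adjustment is required.
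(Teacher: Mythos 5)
Your proof is correct and follows the same route as the paper: the paper remarks that Theorem~\ref{thws3} is proved like Theorem~\ref{thws1} (verify Assumptions~\ref{as1.1} and~\ref{as1.2} for $L^{\tilde p}_w(\mathbb{X})$ via \cite[Theorem~6.5(ii)]{fmy20} and the convexification/associate-space identification, then invoke the relevant abstract theorem — here Theorem~\ref{th1.3}). You also correctly noticed, and silently corrected, that the stated hypotheses on $L$ and on $(\mathbb{X},d,\mu)$ in Theorem~\ref{thws3} are a copy-paste artifact from the Davies--Gaffney setting and should read ``Ahlfors $n$-regular with the Gaussian upper estimate \eqref{e1.7} and \eqref{e1.8}'', as in the parallel Theorems~\ref{thos3} and~\ref{thvs3}.
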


\begin{theorem}\label{thws3x}
Let $L$ be a non-negative self-adjoint operator on $L^2(\mathbb{X})$
satisfying the Davies-Gaffney estimate \eqref{e1.4}, $w\in \mathbf{A}_{\fz}(\mathbb{X})$ and $\tilde{p}\in(0,2)$, $s_0\in(0,\min\{1,\tilde{p}/q_w\})$ and $w^{1-(\tilde{p}/s_0)^{'}}(\mathbb{X})
\in\mathbf{A}_{(\tilde{p}/s_0)^{'}/(2/s_0)^{'}}(\mathbb{X})$ with $q_w$ as in \eqref{qw}.
Assume that  $\alpha>n(\frac{1}{s_0}-\frac{1}{2})$ and $r\in(\frac{n/s_0}{\alpha+n/2},1]$. Then there exists a positive constant $C$ such that, for any $\tau\in \mathbb{R}$ and $f\in H^p_{w,L}(\mathbb{X})$,
\begin{align*}
\lf\|L^{i\tau}f\r\|_{H^p_{w,L}(\mathbb{X})}\leq C\lf(1+|\tau|\r)^{n(\frac{1}{s_0}-\frac{r}{2})}\|f\|_{H^p_{w,L}(\mathbb{X})}.
\end{align*}
\end{theorem}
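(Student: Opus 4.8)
The plan is to deduce Theorem \ref{thws3x} from Theorem \ref{co1.4} applied with $X(\mathbb{X}):=L^{\tilde p}_w(\mathbb{X})$, exactly in the spirit of the proofs of Theorem \ref{thws1} and Corollary \ref{co1.5}. Since the analytic core is already contained in Theorem \ref{th1.3}/Theorem \ref{co1.4} --- whose argument rests on the atomic characterization of Theorem \ref{th1.1}, Lemma \ref{le5.1}, and the kernel estimates \eqref{e5.6}--\eqref{e5.7} --- the task reduces to checking that the weighted Lebesgue space $L^{\tilde p}_w(\mathbb{X})$ is a ${\rm{BQBF}}$ space satisfying both Assumptions \ref{as1.1} and \ref{as1.2} for a suitable triple $(p,s_0,q_0)$.

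First I would set $p:=\tilde p$, $q_0:=2$, and pick $s_0\in(0,\min\{1,\tilde p/q_w\})$ as in the hypothesis. By \cite[Theorem 6.5(ii)]{fmy20}, $X(\mathbb{X})=L^{\tilde p}_w(\mathbb{X})$ then satisfies Assumption \ref{as1.1} for this $p$. Next I would identify the relevant iterated associate space,
$$\lf[\lf(\lf[L^{\tilde p}_w(\mathbb{X})\r]^{\frac1{s_0}}\r)'\r]^{\frac1{(2/s_0)'}}
=L^{\frac{(\tilde p/s_0)'}{(2/s_0)'}}_{w^{1-(\tilde p/s_0)'}}(\mathbb{X}),$$
and then use the hypothesis $w^{1-(\tilde p/s_0)'}\in\mathbf{A}_{(\tilde p/s_0)'/(2/s_0)'}(\mathbb{X})$ together with the classical boundedness of $\mathcal{M}$ on $\mathbf{A}_\bullet$-weighted Lebesgue spaces to conclude that $\mathcal{M}$ is bounded on this $\frac1{(q_0/s_0)'}$-convexification of $(X^{1/s_0})'(\mathbb{X})$; this gives Assumption \ref{as1.2}. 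Because $s_0\le1$, we have $n(\frac1{s_0}-\frac12)\ge\frac n2$, so the hypothesis $\alpha>n(\frac1{s_0}-\frac12)$ forces $\alpha>\frac n2$; hence, together with the standing assumptions on $(\mathbb{X},d,\mu)$ and $L$ (Ahlfors $n$-regularity \eqref{e1.6}, the Gaussian upper bound \eqref{e1.7}, and \eqref{e1.8}), all the hypotheses of Theorem \ref{co1.4} are met with $X(\mathbb{X}):=L^{\tilde p}_w(\mathbb{X})$.

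Applying Theorem \ref{co1.4} then yields
$$\lf\|L^{i\tau}f\r\|_{H^p_{w,L}(\mathbb{X})}\leq C\lf(1+|\tau|\r)^{n(\frac1{s_0}-\frac12)}\|f\|_{H^p_{w,L}(\mathbb{X})}$$
for all $\tau\in\mathbb{R}$, and since $r\in(0,1]$ implies $(1+|\tau|)^{n(1/s_0-1/2)}\le(1+|\tau|)^{n(1/s_0-r/2)}$, the asserted estimate follows at once. The only genuinely technical ingredients are the identification of the iterated associate space and the passage from the $\mathbf{A}_\bullet$-condition to the boundedness of $\mathcal{M}$; both are routine and already carried out verbatim in the proof of Theorem \ref{thws1}, so I anticipate no substantive obstacle here --- the real work lies entirely upstream in Theorems \ref{th1.1} and \ref{th1.3}.
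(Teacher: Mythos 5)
Your proposal is correct and follows exactly the route the paper intends: verify Assumptions \ref{as1.1} and \ref{as1.2} for $X(\mathbb{X}):=L^{\tilde p}_w(\mathbb{X})$ with $p:=\tilde p$, $q_0:=2$, $s_0\in(0,\min\{1,\tilde p/q_w\})$ (precisely as in the proof of Theorem \ref{thws1}), observe that the remaining hypotheses of Theorem \ref{co1.4} hold, and apply it. Your final remark that Theorem \ref{co1.4} actually yields the sharper exponent $n(\frac1{s_0}-\frac12)$, which dominates $(1+|\tau|)^{n(\frac1{s_0}-\frac{r}{2})}$ is correct is a useful observation: it shows the parameter $r$ and its constraint play no essential role here and are simply carried over from the weak-space analogue Theorem \ref{thws3}.
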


The proofs of Theorems \ref{thws2}, \ref{thws2x}, \ref{thws3} and \ref{thws3x} are similar to that of Theorem \ref{thws1}, we omit the details here.

\begin{remark}
 To the best of our knowledge, when it comes to the Euclidean spaces setting $(\mathbb{X}:=\rn)$, Theorems \ref{thws2}, \ref{thws2x}, \ref{thws3} and \ref{thws3x} are also
new.
\end{remark}

\subsection{Mixed-Norm Lebesgue Spaces on $\mathbb{R}^n$}
\hskip\parindent
In this subsection, we apply ball quasi-Banach spaces to mixed-norm Lebesgue spaces when $\mathbb{X}:=\rn$. The mixed-norm Lebesgue space $L^{\vec{p}}(\rn)$ was studied by Benedek and Panzone \cite{bp61} in 1961,
which can be traced back to H\"{o}rmander \cite{h60}. Particularly, in order to meet the requirements arising in the study of the
boundedness of operators, {\bf PDE}s, and some other analysis fields, the real-variable theory of mixed-norm function spaces has rapidly been developed in recent years (see e.g. \cite{hy21}).
\begin{definition}
Let $\vec{p}:=(p_1,\ldots,p_n)\in(0,\fz]^n$. The mixed-norm Lebesgue space $L^{\vec{p}}(\rn)$
 is defined to be the set of all the measurable functions $f$ on $\rn$
such that
$$\|f\|_{L^{\vec{p}}(\rn)}:=\lf\{\int_{\rr}\cdots\lf[\int_{\rr}\lf|f(x_1,\ldots,x_n)\r|^{p_1}\,dx_1\r]
^{\frac{p_2}{p_1}}\cdots\,dx_n\r\}^{\frac{1}{p_n}}<\fz$$
with the usual modifications made when $p_i=\fz$ for some $i\in\{1, \ldots,n\}$. Moreover, let
\begin{align}\label{em.1}
 \tilde{p}^-:=\min\lf\{p_1,\ldots,p_n\r\}  \ \ \ \mathrm{and} \ \ \
 \tilde{p}^+:=\max\lf\{p_1,\ldots,p_n\r\}.
 \end{align}
\end{definition}
\begin{remark}
 Let $\vec{p}:=(p_1,\ldots,p_n)\in(0,\fz]^n$
and both $\tilde{p}^-$ and $\tilde{p}^+$ be the same as in \eqref{em.1}. By
Definition \ref{de2.1}, we can conclude that $L^{\vec{p}}(\rn)$ is a ball quasi-Banach space, but it is worth pointing
out that $L^{\vec{p}}(\rn)$ may not be a quasi-Banach function space (see e.g. \cite[Remark 7.21]{zyyw21}).
\end{remark}
If $X(\mathbb{X})=L^{\vec{p}}(\rn)$, then
$WX(\mathbb{X})$ as in Definition \ref{de2.1} becomes the weak mixed-norm Lebesgue space
$WL^{\vec{p}}(\rn)$.
Furthermore, we denote
$WH_{L,X}(\mathbb{X})$, $WH^M_{X,L,\mathrm{atom}}(\mathbb{X})$, and
$WH^{M,\varepsilon}_{X,L,\mathrm{mol}}(\mathbb{X})$ by $WH^{\vec{p}}_{L}(\rn)$, $WH^{\vec{p},M}_{L,\mathrm{atom}}(\rn)$, and
$WH^{\vec{p},M,\varepsilon}_{L,\mathrm{mol}}(\rn)$, respectively.
 The following
theorem is an application of the results obtained in the above sections to mixed-norm
Lebesgue spaces.
\begin{theorem}\label{thms1}
 Let $\vec{p}:=(p_1,\ldots,p_n)\in(0,\fz]^n$, $\tilde{p}^-$ and $\tilde{p}^+$ be the same as in \eqref{em.1},
 and $L$ is a non-negative self-adjoint operator on $L^2(\rn)$
satisfying the Davies-Gaffney estimate \eqref{e1.4}.
 Assume that $0<\tilde{p}^-\leq \tilde{p}^+<2$,
$M \in(\frac n2[\frac{1}{\min\{1,\tilde{p}^-}\}-\frac12],\fz)\cap\mathbb{N}$,
and $\varepsilon\in(\frac{n}{\min\{1,\tilde{p}^-\}},\fz)$. Then the spaces
$WH^{\vec{p}}_L(\rn)$, $WH^{\vec{p},M}_{L,\mathrm{atom}}(\rn)$, and
$WH^{\vec{p},M,\varepsilon}_{L,\mathrm{mol}}(\rn)$
coincide with equivalent quasi-norms.
\end{theorem}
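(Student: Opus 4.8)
The plan is to deduce Theorem~\ref{thms1} from Theorem~\ref{th1.1} by checking that the mixed-norm Lebesgue space $L^{\vec{p}}(\rn)$ is a ${\rm BQBF}$ space satisfying both Assumptions~\ref{as1.1} and~\ref{as1.2} for an appropriate triple $(p,s_0,q_0)$, exactly as was done for Orlicz spaces, variable Lebesgue spaces, and weighted Lebesgue spaces in Theorems~\ref{thos1}, \ref{thvs1}, and~\ref{thws1}. Concretely, I would take $p:=\tilde{p}^-$, $q_0:=2$, and choose $s_0\in(0,\min\{1,\tilde{p}^-\})$ close enough to $\min\{1,\tilde{p}^-\}$ that $M>\frac{n}{2}(\frac{1}{s_0}-\frac{1}{2})$ and $\varepsilon>\frac{n}{s_0}$; such an $s_0$ exists because, by hypothesis, $M>\frac{n}{2}[\frac{1}{\min\{1,\tilde{p}^-\}}-\frac{1}{2}]$ and $\varepsilon>\frac{n}{\min\{1,\tilde{p}^-\}}$, while $\frac{1}{s_0}$ depends continuously on $s_0$.

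To verify Assumption~\ref{as1.1}, note that for any $t\in(0,\infty)$ one has $[L^{\vec{p}}(\rn)]^{1/t}=L^{\vec{p}/t}(\rn)$ with $\vec{p}/t:=(p_1/t,\dots,p_n/t)$, and that for $t\in(0,\tilde{p}^-)$ every component exponent $p_i/t$ lies in $(1,\infty]$. The required vector-valued Fefferman--Stein maximal inequality on $L^{\vec{p}/t}(\rn)$, for any $u\in(1,\infty)$, is then the mixed-norm Fefferman--Stein inequality, which holds in precisely this range of exponents (see, for instance, \cite{hy21} and the references therein).

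To verify Assumption~\ref{as1.2}, observe that since $s_0<\tilde{p}^-$ every exponent $p_i/s_0$ exceeds $1$, so $[L^{\vec{p}}(\rn)]^{1/s_0}=L^{\vec{p}/s_0}(\rn)$ is a ball Banach function space, whose associate space is the mixed-norm space with componentwise conjugate exponents, $(L^{\vec{p}/s_0}(\rn))'=L^{(\vec{p}/s_0)'}(\rn)$, by the duality of mixed-norm Lebesgue spaces (see, for instance, \cite{bp61,hy21}). Taking the $\frac{1}{(q_0/s_0)'}$-convexification with $q_0=2$ yields
$$\lf[\lf(\lf[L^{\vec{p}}(\rn)\r]^{1/s_0}\r)'\r]^{1/(2/s_0)'}=L^{\vec{q}}(\rn),\qquad \vec{q}:=\frac{(\vec{p}/s_0)'}{(2/s_0)'},$$
where all operations on the exponent vector are componentwise. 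Since $\tilde{p}^+<2$, one checks componentwise that $q_i>1$ for each $i\in\{1,\dots,n\}$: indeed $q_i>1$ is equivalent to $(p_i/s_0)'>(2/s_0)'$, hence to $p_i<2$. Therefore the Hardy--Littlewood maximal operator $\mathcal{M}$ is bounded on $L^{\vec{q}}(\rn)$ by its boundedness on mixed-norm Lebesgue spaces all of whose exponents lie in $(1,\infty)$, which gives Assumption~\ref{as1.2}. Hence all hypotheses of Theorem~\ref{th1.1} hold with $X(\rn):=L^{\vec{p}}(\rn)$, and it yields $WH^{\vec{p}}_L(\rn)=WH^{\vec{p},M}_{L,\mathrm{atom}}(\rn)=WH^{\vec{p},M,\varepsilon}_{L,\mathrm{mol}}(\rn)$ with equivalent quasi-norms.

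I expect the only genuinely delicate point to be the bookkeeping in the mixed-norm setting: confirming that the associate space of $L^{\vec{p}/s_0}(\rn)$ is again mixed-norm with componentwise conjugate exponents, that convexification acts componentwise on the exponent vector, and that $\mathcal{M}$ is bounded on mixed-norm Lebesgue spaces throughout the relevant range. Once these facts are assembled, the remainder is a routine transcription of the proofs already given for the other concrete spaces in Section~\ref{s6}.
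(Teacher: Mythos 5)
Your proposal is correct and follows the same route as the paper: choose $p:=\tilde{p}^-$, $q_0:=2$, $s_0\in(0,\min\{1,\tilde{p}^-\})$ close enough to $\min\{1,\tilde{p}^-\}$, verify Assumptions~\ref{as1.1} and~\ref{as1.2} for $L^{\vec{p}}(\rn)$ via the mixed-norm Fefferman--Stein inequality and the componentwise duality/convexification identity, and then invoke Theorem~\ref{th1.1}. Your citations to the mixed-norm literature are in fact more apt than those in the paper's proof, which reuses the variable-exponent references, but the underlying argument is identical.
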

\begin{proof}
Take $p:=\tilde{p}^-$, $q_0:=2$, and $s_0\in(0,\,\min\{1,\tilde{p}^-\})$ such that $M \in(\frac n2[\frac{1}{\min\{1,\tilde{p}^-\}}-\frac12],\fz)\cap\mathbb{N}$ and $\varepsilon\in(\frac{n}{s_0},\fz)$.
From \cite[Theorem 2.7]{zsy16}, we know that the BQBF space $X(\mathbb{X})=L^{\vec{p}}(\rn)$ satisfies Assumption \ref{as1.1} for above exponents.
Moreover, by \cite[Lemma 2.9]{zsy16}, we obtain that
$$\lf[\lf(\lf[L^{\vec{p}}(\rn)\r]^{\frac{1}{s_0}}\r)^{'}\r]^{\frac{1}{(2/s_0)^{'}}}=
L^{\frac{(\vec{p}/s_0)^{'}}{(2/s_0)^{'}}}(\rn),$$
where, for any $i\in[1,n]\cap\nn$, $\frac{1}{(p_i/s_0)^{'}}+\frac{1}{p_i/s_0}=1$.
Therefore, by the assumption that $0<\tilde{p}^-\leq \tilde{p}^+<2$ and \cite[Lemma 2.5]{zsy16}, we obtain that the BQBF space $X(\mathbb{X})=L^{\vec{p}}(\rn)$ satisfies Assumption \ref{as1.2} for above exponents. Therefore, the space $L^{\vec{p}}(\mathbb{X})$ satisfies all the assumption of Theorem \ref{th1.1}, which implies the desired result. This completes the proof of Theorem \ref{thvs1}.
\end{proof}

\begin{theorem}\label{thms2}
Let $\vec{p}:=(p_1,\ldots,p_n)\in(0,\fz]^n$
and both $\tilde{p}^-$ and $\tilde{p}^+$ be the same as in \eqref{em.1}, and $L$ is a non-negative self-adjoint operator on $L^2(\rn)$
satisfying the Davies-Gaffney estimate \eqref{e1.4}.
Assume that $0<\tilde{p}^-\leq \tilde{p}^+<2$, $0<\gamma\neq1$,
$\beta\in[\gamma n(\frac{1}{\min\{1,\tilde{p}^-\}}-\frac{1}{2}),\infty)$, and $r\in(0,1]$. Then there exists a positive
constant $C$ such that, for any $\tau\in \mathbb{R}$ and $f\in WH^{\vec{p}}_{L}(\rn)$,
\begin{align*}
\lf\|(I+L)^{-\beta/2}e^{i\tau L^{\gamma/2}}f\r\|_{WH^{\vec{p}}_{L}(\rn)}\leq C\lf(1+|\tau|\r)^{^{n(\frac{1}{\min\{1,\tilde{p}^-\}}-\frac{r}{2})}}\|f\|_{WH^{\vec{p}}_{L}(\rn)}.
\end{align*}
\end{theorem}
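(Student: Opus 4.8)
The plan is to obtain Theorem \ref{thms2} as a direct application of the general Schr\"odinger group estimate Theorem \ref{th1.2}, following exactly the pattern already used for Theorem \ref{thms1}: all that is needed is to verify that the mixed-norm Lebesgue space $L^{\vec{p}}(\rn)$ satisfies the structural hypotheses of Theorem \ref{th1.2}. First I would fix the parameters by taking $p:=\tilde{p}^-$, $q_0:=2$, and $s_0:=\min\{1,\tilde{p}^-\}$; with this choice one has $s_0\in(0,\min\{p,1\}]$ and $q_0\in(s_0,2]$, and the assumed condition $\beta\in[\gamma n(\frac{1}{\min\{1,\tilde{p}^-\}}-\frac{1}{2}),\infty)$ is, since $\frac{1}{s_0}=\frac{1}{\min\{1,\tilde{p}^-\}}$, exactly the condition $\beta\in[\gamma n(\frac{1}{s_0}-\frac{1}{2}),\infty)$ required there.

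Next I would verify Assumptions \ref{as1.1} and \ref{as1.2} for $X(\mathbb{X}):=L^{\vec{p}}(\rn)$ with these exponents, just as in the proof of Theorem \ref{thms1}. For Assumption \ref{as1.1} I would appeal to the Fefferman--Stein vector-valued maximal inequality on $L^{\vec{p}}(\rn)$, namely \cite[Theorem 2.7]{zsy16}, valid for $p=\tilde{p}^-$. For Assumption \ref{as1.2}, I would first note that $[L^{\vec{p}}(\rn)]^{1/s_0}=L^{\vec{p}/s_0}(\rn)$ is a BBF space because every component of $\vec{p}/s_0$ is at least $\tilde{p}^-/s_0\ge1$, and then use \cite[Lemma 2.9]{zsy16} to identify
\[
\lf[\lf(\lf[L^{\vec{p}}(\rn)\r]^{1/s_0}\r)'\r]^{1/(2/s_0)'}=L^{(\vec{p}/s_0)'/(2/s_0)'}(\rn),
\]
where, for any $i\in[1,n]\cap\nn$, $\frac{1}{(p_i/s_0)'}+\frac{1}{p_i/s_0}=1$. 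Because $0<\tilde{p}^-\le\tilde{p}^+<2$ forces $p_i<2$ for every $i$, each component $\frac{(p_i/s_0)'}{(2/s_0)'}$ of the exponent vector lies in $(1,\infty)$, so the Hardy--Littlewood maximal operator is bounded on this mixed-norm space by \cite[Lemma 2.5]{zsy16}; this is exactly Assumption \ref{as1.2} with $q_0=2$.

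Having checked all hypotheses, I would simply invoke Theorem \ref{th1.2} to get, for any $0<\gamma\neq1$, $\beta\in[\gamma n(\frac{1}{s_0}-\frac{1}{2}),\infty)$, $r\in(0,1]$, $\tau\in\mathbb{R}$, and $f\in WH^{\vec{p}}_{L}(\rn)$,
\[
\lf\|(I+L)^{-\beta/2}e^{i\tau L^{\gamma/2}}f\r\|_{WH^{\vec{p}}_{L}(\rn)}\le C\lf(1+|\tau|\r)^{n(\frac{1}{s_0}-\frac{r}{2})}\|f\|_{WH^{\vec{p}}_{L}(\rn)},
\]
and then rewrite $\frac{1}{s_0}=\frac{1}{\min\{1,\tilde{p}^-\}}$ to reach the stated conclusion. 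The only point that needs genuine care --- and hence the main obstacle --- is the associate-space computation above together with the check that the resulting exponent vector has all components strictly between $1$ and $\infty$: this is where $\tilde{p}^+<2$ enters crucially, since the boundedness of $\mathcal{M}$ on mixed-norm Lebesgue spaces fails once any component drops to the endpoint $1$, and without it Assumption \ref{as1.2} could not be verified.
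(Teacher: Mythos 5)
Your overall strategy --- reduce Theorem \ref{thms2} to an application of the general estimate Theorem \ref{th1.2} by verifying Assumptions \ref{as1.1} and \ref{as1.2} for $X:=L^{\vec{p}}(\rn)$ --- is exactly the route the paper follows (it explicitly models the proof on that of Theorem \ref{thms1}). The problem lies in your choice of $s_0$.

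You take $s_0:=\min\{1,\tilde{p}^-\}$ and claim that every component $\frac{(p_i/s_0)'}{(2/s_0)'}$ of the associate-space exponent vector lies in $(1,\infty)$. This fails precisely when $\tilde{p}^-\leq1$: then $s_0=\tilde{p}^-$, so for any index $i$ attaining $p_i=\tilde{p}^-$ one has $p_i/s_0=1$, hence $(p_i/s_0)'=\infty$, and that component of $\frac{(\vec{p}/s_0)'}{(2/s_0)'}$ is $\infty$, not finite. In that case the space $\lf[\lf([L^{\vec{p}}(\rn)]^{1/s_0}\r)'\r]^{1/(2/s_0)'}$ has an infinite-exponent component, so \cite[Lemma 2.5]{zsy16} --- which you need for the boundedness of $\mathcal{M}$ and which requires the exponent vector to lie strictly between $1$ and $\infty$ --- cannot be invoked. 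The paper's proof of Theorem \ref{thms1} avoids this by taking $s_0\in(0,\min\{1,\tilde{p}^-\})$ \emph{strictly}, so that $p_i/s_0>1$ for every $i$; your choice works only in the case $\tilde{p}^->1$, where $s_0=1$ gives $p_i/s_0=p_i\in(1,2)$ for all $i$. Note also that the strict choice $s_0<\min\{1,\tilde{p}^-\}$, fed into Theorem \ref{th1.2}, produces the exponent $n(\frac{1}{s_0}-\frac{r}{2})$, which is strictly larger than the stated $n(\frac{1}{\min\{1,\tilde{p}^-\}}-\frac{r}{2})$; so literally matching the power in the statement requires either the endpoint $s_0=\min\{1,\tilde{p}^-\}$ (which, as just explained, breaks the verification of Assumption \ref{as1.2} when $\tilde{p}^-\leq1$) or uniformity in $s_0$ of the implicit constant, neither of which your proposal supplies.
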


\begin{theorem}\label{thms2x}
Let $\vec{p}:=(p_1,\ldots,p_n)\in(0,\fz]^n$
and both $\tilde{p}^-$ and $\tilde{p}^+$ be the same as in \eqref{em.1}, and $L$ is a non-negative self-adjoint operator on $L^2(\rn)$
satisfying the Davies-Gaffney estimate \eqref{e1.4}.
Assume that $0<\tilde{p}^-\leq \tilde{p}^+<2$, $0<\gamma\neq1$,
$\beta\in[\gamma n(\frac{1}{\min\{1,\tilde{p}^-\}}-\frac{1}{2}),\infty)$, and $r\in(0,1]$. Then there exists a positive
constant $C$ such that, for any $\tau\in \mathbb{R}$ and $f\in H^{\vec{p}}_{L}(\rn)$,
\begin{align*}
\lf\|(I+L)^{-\beta/2}e^{i\tau L^{\gamma/2}}f\r\|_{H^{\vec{p}}_{L}(\rn)}\leq C\lf(1+|\tau|\r)^{^{n(\frac{1}{\min\{1,\tilde{p}^-\}}-\frac{r}{2})}}\|f\|_{H^{\vec{p}}_{L}(\rn)}.
\end{align*}
\end{theorem}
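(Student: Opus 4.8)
The plan is to obtain Theorem \ref{thms2x} as a direct application of Corollary \ref{co1.1} to the ball quasi-Banach function space $X(\rn):=L^{\vec{p}}(\rn)$, in exactly the same spirit as the proof of Theorem \ref{thms1}. First I would fix the parameters $p:=\tilde{p}^-$, $q_0:=2$, and $s_0:=\min\{1,\tilde{p}^-\}$. These are admissible for Corollary \ref{co1.1}: one has $s_0\in(0,\min\{p,1\}]$ and $q_0\in(s_0,2]$; moreover the hypothesis $\beta\in[\gamma n(\frac{1}{\min\{1,\tilde{p}^-\}}-\frac12),\infty)$ is precisely $\beta\in[\gamma n(\frac{1}{s_0}-\frac12),\infty)$, and $0<\gamma\neq1$ is assumed, so the hypotheses of Corollary \ref{co1.1} on $\beta$ and $\gamma$ hold.

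Next I would verify that $L^{\vec{p}}(\rn)$, with these exponents, is a BQBF space satisfying Assumptions \ref{as1.1} and \ref{as1.2}. That $L^{\vec{p}}(\rn)$ is a BQBF space is recalled after its definition. For Assumption \ref{as1.1} I would invoke the Fefferman--Stein vector-valued maximal inequality on mixed-norm spaces, \cite[Theorem 2.7]{zsy16}, valid for any $t\in(0,\tilde{p}^-)$ and $u\in(1,\infty)$. For Assumption \ref{as1.2}, using \cite[Lemma 2.9]{zsy16} one identifies
$$\lf[\lf(\lf[L^{\vec{p}}(\rn)\r]^{1/s_0}\r)'\r]^{1/(2/s_0)'}=L^{(\vec{p}/s_0)'/(2/s_0)'}(\rn),$$
where $\frac{1}{(p_i/s_0)'}+\frac{1}{p_i/s_0}=1$ for each $i$; a short computation reduces the required boundedness of the Hardy--Littlewood maximal operator on this space (via \cite[Lemma 2.5]{zsy16}) to the condition $p_i<2$ for every $i$, that is, to $\tilde{p}^+<2$, which is our hypothesis. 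Hence $X^{1/s_0}(\rn)$ is a BBF space and Assumption \ref{as1.2} holds with $q_0=2$.

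With the hypotheses verified, Corollary \ref{co1.1} applied to $X(\mathbb{X}):=L^{\vec{p}}(\rn)$ yields, for all $\tau\in\mathbb{R}$ and $f\in H^{\vec{p}}_L(\rn)$,
$$\lf\|(I+L)^{-\beta/2}e^{i\tau L^{\gamma/2}}f\r\|_{H^{\vec{p}}_L(\rn)}\le C(1+|\tau|)^{n(\frac{1}{s_0}-\frac12)}\|f\|_{H^{\vec{p}}_L(\rn)}.$$
Since $r\in(0,1]$ and $s_0=\min\{1,\tilde{p}^-\}$, we have $n(\frac{1}{s_0}-\frac12)\le n(\frac{1}{\min\{1,\tilde{p}^-\}}-\frac r2)$, so $(1+|\tau|)^{n(\frac{1}{s_0}-\frac12)}\le(1+|\tau|)^{n(\frac{1}{\min\{1,\tilde{p}^-\}}-\frac r2)}$, and the claimed estimate follows.

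I expect the only real subtlety --- the main obstacle --- to be the verification of Assumption \ref{as1.2}: one must correctly compute the iterated associate/convexification space of $L^{\vec{p}}(\rn)$ and recognize that the boundedness of $\mathcal{M}$ there rests exactly on the strict inequality $\tilde{p}^+<2$. Everything else is a routine matching of parameters against Corollary \ref{co1.1}, together with the elementary observation that passing from the exponent $\frac12$ to $\frac r2$ with $r\in(0,1]$ only enlarges the right-hand side.
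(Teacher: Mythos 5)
Your proposal matches the paper's approach exactly: the paper proves Theorem \ref{thms1} by checking that $L^{\vec{p}}(\rn)$ satisfies Assumptions \ref{as1.1} and \ref{as1.2} via \cite[Theorem 2.7, Lemmas 2.5 and 2.9]{zsy16}, and then states that Theorems \ref{thms2}, \ref{thms2x}, \ref{thms3}, \ref{thms3x} follow by the same verification combined with the corresponding abstract result (here Theorem \ref{co1.1}). Your reduction of the maximal-operator hypothesis to $\tilde p^+<2$ and your final elementary observation that passing from $\frac12$ to $\frac r2$ with $r\le1$ only enlarges the right-hand side are both correct.

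One small point worth flagging: you take $s_0=\min\{1,\tilde p^-\}$ exactly, whereas the paper's own verification in the proof of Theorem \ref{thms1} takes $s_0\in(0,\min\{1,\tilde p^-\})$ strictly. Your endpoint choice is in fact what one is forced to use if one wants the boundary case $\beta=\gamma n(\frac{1}{\min\{1,\tilde p^-\}}-\frac12)$ to fall within the hypothesis of Theorem \ref{co1.1}, and it is consistent with the statement of that theorem (which allows $s_0\in(0,\min\{p,1\}]$). But you should be aware that when $\tilde p^-<1$ and $s_0=\tilde p^-$, the space $[(X^{1/s_0})']^{1/(q_0/s_0)'}=L^{(\vec p/s_0)'/(2/s_0)'}$ acquires at least one exponent equal to $\infty$ (coming from the $p_i$ attaining the minimum). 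You correctly observe the remaining exponents are $>1$ precisely when $p_i<2$, but you would need to confirm that the version of the maximal inequality cited in \cite[Lemma 2.5]{zsy16} actually covers exponents equal to $\infty$ in the mixed-norm setting; otherwise you must retreat to $s_0$ slightly less than $\min\{1,\tilde p^-\}$, which then covers only $\beta$ strictly greater than $\gamma n(\frac{1}{\min\{1,\tilde p^-\}}-\frac12)$. This is a latent imprecision in the paper itself rather than an error particular to your argument.
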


\begin{theorem}\label{thms3}
Let $\vec{p}:=(p_1,\ldots,p_n)\in(0,\fz]^n$
and both $\tilde{p}^-$ and $\tilde{p}^+$ be the same as in \eqref{em.1}, and $L$ be a non-negative self-adjoint operator on $L^2(\rn)$
satisfying the Gaussian upper estimate \eqref{e1.7} and \eqref{e1.8}.
Assume that $0<\tilde{p}^-\leq \tilde{p}^+<2$, $\alpha>n(\frac{1}{\min\{1,\tilde{p}^-\}}-\frac{1}{2})$ and $r\in(\frac{n/\min\{1,\tilde{p}^-\}}{\alpha+n/2},1]$. Then there exists a positive constant $C$ such that, for any $\tau\in \mathbb{R}$ and $f\in WH^{\vec{p}}_{L}(\rn)$,
\begin{align}\label{e1.9}
\lf\|L^{i\tau}f\r\|_{WH^{\vec{p}}_{L}(\rn)}\leq C\lf(1+|\tau|\r)^{n(\frac{1}{\min\{1,\tilde{p}^-\}}-\frac{r}{2})}\|f\|_{WH^{\vec{p}}_{L}(\rn)}.
\end{align}
\end{theorem}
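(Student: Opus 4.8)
The plan is to follow exactly the template used for Theorem~\ref{thms1}: realize $L^{\vec{p}}(\rn)$ as a concrete instance of the abstract framework and then quote Theorem~\ref{th1.3}. First I would note that $(\rn,|\cdot|,dx)$ is an Ahlfors $n$-regular metric measure space, with $n$ the Euclidean dimension (which is precisely the number of components of $\vec{p}$ and the $n$ appearing in $L^{\vec{p}}(\rn)$), so \eqref{e1.6} holds; the hypotheses that $L$ is non-negative self-adjoint on $L^2(\rn)$ satisfying the Gaussian upper bound \eqref{e1.7} and the weighted $L^2$ bound \eqref{e1.8} are assumed outright; and, by the remark preceding Theorem~\ref{thms1}, $L^{\vec{p}}(\rn)$ is a BQBF space. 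Hence, once Assumptions~\ref{as1.1} and~\ref{as1.2} are verified for a suitable triple $(p,s_0,q_0)$, the desired estimate follows immediately from Theorem~\ref{th1.3}.

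For the parameters I would take $p:=\tilde{p}^-$, $q_0:=2$, and, crucially, $s_0:=\min\{1,\tilde{p}^-\}$ rather than something strictly smaller: since $\tilde{p}^-<2$ this gives $p\in(0,\fz)$, $s_0\in(0,\min\{p,1\}]$ and $q_0\in(s_0,2]$, and with this choice the two scaling hypotheses of Theorem~\ref{th1.3}, namely $\alpha>n(\frac{1}{s_0}-\frac12)$ and $r\in(\frac{n/s_0}{\alpha+n/2},1]$, are \emph{word for word} the standing hypotheses $\alpha>n(\frac1{\min\{1,\tilde{p}^-\}}-\frac12)$ and $r\in(\frac{n/\min\{1,\tilde{p}^-\}}{\alpha+n/2},1]$ of the present theorem, while the output power $(1+|\tau|)^{n(\frac{1}{s_0}-\frac{r}{2})}$ is exactly the claimed one. (Taking $s_0<\min\{1,\tilde{p}^-\}$ would only produce a strictly weaker estimate, so one really must work at this endpoint value.)

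It then remains to check the two Assumptions for $X(\rn):=L^{\vec{p}}(\rn)$ with these $p,s_0,q_0$. Assumption~\ref{as1.1}, the Fefferman--Stein vector-valued maximal inequality on the convexifications $X^{1/t}(\rn)$ for $t\in(0,p)$, is \cite[Theorem~2.7]{zsy16}. For Assumption~\ref{as1.2}, I would first note $[L^{\vec{p}}(\rn)]^{1/s_0}=L^{\vec{p}/s_0}(\rn)$, a BBF space since every coordinate of $\vec{p}/s_0$ is $\geq 1$; then, by \cite[Lemma~2.9]{zsy16},
\[
\lf[\lf(\lf[L^{\vec{p}}(\rn)\r]^{\frac{1}{s_0}}\r)'\r]^{\frac{1}{(2/s_0)'}}
=L^{(\vec{p}/s_0)'/(2/s_0)'}(\rn),
\]
a mixed-norm space each of whose exponents equals $(p_i/s_0)'/(2/s_0)'$. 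Because $\tilde{p}^+<2$ one has $p_i/s_0<2/s_0$ for every $i$, hence $(p_i/s_0)'>(2/s_0)'$ (conjugation reverses order, with $1'=\fz$), so every one of these exponents lies in $(1,\fz]$; by \cite[Lemma~2.5]{zsy16} the Hardy--Littlewood maximal operator $\mathcal{M}$ is therefore bounded on this space, giving Assumption~\ref{as1.2}.

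The only point that is not purely mechanical is this last verification at the endpoint $s_0=\min\{1,\tilde{p}^-\}$: when $\vec{p}$ has repeated minimal entries (so some $p_i=s_0$), the corresponding coordinate of the iterated associate/convexification exponent degenerates to $\fz$, and one must observe that $\mathcal{M}$-boundedness in such a coordinate is trivial, so the argument goes through unchanged. Once Assumptions~\ref{as1.1} and~\ref{as1.2} are in hand, Theorem~\ref{th1.3} applies verbatim and yields the stated bound; all of the substantive analysis — the atomic/molecular characterization of $WH^{\vec{p}}_{L}(\rn)$ and the functional-calculus kernel estimates of the type \eqref{e5.6}--\eqref{e5.7} — is already contained in that theorem. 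As with Theorem~\ref{thms1}, the remaining details are routine and can be omitted.
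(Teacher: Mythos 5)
Your proposal follows the same overall template as the paper — realize $L^{\vec{p}}(\rn)$ as an instance of the abstract framework, verify Assumptions~\ref{as1.1} and~\ref{as1.2}, and quote Theorem~\ref{th1.3} — but you make a substantive refinement that the paper glosses over. The paper dispatches Theorems~\ref{thms2}--\ref{thms3x} by pointing at the proof of Theorem~\ref{thms1}, and that proof takes $s_0\in(0,\min\{1,\tilde{p}^-\})$ \emph{strictly}. As you correctly observe, for Theorem~\ref{thms1} (an equivalence of spaces) the choice of $s_0$ is irrelevant, but for Theorem~\ref{thms3} the parameter $s_0$ enters both the hypotheses and the growth exponent, and any $s_0<\min\{1,\tilde{p}^-\}$ would yield a strictly larger power of $(1+|\tau|)$ and strictly stronger hypotheses on $\alpha$ and $r$; since the constant produced by Theorem~\ref{th1.3} depends on $s_0$, one cannot simply pass to the limit $s_0\uparrow\min\{1,\tilde{p}^-\}$. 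Taking $s_0=\min\{1,\tilde{p}^-\}$ outright, as you do, is what actually recovers the bound as stated, and your verification that $q_0=2$, $p=\tilde{p}^-$ still satisfy $s_0\in(0,\min\{p,1\}]$ and $q_0\in(s_0,2]$ is correct.

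Two caveats on the verification of Assumption~\ref{as1.2} at this endpoint. First, when $\tilde{p}^-\leq1$ the exponent $(p_i/s_0)'/(2/s_0)'$ degenerates to $\infty$ in every coordinate where $p_i$ attains the minimum, and the assertion that $\mathcal M$ is bounded on the resulting mixed-norm space with some exponents equal to $\infty$ (the others lying strictly in $(1,\infty)$, thanks to $\tilde{p}^+<2$) is plausible but is not a direct consequence of the reference you — following the paper — cite: \cite{zsy16} treats variable-exponent $L^{p(\cdot)}$, not mixed-norm $L^{\vec{q}}$, and the same mismatch already occurs in the paper's proof of Theorem~\ref{thms1}. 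A cleaner route is to invoke a mixed-norm boundedness result for $\mathcal M$ directly (the case of all exponents in $(1,\infty)$ is classical, and the $\infty$ coordinates can be absorbed by the trivial $L^\infty\to L^\infty$ bound for $\mathcal M$ in an iterated-norm argument). Second, you should state explicitly that $[L^{\vec{p}}]^{1/s_0}=L^{\vec{p}/s_0}$ is indeed a BBF space at the endpoint: this needs every $p_i/s_0\geq1$, which holds with equality for the minimizing coordinates, and $L^1$ in those slots still gives a Banach iterated norm. With these points made precise, your argument is correct and is, in fact, a more careful rendering of what the paper's omitted proof would have to do.
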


\begin{theorem}\label{thms3x}
Let $\vec{p}:=(p_1,\ldots,p_n)\in(0,\fz]^n$
and both $\tilde{p}^-$ and $\tilde{p}^+$ be the same as in \eqref{em.1}, and $L$ be a non-negative self-adjoint operator on $L^2(\rn)$
satisfying the Gaussian upper estimate \eqref{e1.7} and \eqref{e1.8}.
Assume that $0<\tilde{p}^-\leq \tilde{p}^+<2$, $\alpha>n(\frac{1}{\min\{1,\tilde{p}^-\}}-\frac{1}{2})$ and $r\in(\frac{n/\min\{1,\tilde{p}^-\}}{\alpha+n/2},1]$. Then there exists a positive constant $C$ such that, for any $\tau\in \mathbb{R}$ and $f\in H^{\vec{p}}_{L}(\rn)$,
\begin{align*}
\lf\|L^{i\tau}f\r\|_{H^{\vec{p}}_{L}(\rn)}\leq C\lf(1+|\tau|\r)^{n(\frac{1}{\min\{1,\tilde{p}^-\}}-\frac{r}{2})}\|f\|_{H^{\vec{p}}_{L}(\rn)}.
\end{align*}
\end{theorem}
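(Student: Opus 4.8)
The plan is to run the argument exactly as in the proof of Theorem~\ref{thms1}, only replacing the concluding appeal to Theorem~\ref{th1.1} by one to Theorem~\ref{co1.4}, which is the strong Hardy space counterpart of Theorem~\ref{th1.3}. Thus I would first fix the structural parameters of the ball quasi-Banach function space $L^{\vec{p}}(\rn)$: take $p:=\tilde{p}^-$, $q_0:=2$, and choose $s_0\in(0,\min\{1,\tilde{p}^-\})$. The choice of $s_0$ is not free: the hypotheses of Theorem~\ref{co1.4} applied with this $s_0$ must remain valid, i.e.\ one needs $\alpha>n(\frac{1}{s_0}-\frac{1}{2})$ and $r\in(\frac{n/s_0}{\alpha+n/2},1]$. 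Since these hold for $\min\{1,\tilde{p}^-\}$ in place of $s_0$ with strict inequalities, a routine continuity argument (letting $s_0\uparrow\min\{1,\tilde{p}^-\}$) produces an admissible $s_0$; this is also the step that pins down the order, namely one takes $M\in\nn$ with $M>\max\{\frac{n}{2}(\frac{1}{s_0}-\frac{1}{2}),\frac{1}{2}(\alpha-\frac{n}{2})\}$, exactly as in the proof of Theorem~\ref{th1.3}.

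Next I would verify, for these parameters, that $L^{\vec{p}}(\rn)$ satisfies Assumptions~\ref{as1.1} and~\ref{as1.2}, reusing the two citations from the proof of Theorem~\ref{thms1}. Assumption~\ref{as1.1}, the vector-valued Fefferman--Stein inequality on the convexifications $[L^{\vec{p}}]^{1/t}$ with $t\in(0,p)$, is \cite[Theorem~2.7]{zsy16}. For Assumption~\ref{as1.2}, I would first record the associate-space identity
\[
\lf[\lf(\lf[L^{\vec{p}}(\rn)\r]^{\frac{1}{s_0}}\r)^{'}\r]^{\frac{1}{(2/s_0)^{'}}}
=L^{\frac{(\vec{p}/s_0)^{'}}{(2/s_0)^{'}}}(\rn),
\]
which follows from \cite[Lemma~2.9]{zsy16}, and then use $0<\tilde{p}^-\le\tilde{p}^+<2$ together with \cite[Lemma~2.5]{zsy16} to conclude that $[L^{\vec{p}}]^{1/s_0}$ is a ball Banach function space and that $\mathcal{M}$ is bounded on the above $\frac{1}{(2/s_0)^{'}}$-convexified associate space. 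With both assumptions in force, Theorem~\ref{co1.4} applies with $X(\rn):=L^{\vec{p}}(\rn)$ and, after letting $s_0\uparrow\min\{1,\tilde{p}^-\}$, delivers the claimed estimate for $\|L^{i\tau}f\|_{H^{\vec{p}}_{L}(\rn)}$.

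I expect all the genuinely analytic content to lie inside the quoted estimate rather than in the mixed-norm reduction, so I would indicate it for completeness. Following the proof of Theorem~\ref{th1.3} (adapted to the strong norm as in \cite[Theorem~4.16]{lyyy24}), one inserts an atomic decomposition $f=\sum_{i\in\zz}\sum_{j\in\nn}\lambda_{i,j}a_{i,j}$ of $H^{\vec{p}}_{L}(\rn)$, splits $I=(I-e^{-r^2_{B_{i,j}}L})^M+P(r^2_{B_{i,j}}L)$, and uses the off-diagonal bounds for $S_{L,\psi}((I-e^{-r^2_{B}L})^ML^{i\tau}a)$ and $S_{L,\psi}(P(r^2_{B}L)L^{i\tau}a)$ on the annuli $U_k(B^\tau)$, $B^\tau:=(1+|\tau|)B$, which are precisely \eqref{e5.6} and \eqref{e5.7} and rest on \eqref{e1.8} and \cite[(3.7)]{bbhh22}. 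The step I expect to be the main obstacle is summing these size estimates directly in $\|\cdot\|_{L^{\vec{p}}(\rn)}$ --- rather than over super-level sets $\{S_{L,\psi}>\alpha\}$, as one does for the weak space in Theorem~\ref{th1.3} --- which is handled by Lemma~\ref{le2.2} with dilation parameter $\theta:=2^k(1+|\tau|)$, producing the $k$-geometric factor $2^{-ks_0[r\alpha-(\frac{1}{s_0}-\frac{r}{2})n]}$ (summable because $\alpha>n(\frac{1}{s_0}-\frac{1}{2})$) and the dilation factor $(1+|\tau|)^{(1-\frac{rs_0}{2})n}$, followed by Remark~\ref{re2.1} and the Fefferman--Stein inequality of Assumption~\ref{as1.1}; this bookkeeping is exactly what outputs the power $(1+|\tau|)^{n(\frac{1}{\min\{1,\tilde{p}^-\}}-\frac{r}{2})}$. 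The proofs of Theorems~\ref{thms2}, \ref{thms2x} and~\ref{thms3} are identical in structure, differing only in whether Theorem~\ref{th1.2}, \ref{co1.1} or~\ref{th1.3} is quoted in the final step.
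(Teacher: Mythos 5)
Your proposal is correct and follows the same route the paper intends: verify Assumptions~\ref{as1.1} and~\ref{as1.2} for $L^{\vec{p}}(\rn)$ exactly as in the proof of Theorem~\ref{thms1} (via \cite[Theorems~2.7, Lemmas~2.5 and~2.9]{zsy16}), then invoke the abstract strong-Hardy-space estimate Theorem~\ref{co1.4}. One small inaccuracy worth flagging: you attribute to Theorem~\ref{co1.4} the hypotheses ``$\alpha>n(\frac{1}{s_0}-\frac{1}{2})$ and $r\in(\frac{n/s_0}{\alpha+n/2},1]$,'' but those belong to the weak-space Theorem~\ref{th1.3}. Theorem~\ref{co1.4} itself only requires $\alpha>\frac n2$ and contains no parameter $r$; its conclusion carries the exponent $n(\frac{1}{s_0}-\frac12)$, which (since $r\le 1$) is stronger than the $n(\frac{1}{\min\{1,\tilde p^-\}}-\frac r2)$ claimed in Theorem~\ref{thms3x} once $s_0$ is taken close enough to $\min\{1,\tilde p^-\}$. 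This slip is harmless and arguably inherited from the way the paper states the application theorem; the overall argument and the choice of abstract input are correct.
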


The proofs of Theorems \ref{thms2} \ref{thms2x}, \ref{thms3} and \ref{thms3x} are similar to that of Theorem \ref{thms1}, thus we omit the details here.

%\begin{remark}
% We should point out that, when it comes to the Euclidean spaces setting $(\mathbb{X}:=\rn)$, Theorems \ref{thms2} \ref{thms2x}, \ref{thms3} and \ref{thms3x} are also
%new.
%\end{remark}

\smallskip

{\bf Acknowledgements.}
X. Liu is supported by
%the Youth Science Research Fund of Lanzhou
%Jiaotong University (Grant No. 2023022),
the Gansu Province Education Science and Technology Innovation Project (Grant No. 224040), and
the Foundation for Innovative Fundamental Research Group Project of Gansu Province (Grant No. 25JRRA805).
W. Wang is supporeted the China Postdoctoral Science Foundation (No. 2024M754159), and the  Postdoctoral Fellowship Program of
CPSF (No. GZB20230961).
The authors would like to thank Prof. Dachun Yang and Prof. Sibei Yang for their valuable suggestions and insightful comments, which greatly contributed to the improvement of this paper.
%The authors would like to thank both referees for their very
%careful reading and many valuable comments which indeed improve the presentation of this article.

%%%%%%%%%%%%%%%%%%%%%%%%%%%%%%%%%%%%%%%%%%%%%%%%%%%%%%%%%%%%%%%%%%%%%%%

%%%%%%%%%%%%%%%%%%%%%%%References %%%%%%%%%%%%%%%%%%%%%%%%%%%%%%%%%%%%%

%%%%%%%%%%%%%%%%%%%%%%%%%%%%%%%%%%%%%%%%%%%%%%%%%%%%%%%%%%%%%%%%%%%%%%

\bigskip

\noindent Xiong Liu: School of Mathematics and Physics,
Gansu Center for Fundamental Research in Complex Systems Analysis and Control,
Lanzhou Jiaotong University, Lanzhou 730070, P. R. China

\medskip

\noindent
Wenhua Wang (Corresponding author):
 Institute for Advanced Study in Mathematics, Harbin Institute of Technology, Harbin 150001,
P. R. China
%School of Mathematics and Statistics,
%Lanzhou University, Lanzhou 730000, P.R. China

\medskip

\smallskip

\noindent{E-mails}:\\
\texttt{liuxmath@126.com} (Xiong Liu)  \\
\texttt{whwangmath@whu.edu.cn} (Wenhua Wang)
\bigskip \medskip

\end{document}